\def\Yint#1{\mathchoice
    {\YYint\displaystyle\textstyle{#1}}%
    {\YYint\textstyle\scriptstyle{#1}}%
    {\YYint\scriptstyle\scriptscriptstyle{#1}}%
    {\YYint\scriptscriptstyle\scriptscriptstyle{#1}}%
      \!\iint}
\def\YYint#1#2#3{{\setbox0=\hbox{$#1{#2#3}{\iint}$}
    \vcenter{\hbox{$#2#3$}}\kern-.51\wd0}}
\def\longdash{{-}\mkern-3.5mu{-}} 
\def\tiltlongdash{\rotatebox[origin=c]{15}{$\longdash$}}
\def\fiint{\Yint\tiltlongdash}
\theoremstyle{plain}
\newtheorem{theorem}[equation]{Theorem}
\newtheorem{lemma}[equation]{Lemma}
\newtheorem{corollary}[equation]{Corollary}
\theoremstyle{definition}
\newtheorem{definition}[equation]{Definition}
\theoremstyle{remark}
\newtheorem{remark}[equation]{Remark}
\newtheorem{claim}[equation]{Claim}
\numberwithin{equation}{section}
\DeclareMathOperator{\Id}{Id}
\newcommand{\RR}{{\mathbb{R}}}
\newcommand{\pO}{\partial\Omega}
\newcommand{\re}{\mathbb{R}}
\newcommand{\rn}{\mathbb{R}^n}
\newcommand{\ree}{\mathbb{R}^{n+1}}
\newcommand{\om}{\Omega}
\def\XXint#1#2#3{{\setbox0=\hbox{$#1{#2#3}{\int}$ }
\vcenter{\hbox{$#2#3$ }}\kern-.6\wd0}}
\newcommand{\hm}{\omega}
\newcommand{\RNum}[1]{\uppercase\expandafter{\romannumeral #1\relax}}
\def\Yint#1{\mathchoice
    {\YYint\displaystyle\textstyle{#1}}%
    {\YYint\textstyle\scriptstyle{#1}}%
    {\YYint\scriptstyle\scriptscriptstyle{#1}}%
    {\YYint\scriptscriptstyle\scriptscriptstyle{#1}}%
      \!\iint}
\def\YYint#1#2#3{{\setbox0=\hbox{$#1{#2#3}{\iint}$}
    \vcenter{\hbox{$#2#3$}}\kern-.51\wd0}}
\def\longdash{{-}\mkern-3.5mu{--}} 
\def\tiltlongdash{\rotatebox[origin=c]{15}{$\longdash$}}
\def\fiint{\Yint\tiltlongdash}
\renewcommand{\emptyset}{\mbox{\textup{\O}}}
\DeclareMathOperator{\supp}{supp}
\DeclareMathOperator{\divg}{div}
\def\div{\mathop{\operatorname{div}}\nolimits}
\begin{document}
\allowdisplaybreaks

\author{S. Bortz}
\address{Department of Mathematics, University of Alabama, Tuscaloosa, AL, 35487, USA}
	\email{sbortz@ua.edu}

\author{T. Toro}
\address{Department of Mathematics, University of Washington, Seattle, WA
98195, USA}
\email{toro@uw.edu}

\author{Z. Zhao}
\address{Department of Mathematics, University of Chicago, Chicago, IL 60637, USA}
\email{zhaozh@uchicago.edu}

\title[Elliptic measures for DKP operators]{Elliptic measures for Dahlberg-Kenig-Pipher operators: Asymptotically optimal estimates}

\begin{abstract}
Questions concerning quantitative and asymptotic properties of the elliptic measure corresponding to a uniformly elliptic divergence form operator have been the focus of recent studies.
In this setting we show that the elliptic measure of an operator with coefficients satisfying a vanishing Carleson condition in the upper half space is an asymptotically optimal $A_\infty$ weight. In particular, for such operators the logarithm of the elliptic kernel is in the space of (locally) vanishing mean oscillation. 

To achieve this, we prove local, quantitative estimates on a quantity (introduced by Fefferman, Kenig and Pipher) that controls the $A_\infty$ constant. Our work uses recent results obtained by David, Li and Mayboroda. These quantitative estimates may offer a new framework to approach similar problems.
\end{abstract}

\thanks{T.T. was partially supported by the Craig McKibben \& Sarah Merner Professor in Mathematics, and by NSF grant DMS-1954545. Z.Z. was partially supported by NSF grant DMS-1902756.}
\subjclass[2010]{35J25, 42B37, 31B35.}

\maketitle
\tableofcontents

\section{Introduction}
In this article we investigate the qualitative and quantitative properties of elliptic measures $\hm_L$ associated to divergence form uniformly elliptic operators $L = -\div A \nabla$ with certain variable coefficients in the half space $\mathbb{R}^{n+1}$. The coefficients satisfy the so-called weak Dahlberg-Kenig-Pipher (DKP) condition, which is a Carleson measure condition on the $L^2$-oscillation of the coefficients on Whitney regions (see Definition \ref{DKPconditions.def}). A very closely related condition was introduced by Dahlberg. 
It shown to be sufficient for $L^p$ solvability of the Dirichlet problem for some $p >1$ by Kenig and Pipher in \cite{KP} (see also \cite{HL}). Subsequently, it was shown by Dindos, Petermichl and Pipher \cite{DPP} that sufficient smallness in a similar\footnote{See Section \ref{kpdpp.sect}.} DKP-type condition allows one to solve the $L^p$-Dirichlet problem for $p >1$ close to $1$ (the `smallness' depends on $p$).

We are particularly interested in the endpoint case when the Carleson norm defining the DKP condition has `vanishing trace'. Under this assumption we study the $BMO$ norm of the logarithm of the elliptic kernel, $k_L$ at small scales (here $k_L = \tfrac{d\hm_L}{dx}$). For any non-negative locally integrable function $w$ if $\log w$ has small norm $BMO$ norm then, roughly speaking, $w$ is `almost' an `optimal' $A_\infty$ weight. The connection between the space of BMO (or $VMO$) and $A_\infty$ (Muckenhoupt) weights is well documented \cite{GCRDF, Sarason, Kor, Kor2} as is the connection between the $A_\infty$ condition for the elliptic measure and the solvability of an $L^p$-Dirichlet problem \cite{HofPLe}. 

The key new ingredient in this work is the quantitative control we obtain on a Carleson measure $\nu$ built from the elliptic measure, defined by
\begin{equation}\label{nuintro.eq}
d\nu(x,r) = \frac{|\hm_L \ast (\nabla \varphi)_r(x)|^2}{|\hm_L \ast \varphi_r(x)|^2}  \, \frac{dx \, dr}{r},
\end{equation}
where $\varphi_r$ is a standard approximation of the identity. The control on $\nu$ is in the form of a point-wise density bound, which implies that $\nu$ is a Carleson measure, see \eqref{ptwisedensitybd1.eq} and Theorem \ref{mainquantest.thrm}. The kind of measure in \eqref{nuintro.eq} was introduced by Fefferman, Kenig and Pipher \cite{FKP} where they showed that a doubling weight $w$ is in the Muckenhoupt $A_\infty$ class if and only if $\nu_w$ is a Carleson measure, where $\nu_w$ is the measure formed by replacing $\hm_L$ by $w$ in \eqref{nuintro.eq}. Later, Korey \cite{Kor} investigated the case where the $\nu_w$ was a `vanishing' Carleson measure. Using this work of Korey we are able to show the following.
\begin{theorem}\label{mainvmo.thrm}
Let $L = -\div A\nabla$ be a divergence form uniformly elliptic operator on $\ree_+$, whose coefficient matrix $A$ satisfies the vanishing weak DKP condition (see Definition \ref{DKPconditions.def}) . If $k^\infty_L$ is the elliptic kernel associated to $L$ (in $\ree_+$) with pole at infinity then $\log k_L^\infty \in VMO(\rn)$. Moreover, if $k^X_L$ is the elliptic kernel associated to $L$ with pole at $X \in \ree_+$ then $\log k^X_L \in VMO_{loc}(\rn)$. Here $VMO$ is the space of vanishing mean oscillation and $VMO_{loc}$ is a local version of $VMO$ (see Definition \ref{BVMO.def}).
\end{theorem}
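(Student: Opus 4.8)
The plan is to deduce everything from the quantitative Carleson estimate on the measure $\nu$ defined in \eqref{nuintro.eq} (Theorem \ref{mainquantest.thrm}), together with Korey's characterization of weights whose associated Fefferman--Kenig--Pipher measure is a vanishing Carleson measure. First I would treat the pole-at-infinity case, which is cleaner because the elliptic kernel $k_L^\infty$ is globally defined and the elliptic measure $\hm_L^\infty$ is (after normalization) a doubling measure on all of $\rn$ comparable to $k_L^\infty\,dx$. The vanishing weak DKP condition on $A$ feeds into Theorem \ref{mainquantest.thrm} to give the point-wise density bound \eqref{ptwisedensitybd1.eq}: for balls $B(x_0,R)$ with $R$ small, $\nu(B(x_0,R)\times(0,R)) / R^n \to 0$ as $R\to 0$, locally uniformly in $x_0$. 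In other words, $\nu_{w}$ with $w = k_L^\infty$ is a \emph{vanishing} Carleson measure in the sense of Korey. By Korey's theorem \cite{Kor} (the vanishing analogue of the Fefferman--Kenig--Pipher $A_\infty$ characterization), a doubling weight $w$ for which $\nu_w$ is a vanishing Carleson measure satisfies $\log w \in VMO(\rn)$. Applying this with $w = k_L^\infty$ gives $\log k_L^\infty \in VMO(\rn)$, which is the first assertion.

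For the pole-at-$X$ case the only new point is localization: the kernel $k_L^X$ is no longer globally a nice $A_\infty$ weight near $X$ (there is a singularity/decay there), and the elliptic measure with finite pole is only doubling away from the pole and on bounded regions. So I would fix a boundary ball $\Delta_0 = B(x_0,r_0)\cap\rn$ at a definite distance from the projection of $X$, and work on $2\Delta_0$. On such a region, the change-of-pole formula (comparing $k_L^X$ with $k_L^{X_0}$ for a corkscrew point $X_0$ associated to $\Delta_0$, and ultimately with $k_L^\infty$) shows that $k_L^X$ is comparable on $2\Delta_0$ to a genuine $A_\infty$ weight, and the corresponding FKP measure restricted to the Carleson box over $\Delta_0$ is again a vanishing Carleson measure by the same density bound \eqref{ptwisedensitybd1.eq}. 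Then Korey's result, applied locally, yields $\log k_L^X \in VMO(\Delta_0)$; since $\Delta_0$ was an arbitrary ball away from the pole, and the local nature of $VMO_{loc}$ (Definition \ref{BVMO.def}) only requires vanishing mean oscillation on bounded sets, we conclude $\log k_L^X \in VMO_{loc}(\rn)$.

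The main obstacle I anticipate is not the soft functional-analytic passage through Korey's theorem but rather making sure the hypotheses of that theorem are genuinely met — concretely, that \eqref{ptwisedensitybd1.eq} delivers the correct \emph{uniform-in-center, vanishing-as-scale} quantifier that Korey requires, and that the weight $w=k_L^\infty$ (resp.\ $k_L^X$ locally) is doubling with the right quantitative control. The doubling is standard for DKP operators via the comparison principle and boundary Harnack, but one must be careful that the vanishing Carleson norm is compatible with Korey's formulation (he works with the oscillation of $\log w$ over cubes, whereas $\nu$ is phrased via a smooth approximate identity $\varphi_r$; reconciling the two requires a routine but nontrivial Littlewood--Paley/Poincaré comparison, likely already recorded in \cite{FKP} or \cite{Kor}). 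A secondary technical point in the finite-pole case is verifying that the change-of-pole estimates do not themselves introduce any non-vanishing Carleson mass — but since changing the pole multiplies $k_L$ by a Hölder-continuous, non-degenerate factor on $2\Delta_0$, its logarithm is continuous hence in $VMO$, and $VMO$ is closed under addition, so this contributes nothing to the mean oscillation at small scales.
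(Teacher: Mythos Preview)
Your plan for the pole-at-infinity case is exactly the paper's argument: Theorem \ref{mainquantest.thrm} shows the FKP measure $\nu$ for $\hm_L^\infty$ is a vanishing Carleson measure, the global doubling of $\hm_L^\infty$ plus Theorem \ref{FKPthrm.thrm} gives $A_\infty$, and then Korey's characterization (Theorem \ref{Koreyequiv.thrm}, (4)$\Rightarrow$(5)) yields $\log k_L^\infty\in VMO(\rn)$.

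For the finite-pole case your proposal and the paper diverge in emphasis. You propose as the primary route to bound the FKP measure of $k_L^X$ directly on a local box (via the finite-pole case of Theorem \ref{mainquantest.thrm}) and then invoke a \emph{local} Korey-type statement. The paper does not do this; it never applies Korey's FKP-Carleson characterization to $k_L^X$. Instead it promotes what you call a ``secondary technical point'' to the main argument: it writes $k^{X_0}=H_\infty(X_0,\cdot)\,k^\infty$ with $H_\infty$ locally H\"older continuous and bounded above and below (Lemma \ref{infkernfnlem.lem}), and then transfers the $VMO$ information through the \emph{reverse H\"older} characterization of $VMO_{loc}$ (Theorem \ref{VMOlocKorey.thrm}), showing the $L^2/L^1$ ratio of $k^{X_0}$ on small balls tends to $1$. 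Your route would require a localized version of the implication ``vanishing FKP Carleson measure $\Rightarrow \log w\in VMO$'' that the paper does not record; the paper only localizes the reverse H\"older characterization, which is why it takes the factorization route.

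Two small corrections. First, there is no need to restrict to balls $\Delta_0$ ``at a definite distance from the projection of $X$'': the pole $X=(x_0,t_0)$ lies in the interior, so $k_L^X$ has no boundary singularity, and for $VMO_{loc}$ you must cover \emph{every} compact subset of $\rn$, including those containing $x_0$. Second, the issue with $k_L^X$ globally is not a singularity near $X$ but decay at infinity (it integrates to $1$), which is why one only gets $VMO_{loc}$ rather than $VMO$.
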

\begin{remark}
	By a simple change of variable argument, we can show the same conclusion holds when we replace $\ree_+$ by a $C^1$-square Dini domain. We say a domain $\Omega$ is $C^1$-square Dini if locally $\Omega$ is the region above the graph of a $C^1$ function $\varphi: \rn \to \RR$, which satisfies that
	\begin{equation}\label{eq:moc}
		|\nabla \varphi(x) - \nabla \varphi(y)| \leq \theta(|x-y|), \quad \text{ for any } x, y \in \RR^n, 
	\end{equation} 
	and
	\begin{equation}\label{def:squareDini}
		\int_0^* \theta(r)^2 \frac{dr}{r} < +\infty. 
	\end{equation} 
	We defer the proof of this remark to the Appendix.
\end{remark}

Theorem \ref{KPalt.thrm}, which constitutes a `large constant' version of Theorem \ref{mainvmo.thrm}, is also new. In particular, in \cite{DPP} the authors use a slightly stronger assumption on the $L^\infty$-oscillation of the coefficient matrix, and in \cite{KP} an even stronger condition on the gradient (see the DKP condition in Definition \eqref{DKPconditions.def}) is imposed. 
The proof of the `small constant' version of Theorem \ref{mainvmo.thrm} requires revisiting the work of Fefferman, Kenig and Pipher \cite{FKP}, see \cite{BES} and Section \ref{kpdpp.sect}.

The advantage of using the measure $\nu$ as in \eqref{nuintro.eq} is that it allows us to use the `Riesz formula' to shift the analysis from the elliptic measure (on the boundary) to the Green function (in the domain). For some time this approach seemed promising to the authors, but the necessary tools to complete the argument were lacking. 
The recent work of David, Li and Mayboroda \cite{DLM} provides the missing tools. In \cite{DLM}, the authors show that the gradient of the Green function is almost purely in the transversal direction in terms of a Carleson measure (see Theorem \ref{DLMeng.thrm}). They also prove a `Hardy inequality'-type lemma for weak DKP coefficients (see Lemma \ref{gammabdalpha.lem}). The estimates in \cite{DLM} in conjuction with the aforementioned point-wise density estimate \eqref{ptwisedensitybd1.eq} are used to prove Theorem \ref{mainquantest.thrm}. To the best of our knowledge this is the first time the measure defined in \eqref{nuintro.eq} has been used in this way. In \cite{FKP}, these Carleson measures were used to produce counterexamples (see Section 4 therein), not to prove that the elliptic measure was an $A_\infty$ weight.

To put our result in context it should be noted that the closely related work of \cite{DPP} places the elliptic kernel $k_L$ in the (local) reverse H\"older class $RH_p$ for all $p > 1$, when $L$ satisfies a condition slightly stronger than in Theorem \ref{mainvmo.thrm}. As $RH_p$ is a stronger condition for larger $p$ and $k_L \in RH_p$ implies $k_L\, dx \in A_\infty$ one might be led to believe that our work could be deduced from this fact, under this slightly stronger hypothesis. This is not the case, as there are weights $w = f \, dx$ that are in every reverse H\"older class that fail to have the property that $\log f \in VMO(\rn)$ (or $f \in VMO_{loc}(\rn)$). (An equivalent way to phrase this is $A_{\infty, as} \subsetneq \bigcap_{p>1} RH_p$, see the characterizations of `asymptotic $A_\infty$' in Theorem \ref{Koreyequiv.thrm} below.) As an example one can take $f(x) = 1_H(x) + (1+\epsilon)1_{H^c}(x)$ for any half space $H$ and $\epsilon > 0$. On the other hand, the role of the $RH_p$ condition is known, even in very rough settings, to be equivalent to the solvability of the $L^{p'}$ Dirichlet problem where $p' = p/(p-1)$ is the dual exponent (see \cite[Proposition 4.5]{HofPLe}). The condition that $\log k_L$ has small BMO norm implies that $k \in RH_p$ for large $p$ and hence the $L^p$-Dirichlet problem is solvable for a wider range of $p$; however, by the example above, the converse is not true. For this reason, $\log k_L \in VMO(\rn)$ has been considered an `asymptotically optimal' condition for the elliptic kernel. 

The condition $\log k_L \in VMO$ has appeared in many works and we give a few important examples here. For the Poisson kernel ($L = -\Delta$), the condition $\log k \in VMO$ was shown by Jerison and Kenig \cite{JK-VMO} for $C^1$ domains, and Kenig and Toro \cite{KT-Duke} under `vanishing flatness' condition for the domains. This is the natural endpoint to the work of Alt and Caffarelli, and Jerison \cite{AC, Jerison-Calpha}. For more general elliptic operators, it was shown by Escauriaza \cite{Esc} in Lipschitz domains and Milakis, Pipher and Toro \cite{MPT} in chord arc domains that the property that $\log k \in VMO$ is stable under `vanishing perturbations' of the coefficients, measured by Carleson measure as in \cite{FKP}. In \cite{BTZ} we
showed that $\log k_L \in VMO$ when $A$ is H\"older continuous and the corresponding operator $L$ is defined in a vanishing chord arc domain. We view these operators as a perturbations of constant coefficient ones. All of the results mentioned above where $\log k_L \in VMO$ are of a perturbative nature. This is part of what made Theorem \ref{mainvmo.thrm} somewhat elusive, as the conditions on the matrix $A$ make it difficult to view it as a `suitable' perturbation of a good operator at all scales. On the other hand, we point out David, Li and Mayboroda \cite{DLM} are able to obtain their estimates by using a perturbative regime at each scale, and we use their results in our work. 

The paper is organized as follows. In Section \ref{prelim.sect} we lay out the setting, notation and the analysis tools used throughout the paper. In Section \ref{PDEDLM.sect} we describe the classical PDE tools used throughout as well as the main result of \cite{DLM}. In Section \ref{quantest.sect} we prove Theorem \ref{mainquantest.thrm}, the foundation of our work. In Section \ref{mtproof.sect} we prove Theorems \ref{mainvmopt1.thrm} and \ref{mainvmopt2.thrm}, which combine to give Theorem \ref{mainvmo.thrm}. In Section \ref{kpdpp.sect} we contrast our work with \cite{KP} and \cite{DPP}.

{\bf Acknowledgement}. We would like to thank Linhan Li for some helpful comments on an earlier version of this work.

\section{Preliminaries and Notation}\label{prelim.sect}
Throughout $n \in \mathbb{N}$, $n \ge 2$ is a fixed constant. We work in $\ree_+:= \{(x,t) \in \rn \times \re: t > 0\}$. We use capital letters $X,Y, Z$ to denote points in $\ree$ and lowercase letters $x,y,z$ to refer to points in $\rn \times \{0\}$ (often identified with $\rn$). For two positive numbers, $a$ and $b$, we write $a \lesssim b$ whenever there exists a constant $C \ge 0$ such that $a/b < C$ such that $C$ depends only on the allowed structural constants in the statement of a definition, lemma, theorem, etc.. Similarly we write $a \approx b$ if there exists $C \ge 1$ such that $C^{-1} \le a/b \le C $.

The operators and matrices we work with satisfy an ellipticity condition.
\begin{definition}[Elliptic Matrices and Operators]
Fix $\Lambda \ge 1$. We say a matrix-valued function $A: \ree_+ \to M_{n+1}(\re)$ is $\Lambda$-elliptic if $\|A\|_{L^\infty(\ree)} \le \Lambda$ and
\[\langle A(X) \xi, \xi \rangle \ge \Lambda^{-1} |\xi|^2, \quad \forall \xi \in \ree, X \in \ree_+.\]
We say $A$ is elliptic if it is $\Lambda$-elliptic for some $\Lambda \ge 1$. The smallest constant $\Lambda \ge 1$ such that $A$ is $\Lambda$-elliptic is called the ellipticity constant of $A$.
We say $L$ is a divergence form elliptic operator (on $\ree_+$) if $L = -\div A \nabla$ (viewed in the weak sense) for an elliptic matrix $A$. In particular, $\Omega \subseteq \ree_+$ open we say $u \in W^{1,2}_{loc}(\om)$ is a weak solution to $Lu = 0$ in $\om$ if 
\[\iint A \nabla u \cdot \nabla F \, dX = 0, \quad \forall F \in C_c^\infty(\om).\]
\end{definition}

\begin{remark}
This definition of ellipticity is the most common in the literature; however we could just as well replace $A$ by $\widetilde{A} = A/\|A\|_{L^\infty(\ree)}$ and the theorems would only depend on the `lower ellipticity' of $\widetilde{A}$. This way there is no `artificial' dependence on ellipticity that is introduced when $A$ is multiplied by a constant (a function $u$ is a solution to $-\div A \nabla u = 0$ if and only if it is a solution to $-\div cA \nabla u = 0$).
\end{remark}

This work concerns a family of canonical measures associated to divergence form elliptic operators. These measures are known (together) as elliptic measure.

\begin{definition}[Elliptic measure and the Green function]\label{def:Green}
Let $L = -\div A \nabla$ be a divergence form elliptic operator on $\ree_+$. There exists a family of Borel measures on $\rn$, $\{\hm^X_L\}_{X \in \ree_+}$, such that for $f \in C_c^\infty(\rn)$ the function
\[u(X) = \int_{\rn} f(y) \, d\hm_L^X(y)\]
is the unique weak solution to the Dirichlet problem
\[(D)_L \begin{cases}
Lu = 0 \in \ree, \\
u|_{\rn} = f
\end{cases}\]
satisfying $u \in C(\overline{\ree_+} \cup \{\infty\})$. In particular $u(X) \to 0$ as $|X| \to \infty$ in $\ree_+$. We call the measure $\hm^X_L$ the elliptic measure with pole at $X$.

By \cite[Lemma 2.25]{HMT}, there is a Green function associated to $L$ in $\ree_+$, $G_L(X,Y): \ree_+ \times \ree_+ \setminus  \text{diag}(\ree_+) \to \re$, which satisfies the following. For fixed $X \in \ree$ the Green function can be extended, as a function in $Y$, to a function that vanishes continuously on the boundary $\rn$. The following `Riesz formula' holds and connects the elliptic measure and the Green function: If $f \in C_c^\infty(\rn)$ and $F \in C_c^\infty(\ree)$ are such that $F(y,0) = f(y)$ then
\begin{equation}\label{Rieszform.eq}
\int_{\rn} f(y) \, d\hm_L^X(y) - F(X) = - \iint A^T(y,s)\nabla_{y,s}G_L(X,(y,s)) \cdot \nabla_{y,s} F(y,s) \, dy \, ds.
\end{equation}
Here, and in the sequel, $A^T$ is the transpose of $A$. In our applications of \eqref{Rieszform.eq}, $F(X)$ will be equal to zero. We emphasize that \eqref{Rieszform.eq} also implies $u(\cdot) = G_L(X, \cdot)$ is a solution to $L^T u = 0$ in $\ree_+ \setminus \{X\}$ and we have remarked that $u$ vanishes continuously on $\rn$.
\end{definition}

In order to define the (weak) DKP condition we need some more notation. 
\begin{itemize}
\item We write $|E|$ for the Lebesgue measure of a set $E$. 
\item We define the integral averages $\fint_{E'} f \, dx = \tfrac{1}{|E'|} \int_{E'} f \, dx$ and $\fiint_{E} F \, dX = \tfrac{1}{|E|} \iint_{E}F \, dX$, for $E' \subset \rn$ and $E \subset \ree$ are sets of positive and finite measure (here $|\cdot|$ is the Lebesgue measure in the appropriate dimension).
\item For $x \in \rn$ and $r > 0$ we define $\Delta(x,r):= \{y \in \rn: |x- y| < r\}$, as usual we naturally identify $\Delta(x,r)$ as a subset of $\rn \times \{0\}$. When we make this identification we call $\Delta(x,r): = \{(y,0) \in \rn: |x- y| < r\}$ a surface ball.
\item Given $x \in \rn$ and $r > 0$ we define the Whitney region 
\[W(x,r) := \Delta(x,r) \times (r/2,r].\]
\item For $X \in \ree$ and $r > 0$ we let $B(X,r)$ denote the usual $n+1$ Euclidean ball.
\item For $\Delta = \Delta(x_0, r_0)$ or $B = B(x_1, r_1)$ we use the notation $r(\Delta) = r_0$ and $r(B) = r_1$ to denote the radius.
\item For $x \in \rn$ (identified with $\rn \times \{0\}$ and $r> 0$ we define the Carleson region
\[T(x,r) := B(x,r) \cap \ree_+.\]
\item If $\Delta = \Delta(x,r)$ we set $T_\Delta = T(x,r)$.
\item For $\Lambda \ge 1$, we let $\mathfrak{A}(\Lambda)$ denote the collection of all {\it constant} $\Lambda$-elliptic matrices.
\end{itemize}

\begin{definition}[Oscillation Coefficients]\label{OSCcoef.def}
Let $A$ be a $\Lambda$-elliptic matrix-value function on $\ree$. We define the following coefficients which measure the oscillation of $A$ on various regions. For $x \in \rn$ and $r > 0$ we define:
\[\alpha_2(x,r) = \inf_{A_0 \in \mathfrak{A}(\Lambda)}\left(\fiint_{(y,s) \in W(x,r)} |A(y,s) - A_0|^2 \right)^{1/2},\]
and
\[\gamma(x,r) = \inf_{A_0 \in \mathfrak{A}(\Lambda)}\left(\fiint_{(y,s) \in T(x,r)} |A(y,s) - A_0|^2 \right)^{1/2}.\]
If, in addition, $A$ is locally Lipschitz, we define for $x \in \rn$ and $r > 0$
\[\tilde{\alpha}(x,r) = r \sup_{(y,s) \in W(x,r)} |\nabla A(y,s)|.\]
It holds that $\alpha_2(x,r) \lesssim \tilde{\alpha}(x,r)$ and $\alpha_2(x,r) \le 2\gamma(x,r)$.
\end{definition}

We need one more definition before we introduce the class of coefficients we work with.
\begin{definition}[Carleson Measures]\label{def:Carl}
Let $\mu$ be a Borel measure on $\ree_+$. We say $\mu$ Carleson measure if
\[\|\mu\|_{\mathcal{C}}: = \sup_{\Delta} |\Delta|^{-1} \mu(T_\Delta) < \infty,\]
where the supremum is over all $n$-dimensional balls $\Delta$ in $\rn$.
Roughly speaking, this means that $\mu$ acts like an $n$-dimensional measure at the boundary. We call $\|\mu\|_{\mathcal{C}}$ the Carleson norm of $\mu$. We also define a localized Carleson norm. For $\Delta_0$ a surface ball and $\nu$ a Borel measure on $T_{\Delta_0}$, we define
\[\|\nu\|_{\mathcal{C}(\Delta_0)} : = \sup_{\Delta \subset \Delta_0} |\Delta|^{-1} \mu(T_\Delta)\]
and if $\|\nu\|_{\mathcal{C}(\Delta_0)}  < \infty$ we say $\nu$ is a Carleson measure on $T_{\Delta_0}$.

We say $\mu$ is a Carleson measure with vanishing trace (or simply vanishing Carleson measure) if $\mu$ is a Carleson measure and
\begin{equation}\label{def:vCarl}
	\lim_{r_0 \to 0^+} (\sup_{x_0 \in \rn} \|\mu\|_{\mathcal{C}(\Delta(x_0,r_0))}) = 0.
\end{equation} 
\end{definition}

\begin{definition}[DKP, weak DKP and vanishing weak DKP conditions]\label{DKPconditions.def}
Let $A$ be a $\Lambda$-elliptic matrix-valued function defined on $\ree_+$. 
\begin{itemize}
\item We say $A$ satisfies the DKP condition if $A$ is locally Lipschitz and $\mu$ defined by 
\[d\mu(x,r) = \tilde{\alpha}(x,r)^2 \, \frac{dx \, dr}{r}\]
is a Carleson measure.
\item We say $A$ satisfies the weak DKP condition if $\mu$ defined by 
\[d\mu(x,r) = \alpha_2(x,r)^2 \, \frac{dx \, dr}{r}\]
is a Carleson measure.
\item We say $A$ satisfies the vanishing weak DKP condition if $\mu$ defined by 
\[d\mu(x,r) = \alpha_2(x,r)^2 \, \frac{dx \, dr}{r}\]
is a Carleson measure with vanishing trace.
\item  We say $A$ satisfies the weak-DKP condition on $T_{\Delta_0}$ if $\mu$ defined by 
\[d\mu(x,r) = \alpha_2(x,r)^2 \, \frac{dx \, dr}{r}\]
is a Carleson measure on $T_{\Delta_0}$.
\end{itemize}
As observed above $\alpha_2(x,r) \lesssim \tilde{\alpha}(x,r)$ so that the DKP condition implies the weak DKP condition. 
For this reason, we will only work with the weak DKP condition in the sequel.
\end{definition}

\begin{remark}\label{AvsAt.rmk}
 The quantities $\alpha_2(x,r)$,  $\tilde{\alpha}(x,r)$ and $\gamma(x,r)$ do not see the difference between $A$ and $A^T$. In particular, any Carleson condition involving $\alpha_2(x,r)$,  $\tilde{\alpha}(x,r)$ and $\gamma(x,r)$ (like those in Definition \ref{DKPconditions.def}) holds for $A$ if and only if it holds for $A^T$. This will be particularly important as we intend to apply the estimates from \cite{DLM} to Green functions, $G(X, \cdot)$ which are solutions to $L^T$ away from $X$.
\end{remark}

Though $\alpha_2(x,r) \leq 2\gamma(x,r)$, $\gamma(x,r)$ is in general not bounded pointwise by $\alpha_2(x,r)$. However, their Carleson measures are essentially equivalent:
%
\begin{lemma}[{\cite[Remark 4.22]{DLM}}]\label{gammabdalpha.lem}
Suppose that $A$ is a $\Lambda$-elliptic matrix-valued function defined on $\mathbb{R}^{n+1}_+$. Then
\[\left\|\gamma(x,r)^2 \, \frac{dx \, dr}{r} \right\|_{\mathcal{C}(\Delta_0)} \le C \left\|\alpha_2(x,r)^2 \, \frac{dx \, dr}{r} \right\|_{\mathcal{C}(3\Delta_0)},\]
and
\[\gamma(x,r)^2 \le C \left\|\alpha_2(x,r)^2 \, \frac{dx \, dr}{r} \right\|_{\mathcal{C}(3\Delta_0)} \forall (x,r) \in T_{\Delta_0},\]
where $C$ only depends on dimension.
\end{lemma}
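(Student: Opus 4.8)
The plan is to reduce both inequalities to a single scale–invariant ``master estimate'' for the variance of $A$ over a Carleson box, and to prove that estimate by mollifying $A$ by Whitney averages and then chaining.

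\emph{Reductions.} Since the averaging operator preserves $\Lambda$-ellipticity, $\langle A\rangle_E:=\fiint_E A\in\mathfrak A(\Lambda)$ for any region $E$; hence $\gamma(x,r)^2\le\fiint_{T(x,r)}|A-\langle A\rangle_{T(x,r)}|^2=:\operatorname{Var}_{T(x,r)}(A)$, and since the infimum defining $\alpha_2$ is taken over the subclass $\mathfrak A(\Lambda)$ of all constant matrices, $\alpha_2(z,\tau)^2\ge\operatorname{Var}_{W(z,\tau)}(A)$. So it suffices to dominate the variance of $A$ over a Carleson box by local (Whitney) variances, which are $\le\alpha_2^2$. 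I would prove the master estimate
\[
\operatorname{Var}_{T(x,r)}(A)\cdot|T(x,r)|\ \lesssim\ \iint_{T(x,Cr)}\alpha_2(z,\tau)^2\Bigl(1+\log\tfrac r\tau\Bigr)\,dz\,d\tau ,
\]
$C>1$ dimensional; all enlarged boxes occurring can be arranged to sit inside $T_{3\Delta_0}$ whenever $(y,s)\in T_\Delta$ with $\Delta\subseteq\Delta_0$, which is the origin of the factor $3$ (I would not track it further). Granted the master estimate, the pointwise bound $\gamma(x,r)^2\lesssim\|\alpha_2^2\tfrac{dz\,d\tau}\tau\|_{\mathcal C(3\Delta_0)}$ follows by writing $\log(r/\tau)=\int_\tau^r t^{-1}dt$ and inserting the \emph{slab estimate} $\iint_{\{(z,\tau)\in T(x,Cr):\tau<t\}}\alpha_2^2\,dz\,d\tau\lesssim\|\alpha_2^2\tfrac{dz\,d\tau}\tau\|_{\mathcal C(3\Delta_0)}\,r^n t$ (cover the slab by $\sim(r/t)^n$ Carleson boxes of radius $\sim t$ and use $1\lesssim t/\tau$), the $t$-integral converging because $\int_0^r\log(r/\tau)\,d\tau<\infty$; the Carleson bound for $\gamma$ follows by integrating $\gamma(y,s)^2\tfrac{dy\,ds}s$ over $T_\Delta$ and a Fubini computation, the resulting kernel being $\lesssim\tau^{-1}$.

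\emph{Proof of the master estimate.} Put $G(y,s):=\langle A\rangle_{W(y,s)}$. Then $\operatorname{Var}_{T(x,r)}(A)\lesssim\fiint_{T(x,r)}|A-G|^2+\operatorname{Var}_{T(x,r)}(G)$. The first term, using $|A(Y)-G(Y)|^2\le\fiint_{W(Y)}|A(Y)-A(Z)|^2\,dZ$ and Fubini, is comparable to a ``local square function'' $|T(x,r)|^{-1}\iint\!\!\iint_{Y\sim Z}|A(Y)-A(Z)|^2\min(s_Y,s_Z)^{-(n+1)}\,dY\,dZ$, which is in turn $\approx|T(x,r)|^{-1}\iint_{T(x,Cr)}\alpha_2^2\,dz\,d\tau$ --- no chaining, no logarithm. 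For the second term, write $\operatorname{Var}_{T(x,r)}(G)=\tfrac12\fiint\fiint|G(Y)-G(Y')|^2$ and connect $W(y,s)$ to $W(y',s')$ by ascending the vertical columns above $y$ and above $y'$ through the Whitney regions $W(y,(3/2)^is)$, $W(y',(3/2)^js')$ up to a common scale $\rho\in(r,\sqrt2\,r]$, over which one then averages. (The ratio $3/2$ rather than $2$ is forced by the fact that in this paper's normalization $W(x,r)$ and $W(x,r/2)$ are vertically \emph{disjoint}, so consecutive links overlap only when the scales are close; overlap in a fixed fraction of their comparable volumes gives $|\langle A\rangle_{W_1}-\langle A\rangle_{W_2}|^2\lesssim\operatorname{Var}_{W_1}(A)+\operatorname{Var}_{W_2}(A)\le\alpha_2(W_1)^2+\alpha_2(W_2)^2$.) Cauchy–Schwarz over the $\lesssim\log(r/s)$ links gives
\[
|G(Y)-G(Y')|^2\ \lesssim\ \bigl(M(Y)+M(Y')\bigr)\Bigl[\textstyle\sum_i\alpha_2\bigl(y,(3/2)^is\bigr)^2+\frac1{\log2}\int_r^{\sqrt2\,r}\alpha_2(y,\rho)^2\tfrac{d\rho}\rho\Bigr]+(\,y\leftrightarrow y',\ s\leftrightarrow s'\,),
\]
with $M(Y)\approx\log(r/s)$.

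\emph{The hard part} is that the column sum $\sum_i\alpha_2(y,(3/2)^is)^2$ is \emph{not} pointwise comparable to $\int_s^r\alpha_2(y,\tau)^2\tfrac{d\tau}\tau$: the oscillation coefficient can jump arbitrarily from one scale to the next, so a geometric sampling of it need not be controlled by its logarithmic integral. This is repaired only after integrating in the base height $s$ --- which one does anyway to form $\operatorname{Var}(G)$: for each fixed $i$ the substitution $\tau=(3/2)^is$ turns the $s$-integral into an integral over a full octave in $\tau$, and summing the geometric series in $i$ smears the sampled scales into a genuine continuum, yielding $\int_0^r\sum_i\alpha_2(y,(3/2)^is)^2\,ds\lesssim\int_0^{\sqrt2\,r}\alpha_2(y,\tau)^2\,d\tau$. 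The extra factors $M(Y)\approx\log(r/s)$ are then harmless --- they produce only the weight $1+\log(r/\tau)$ in the master estimate, since $\int_0^r(\log\tfrac rs)^2\,ds<\infty$ --- and the averaged top term likewise reduces to $\iint_{\Delta(x,r)\times(r,\sqrt2\,r]}\alpha_2^2\,\tfrac{dz\,d\rho}\rho$. Integrating the displayed bound over $Y,Y'\in T(x,r)$ and combining with the first term gives the master estimate. Making the Whitney–chain geometry and this interchange of summation and integration precise is the only real work; the rest is bookkeeping with the dimensional constants.
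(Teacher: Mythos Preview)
The paper does not give its own proof of this lemma; it is quoted verbatim from \cite[Remark 4.22]{DLM}, so there is nothing in the present paper to compare your argument against. Your chaining-plus-variance decomposition is the standard route to such ``Hardy inequality'' bounds and is essentially correct: the master estimate, the Fubini reduction of the pointwise and Carleson bounds, and the identification and repair of the sampling-versus-integral issue are all sound.

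One technical point deserves more care. In the first (local) term you assert that
\[
\iint_{Y\in T}\fiint_{Z\in W(Y)}|A(Y)-A(Z)|^2\,dZ\,dY\ \approx\ \iint_{T(x,Cr)}\alpha_2(z,\tau)^2\,dz\,d\tau
\]
``by Fubini''. The implicit kernel comparison fails pointwise at the edge of the Whitney box: when $s_Z$ is close to $s_Y/2$ there is almost no $(w,\tau)$ with $Y,Z\in W(w,\tau)$, since this forces $\tau\in[s_Y,2s_Z)$, an interval of vanishing length. The fix is routine --- insert a fixed fattened Whitney region $\widehat W(w,\tau)=\Delta(w,2\tau)\times(\tau/4,2\tau]$ between the two sides, bound $\operatorname{Var}_{\widehat W}$ by a finite dimensional sum of genuine $\alpha_2(w_j,\tau_j)^2$'s, and then the kernel comparison holds uniformly --- but it should be said, since as written the ``$\approx$'' hides a nontrivial step. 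A second, cosmetic point: your argument produces an enlargement $T(x,Cr)$ with a dimensional $C$ that you do not track; getting exactly the factor $3$ in the statement would require some care (and may simply reflect the particular constants chosen in \cite{DLM}).
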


Later, we will want to verify that elliptic kernels ($\tfrac{d\hm_L}{dx}$) exist and are in the function spaces $VMO$ or $VMO_{loc}$. We define these spaces now.
\begin{definition}[$BMO$, $VMO$ and $VMO_{loc}$]\label{BVMO.def}
Let $f \in L^1_{loc}(\rn)$. We say $f \in BMO(\rn)$ (or $f$ has bounded mean oscillation) if 
\[\|f\|_{BMO} := \sup_{r > 0} \sup_{x \in \rn} \fint_{\Delta(x,r)} \left|f(z) -\fint_{\Delta(x,r)}f(y)\,dy\right| \, dz < \infty.\]
We say $f \in VMO$ (or $f$ has vanishing mean oscillation) if $f$ is in BMO and 
\[ \lim_{r_0 \to 0^+} \sup_{r \in (0, r_0)} \sup_{x \in \rn} \fint_{\Delta(x,r)} \left|f(z) -\fint_{\Delta(x,r)}f(y)\,dy\right| \, dz = 0.\]
We say $f \in VMO_{loc}(\rn)$ if for every compact set $K \subset \rn$
\[ \lim_{r_0 \to 0^+} \sup_{r \in (0, r_0)} \sup_{x \in K} \fint_{\Delta(x,r)} \left|f(z) -\fint_{\Delta(x,r)}f(y)\,dy\right| \, dz = 0.\]
Notice the $VMO_{loc}(\rn)$ condition does not require $f$ to be in $BMO(\rn)$.
\end{definition}

We also want to investigate when $k_L$ is in the $A_\infty$ class (locally). Basic facts about $A_\infty$ weights can be found in \cite{GCRDF}.

\begin{definition}[$A_\infty$ weights and $A_\infty$ measures]\label{Ainfty.def}
We say a function $w$ is a weight if  $w \in L^1_{loc}(\rn)$ and  $w \ge 0$. 
A weight $w$ is said to be in $A_\infty$ if there exists $C$ such that
\begin{equation}\label{A_inftydef.eq}
\fint_{\Delta(x,r)} w(z) \, dz \le C\exp\left\{\fint_{\Delta(x,r)} \log w(z) \, dz\right\}, \quad \forall x \in \rn, r > 0.
\end{equation}
The infimum over constants $C$ such that the inequality \eqref{A_inftydef.eq} holds is called the $A_\infty$ constant, written $[w]_{A_\infty}$.

If $w \in A_\infty$ then there exists $p > 1$ and $C'$ both depending on dimension and $[w]_{A_\infty}$ such that $w$ satisfies the reverse H\"older inequality, with exponent $p$, that is,
\begin{equation}\label{rhpdef.eq}
\left(\fint_{\Delta(x,r)} w^p \, dz \right)^{1/p} \le C' \fint_{\Delta(x,r)} w \, dz, \quad \forall x \in \rn, r > 0.
\end{equation}
Conversely, any weight satisfying \eqref{rhpdef.eq} for some $p > 1$ is an $A_\infty$ weight with $[w]_{A_\infty}$ depending on $C'$ and $p$.

We say a Radon measure $\omega$ on $\mathbb{R}^n$ is in the $A_\infty$ class, if $\omega$ is absolutely continuous\footnote{Here we take the definition that $|E| = 0$ implies $\omega(E) =0$, so that absolute continuity is equivalent to the existence of a locally integrable density.} with respect to Lebesgue measure in $\mathbb{R}^n$ and its density $w := \tfrac{d\omega}{dx}$ is an $A_\infty$ weight.\footnote{In particular, an $A_\infty$ measure must be doubling. In fact as is well known in the theory of weights, a measure in the $A_\infty(dx)$ class satisfies the property that $\frac{|E|}{|\Delta|} \leq C \left(\frac{\omega(E)}{\omega(\Delta)} \right)^{\theta}$ for any surface ball $\Delta \subset \mathbb{R}^n$ and any set $E \subset \Delta$. Thus the doubling of $\omega$ simply follows from the doubling of the Lebesgue measure.}
\end{definition}

We use the following characterization of $A_\infty$, which is a modest improvement of a particular case of \cite[Theorem 3.1]{FKP}. Let $\varphi \in C^\infty_c(\Delta(0,1))$ be a radial function with $\varphi \equiv 1$ on $\Delta(0,1/2)$ and $0 \le \varphi \le 1$. Set $\psi := \nabla \varphi$ and use the notation $f_r(x) := r^{-n} f(x/r)$.

\begin{theorem}\label{FKPthrm.thrm}
Let $\omega$ be a Radon measure on $\mathbb{R}^n$.
Then $\omega$ is in the $A_\infty$ class if and only if $\omega$ is doubling and the measure $\mu$ on $\ree_+$ defined by 
\begin{equation}\label{def:mu}
d\mu(x,r): = \frac{|\omega \ast \psi_r(x)|^2}{|\omega \ast \varphi_r(x)|^2} \, \frac{dx \, dr}{r}
\end{equation}
is a Carleson measure. 
Here we use the standard definition of convolution against a measure, that is,
\[\omega \ast f(x) := \int_{\rn} f(x-y) d\omega(y).\]

Moreover, the relationship between $[\omega]_{A_\infty}$ and  $ \|\mu\|_{\mathcal{C}}$ is quantitative in the sense that if $w = \tfrac{d\omega}{dx}$, then  $[w]_{A_\infty} \le F_1(n, C_{doub},\varphi,  \|\mu\|_{\mathcal{C}})$ and $ \|\mu\|_{\mathcal{C}} \le F_2(n, C_{doub}, \varphi, [w]_{A_\infty})$.
\end{theorem}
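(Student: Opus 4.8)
The plan is to follow the scheme of Fefferman, Kenig and Pipher \cite[Theorem~3.1]{FKP} (see also \cite{BES}), tracking every constant so as to obtain the two-sided quantitative dependence, and organizing the argument so that it applies to the general mollifier $\varphi$ fixed above rather than to a particular choice. Two reductions clarify both implications. Since $\omega = w\,dx$ and $\varphi$ is normalized, bounded below on $\Delta(0,1/2)$ and supported in $\Delta(0,1)$, the doubling of $\omega$ gives $\omega\ast\varphi_r(x)\approx\fint_{\Delta(x,r)} w=:w_{\Delta(x,r)}$ with implicit constants depending only on $\varphi$ and $C_{doub}$; and since $\int_{\rn}\psi_r=\int_{\rn}\nabla\varphi_r=0$ while $\supp\psi_r\subset\Delta(0,r)$, subtracting a constant before convolving yields the pointwise bound $|\omega\ast\psi_r(x)|\lesssim_\varphi r^{-1}\fint_{\Delta(x,r)}|w-w_{\Delta(x,r)}|$. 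Moreover the Carleson norm of $\mu$ is insensitive to the precise normalized mean-zero bump in the numerator: replacing $\psi_r=(\nabla\varphi)_r$ by $r\,\partial_r(\varphi_r)$, whose convolution against $\omega$ is $r\,\partial_r(\omega\ast\varphi_r)$, changes $\|\mu\|_{\mathcal C}$ only up to a $\varphi$-dependent factor, by the Littlewood--Paley/almost-orthogonality estimates in \cite{FKP}. Consequently $d\mu$ is comparable in Carleson norm to $|\nabla_{x,r}\log(\omega\ast\varphi_r)(x)|^2\,r\,dx\,dr$, and is dominated pointwise by $\big(\fint_{\Delta(x,r)}|w-w_{\Delta(x,r)}|\,/\,w_{\Delta(x,r)}\big)^2\,\frac{dx\,dr}{r}$.

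For the implication $w\in A_\infty\Rightarrow\|\mu\|_{\mathcal C}\le F_2(n,C_{doub},\varphi,[w]_{A_\infty})$, I would use that $A_\infty$ yields doubling and a reverse H\"older inequality with exponent $1+\delta$ and constants controlled by $n$ and $[w]_{A_\infty}$. By the first reduction it suffices to bound $\iint_{T_{\Delta_0}}\big(\fint_{\Delta(x,r)}|w-w_{\Delta(x,r)}|\,/\,w_{\Delta(x,r)}\big)^2\,\frac{dx\,dr}{r}$ by $|\Delta_0|$. I would prove this by a Calder\'on--Zygmund stopping-time decomposition of $\Delta_0$: select the maximal subballs on which $w_\Delta$ first deviates from $w_{\Delta_0}$ by a large factor (depending on $n$ and $[w]_{A_\infty}$); on the complement of the selected balls $w$ stays comparable to its own averages, so there the normalized oscillation of $w$ is controlled by the oscillation of $\log w$ and one is in the classical regime where the Fefferman--Stein characterization of $BMO$ by Carleson measures applies; the selected balls have total measure at most a fixed fraction of $|\Delta_0|$ by the reverse H\"older inequality, so iterating within them and summing the resulting geometric series gives the bound with constant $C(n,C_{doub},\varphi,[w]_{A_\infty})$.

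For the converse implication, given that $\omega$ is doubling and $\|\mu\|_{\mathcal C}<\infty$, one must produce $[w]_{A_\infty}\le F_1(n,C_{doub},\varphi,\|\mu\|_{\mathcal C})$. By the second reduction, $|\nabla_{x,r}\log(\omega\ast\varphi_r)(x)|^2\,r\,dx\,dr$ is a Carleson measure with norm $\lesssim_\varphi\|\mu\|_{\mathcal C}$, while $\log(\omega\ast\varphi_r(x))\to\log w(x)$ a.e.\ as $r\to0^+$ and $\omega\ast\varphi_R(x)\approx w_{\Delta(x,R)}$. I would then run the telescoping-in-scale argument of \cite{FKP}: write $\log w(x)-\log(\omega\ast\varphi_R(x))=-\int_0^R\partial_r\log(\omega\ast\varphi_r(x))\,dr$ and, on a fixed ball $\Delta_0=\Delta(x_0,R)$, compare $\{\log(\omega\ast\varphi_r)\}_{0<r<R}$ with the $\varphi$-extension of $\log w$; the Carleson bound then furnishes a John--Nirenberg-type exponential integrability estimate $\fint_{\Delta_0}\exp\big(c\,|\log w-(\log w)_{\Delta_0}|\big)\le C$ with $c,C$ depending only on $n$, $C_{doub}$, $\varphi$ and $\|\mu\|_{\mathcal C}$. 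Combining this with $\fint_{\Delta_0}w=\fint_{\Delta_0}e^{\log w}$ and the analogous bound for $w^{-1}$ yields the $A_\infty$ inequality with the stated quantitative dependence; that $w\in A_\infty$ forces $\omega$ doubling is classical.

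The step I expect to be the main obstacle is this last one. The telescoping identity cannot be closed by a crude Cauchy--Schwarz in $r$, since $\frac{dr}{r}$ has infinite mass on $(0,R)$; extracting exponential integrability of $\log w-(\log w)_{\Delta_0}$ from the mere finiteness of $\|\mu\|_{\mathcal C}$, with no a priori $BMO$ control on $\log w$, genuinely requires a stopping-time/John--Nirenberg argument across the scales $0<r<R$. This is the technical core of \cite{FKP}, and reproducing it with explicit constants is exactly what upgrades \cite[Theorem~3.1]{FKP} to the form stated here; the remaining points --- that $A_\infty$ implies doubling, that the change-of-mollifier comparabilities depend only on $n$ and $\varphi$, and that every constant appearing in the two stopping-time arguments is tracked --- are routine.
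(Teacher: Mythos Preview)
Your proposal has a genuine gap: you assume from the outset that $\omega = w\,dx$ for a locally integrable density $w$ (you write this explicitly, and later use ``$\log(\omega\ast\varphi_r(x))\to\log w(x)$ a.e.''), but the theorem is stated for a general Radon measure. The forward implication is fine since $\omega\in A_\infty$ includes absolute continuity by definition, but for the converse you are given only that $\omega$ is doubling and $\mu$ is Carleson, and must \emph{deduce} that $\omega\ll\mathcal L^n$. That deduction is precisely the paper's contribution here: the authors note that \cite[Theorem~3.1]{FKP} already handles the weight case and then prove a separate lemma establishing absolute continuity, after which they simply cite \cite{FKP} for the quantitative statements. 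Your sketch reproduces the FKP argument for weights (which the paper does not redo) while omitting the one step the paper actually proves.

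The paper's absolute continuity argument is different in spirit from anything in your outline. They first pass (via \cite[Lemma~3.12]{FKP}) from $\varphi$ to the Gaussian $\phi$, then regularize $\omega$ by setting $\omega_i:=\phi_{\sqrt{\epsilon_i}}\ast\omega$. Each $\omega_i$ is a genuine weight, doubling uniformly in $i$. At small scales ($r_\Delta\le\sqrt{\epsilon_i}$) doubling alone forces $\omega_i$ to be essentially constant on $\Delta$, giving the $A_\infty$ inequality trivially. At large scales the heat semigroup identity $H_t\circ\omega_i=H_{t+\epsilon_i}\circ\omega$ shows that the Carleson quantity for $\omega_i$ is a time-shift of that for $\omega$, hence controlled by $\|\mu\|_{\mathcal C}$ uniformly in $i$; then the FKP argument for weights gives the $A_\infty$ inequality at those scales. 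Thus $\omega_i\in A_\infty(dx)$ with constants independent of $i$, and passing to the weak limit $\omega_i\rightharpoonup\omega$ via outer regularity yields $\omega\ll\mathcal L^n$. Without this step your telescoping argument cannot start, since neither $w$ nor $\log w$ is available.
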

\begin{remark}	
	The above theorem is false when $\omega$ is not doubling, as is pointed out in \cite{FKP} by the example of the weight $w_k(x) := \min\{1/|x|, k \} $	for large values of $k$.
\end{remark}

The original statement in \cite[Theorem 3.1]{FKP} is for \underline{weights}, or more specifically, under the hypothesis that the Radon measure $\omega$ is equal to $w \, dx$ for a locally integrable function $w$. Our contribution here is the observation that the Carleson condition on $\mu$, in fact, implies the absolute continuity of $\omega$ with respect to Lebesgue measure. We give the proof of this fact below. This combined with \cite[Theorem 2.15]{FKP} finishes the proof of Theorem \ref{FKPthrm.thrm}.

\begin{lemma}\label{FKPabscty.thrm}
	Let $\omega$ be a Radon measure on $\mathbb{R}^n$ which satisfies the doubling property. Suppose that the measure $\mu$ defined in \eqref{def:mu} is a Carleson measure. Then $\omega$ is absolute continuous with respect with the Lebesgue measure.
\end{lemma}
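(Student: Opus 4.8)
The plan is to deduce absolute continuity from the \emph{weight} case of Theorem~\ref{FKPthrm.thrm} (that is, \cite[Theorem 2.15]{FKP}) applied to the mollifications $b_r:=\omega\ast\varphi_r$, once we know that these are $A_\infty$ weights with an $A_\infty$ constant \emph{independent of $r$}. If $\omega\equiv 0$ there is nothing to prove, so assume $\omega\neq 0$; being doubling, $\omega$ then has full support, hence $b_r(x)\ge r^{-n}\omega(\Delta(x,r/2))>0$ for all $x,r$, so each $b_r$ is a genuine (smooth, positive) weight, and $b_r\,dx\rightharpoonup c_\varphi\,\omega$ weakly-$\ast$ as $r\to 0^+$ with $c_\varphi:=\int\varphi$. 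Granting $[b_r]_{A_\infty}\le K$ uniformly, the self-improvement of $A_\infty$ gives $p>1$ and $C$ depending only on $n,K$ with $b_r\in RH_p$ (constant $C$) for all $r$; by H\"older, for every ball $\Delta$ and Borel $E\subset\Delta$,
\[
\int_E b_r\,dx\ \le\ C\Big(\frac{|E|}{|\Delta|}\Big)^{1/p'}\int_\Delta b_r\,dx\qquad(0<r<r(\Delta)).
\]
Since $\int_\Delta b_r\,dx=\int(\mathbf 1_\Delta\ast\varphi_r)\,d\omega\le c_\varphi\,\omega(2\Delta)$ for $r<r(\Delta)$, and $\liminf_{r\to0^+}\int_U b_r\,dx\ge c_\varphi\,\omega(U)$ for open $U$ (lower semicontinuity of mass under weak-$\ast$ convergence), we get $\omega(U)\le C\,\omega(2\Delta)(|U|/|\Delta|)^{1/p'}$ for every open $U$ with $\overline U\subset\Delta$; outer regularity of Lebesgue measure then forces $\omega(E)=0$ whenever $|E|=0$, i.e.\ $\omega\ll dx$.

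It remains to establish $[b_r]_{A_\infty}\le K$ uniformly. Since $b_r$ is an honest weight, by the weight case of Theorem~\ref{FKPthrm.thrm} it suffices to verify, with constants independent of $r$, that $(i)$ $b_r\,dx$ is doubling and $(ii)$ the measure $\nu_r$ obtained by replacing $\omega$ with $b_r$ in \eqref{def:mu} is Carleson. Write $K_{r,\rho}:=\varphi_r\ast\varphi_\rho$; since $\psi_\rho=\rho\,\nabla_x\varphi_\rho$, one has $b_r\ast\varphi_\rho=\omega\ast K_{r,\rho}$ and $b_r\ast\psi_\rho=\rho\,\nabla_x(\omega\ast K_{r,\rho})$, so the density of $\nu_r$ equals $\rho^2\,|\nabla_x\log(\omega\ast K_{r,\rho})(x)|^2$ and that of $\mu$ equals $\rho^2\,|\nabla_x\log b_\rho(x)|^2$. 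The kernel $K_{r,\rho}$ is a nonnegative bump at scale $\max(r,\rho)$ with $\|K_{r,\rho}\|_\infty+\max(r,\rho)\|\nabla K_{r,\rho}\|_\infty\lesssim_\varphi\max(r,\rho)^{-n}$ and $K_{r,\rho}\gtrsim_\varphi\max(r,\rho)^{-n}$ near $0$; combining these with the doubling of $\omega$ yields the elementary estimates
\[
\omega\ast K_{r,\rho}(x)\ \approx_{\varphi,C_{doub}}\ \max(r,\rho)^{-n}\,\omega\big(\Delta(x,\max(r,\rho))\big),\qquad
|\nabla_x(\omega\ast K_{r,\rho})(x)|\ \lesssim_{\varphi,C_{doub}}\ \frac{\omega\ast K_{r,\rho}(x)}{\max(r,\rho)}.
\]
The first gives $(i)$ at once (compare $b_r$-averages of balls to $\omega$ of balls for radii $\gtrsim r$, and use that $b_r$ is essentially constant at radii $\lesssim r$), and the second gives the \emph{crude} pointwise bound $\rho^2\,|\nabla_x\log(\omega\ast K_{r,\rho})(x)|^2\lesssim_{\varphi,C_{doub}}\min(1,\rho^2/r^2)$, which is already summable at small scales: $\int_{B(x_0,R)}\int_0^{C_1 r}\min(1,\rho^2/r^2)\,\tfrac{d\rho\,dx}{\rho}\lesssim R^{n}$.

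At the scales $\rho\gtrsim r$ the crude bound $\min(1,\rho^2/r^2)\equiv1$ is \emph{not} summable, and here the hypothesis on $\mu$ enters. For $\rho\ge C_1 r$ one has $\omega\ast K_{r,\rho}=b_r\ast\varphi_\rho\approx c_\varphi\,b_\rho$ with good control, because $b_\rho$ and $\nabla b_\rho$ vary slowly at scale $r\le\rho$ (again by doubling), and a short computation gives
\[
\rho^2\,|\nabla_x\log(\omega\ast K_{r,\rho})(x)|^2\ \lesssim_{\varphi,C_{doub}}\ \rho^2\,|\nabla_x\log b_\rho(x)|^2+\frac{r^2}{\rho^2}.
\]
Integrating over a Carleson box $T(x_0,R)$, the first term contributes $\lesssim\mu\big(T(x_0,2R)\big)\lesssim\|\mu\|_{\mathcal{C}}\,R^n$, while $\int_{B(x_0,R)}\int_{C_1 r}^{R}(r^2/\rho^2)\,\tfrac{d\rho\,dx}{\rho}\lesssim R^n$. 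Together with the small-scale estimate and a harmless $\rho\sim r$ transition region, this yields $\|\nu_r\|_{\mathcal{C}}\lesssim_{n,C_{doub},\varphi,\|\mu\|_{\mathcal{C}}}1$ uniformly in $r$, completing the argument.

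The step I expect to be the main obstacle is exactly $(ii)$ --- the uniform-in-$r$ Carleson estimate for $\nu_r$. The difficulty is that the Carleson hypothesis controls $\rho^2|\nabla_x\log b_\rho|^2$ only \emph{after} integrating against $d\rho/\rho$, so it cannot be applied ``scale by scale,'' whereas $\nu_r$ at a scale $\rho$ involves the fixed mollification $b_r$. The way around it is the scale splitting above: the scales $\rho\lesssim r$ of $\nu_r$ are controlled by the doubling of $\omega$ alone, the scales $\rho\gtrsim r$ reduce to the corresponding slices of $\mu$ up to an $r^2/\rho^2$ error that is summable away from $\rho=0$, and the two regimes glue across $\rho\sim r$. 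Everything else --- the reduction to a uniform $A_\infty$ bound, the bump estimates on $K_{r,\rho}$, the uniform doubling of $b_r\,dx$, and the final weak-$\ast$ limiting argument --- is routine.
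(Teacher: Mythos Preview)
Your proof is correct and follows the same overall skeleton as the paper: mollify $\omega$ to get honest weights, establish a \emph{uniform} $A_\infty$ bound for the mollifications via Theorem~\ref{FKPthrm.thrm} (weight case), and pass to the weak-$\ast$ limit to force absolute continuity. The limiting argument and the uniform doubling of the mollifications are essentially identical in both proofs.

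The genuine difference lies in how the uniform Carleson bound for the mollified measures is obtained. The paper first invokes \cite[Lemma~3.12]{FKP} to replace the compactly supported $\varphi$ by the Gaussian $\phi$, and then exploits the heat \emph{semigroup identity} $H_t\circ(\phi_{\sqrt{\epsilon}}\ast\omega)=H_{t+\epsilon}\circ\omega$. This turns the Carleson integral for the mollified weight at scales $s\ge\sqrt{\epsilon}$ into a time-translate of the Carleson integral for $\omega$ itself, giving the large-scale bound for free; at scales $s\le\sqrt{\epsilon}$ the paper verifies the $A_\infty$ inequality \emph{directly} from near-constancy of $\omega_i$, and then appeals to the \emph{proof} (not just the statement) of \cite[Theorem~3.4]{FKP} to glue the two regimes. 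Your approach instead keeps the original bump $\varphi$ and analyzes $K_{r,\rho}=\varphi_r\ast\varphi_\rho$ by hand: for $\rho\lesssim r$ the crude Lipschitz bound $|\nabla\log(\omega\ast K_{r,\rho})|\lesssim 1/\max(r,\rho)$ (from doubling alone) already integrates; for $\rho\gtrsim r$ you run a perturbation estimate, using $|\nabla^2 b_\rho|\lesssim b_\rho/\rho^2$ to show $\rho|\nabla\log(b_\rho\ast\varphi_r)-\nabla\log b_\rho|\lesssim r/\rho$, which reduces to the given Carleson measure $\mu$ plus a summable error $r^2/\rho^2$.

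What each approach buys: the paper's semigroup trick is algebraically cleaner and shortcuts the large-scale estimate to a one-line identity, at the cost of switching kernels and needing to reopen the FKP proof to handle the small/large scale dichotomy. Your route is more elementary and self-contained: it uses only size and smoothness of a generic mollifier together with doubling, yields a \emph{full} uniform Carleson bound on $\nu_r$, and then applies the weight case of Theorem~\ref{FKPthrm.thrm} as a black box. Either way the conclusion is the same, and your perturbation step (the place you flagged as the main obstacle) is indeed correct as written.
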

\begin{proof}

By \cite{FKP}, in particular\footnote{Note that while \cite[Lemma 3.12]{FKP} is stated for (convolutions with) weights, the proof goes through without modification for (convolutions with) measures.} \cite[Lemma 3.12]{FKP} and its converse direction, we may replace $\varphi$ with the Gaussian kernel $\phi(x) = c_n e^{-|x|^2}$ and replace $\psi$ with $\nabla \phi$. Let $\{\epsilon_i\}$ be a sequence of positive numbers tending to zero, and we define a weight $\omega_i$ as follows
\begin{equation}\label{def:omegai}
	\omega_i(x): = \phi_{\sqrt{\epsilon_i}} \ast \omega(x) = \frac{1}{\epsilon_i^{n/2}} \int \phi\left( \frac{x-y}{\sqrt{\epsilon_i}} \right) d\omega(y). 
\end{equation} 
Clearly $\omega_i \rightharpoonup \omega$ as Radon measures, and each $\omega_i$ is a doubling measure with constant only depending on the doubling constant of $\omega$. 
We claim that $\omega_i \in A_\infty(dx)$ and the $A_\infty$ norms of the $\omega_i$'s are uniform in $i$, or equivalently, there exists a constant $C$ (independent of $i$) such that for any surface ball $\Delta \subset \mathbb{R}^n$,
\begin{equation}\label{eq:Ainfty}
	\left| \log \left( \fint_{\Delta} \omega_i \, dx \right) - \fint_{\Delta} \log \omega_i \, dx \right| \leq C. 
\end{equation} 
Notice that this inequality gives \eqref{A_inftydef.eq} with a constant $e^C$.

We first consider small scales, i.e. when the radius of the surface ball satisfies $r_\Delta \leq \sqrt{\epsilon_i}$. Since $\omega$ is a doubling measure, by the definition \eqref{def:omegai} and the rapid decay of $\phi$ we have that
\[ \omega_i(x) \approx \frac{1}{\epsilon_i^{n/2}} \omega\left(\Delta(x, \sqrt{\epsilon_i}) \right). \]
Hence for any pair $x,y \in \mathbb{R}^n$ such that $|x-y| \leq \sqrt{\epsilon_i}$, we have that
\begin{align*}
	\omega_i(x) \approx \frac{1}{\epsilon_i^{n/2}} \omega\left(\Delta(x, \sqrt{\epsilon_i}) \right) \lesssim \frac{1}{\epsilon_i^{n/2}} \omega\left(\Delta(y, 2\sqrt{\epsilon_i}) \right) \lesssim \frac{1}{\epsilon_i^{n/2}} \omega(\Delta(y, \sqrt{\epsilon_i})) \approx \omega_i(y).
\end{align*}
Therefore \eqref{eq:Ainfty} follows with a constant only depending on the doubling constant of $\omega$.

Next we prove \eqref{eq:Ainfty} for large scales, i.e. when $r_\Delta \geq \sqrt{\epsilon_i}$.
Let $H_t$ denote the heat semigroup and $K(t, x, y)$ the heat kernel, i.e. for every $f\in C_c^\infty(\mathbb{R}^n)$ 
\[ H_t \circ f(x) = \int_{\mathbb{R}^n} K(t,x,y) f(y) \, dy = \frac{c_n}{t^{n/2}} \int_{\mathbb{R}^n}  e^{-\frac{|x-y|^2}{t}} f(y) \, dy. \] 
By definition
\begin{equation}\label{eq:heatkernelcov}
	H_t \circ f(x) = \phi_{\sqrt{t}} \ast f(x). 
\end{equation} 
and thus its spatial derivative satisfies
\begin{equation}\label{eq:gradheatkernel}
	\sqrt{t} \, \nabla \left( H_t \circ f \right)(x) = \sqrt{t} \, \nabla \left( \phi_{\sqrt{t}} \ast f \right)(x) =   \left( \nabla \phi \right)_{\sqrt{t}} \ast f(x). 
\end{equation} 
Moreover, \eqref{eq:heatkernelcov} and \eqref{eq:gradheatkernel} also hold when we replace $f$ by a doubling measure.
Denote 
\[ u(x,t) := H_t \circ \omega(x), \quad u_i(x,t):= H_t \circ \omega_i(x). \]
By the definition of $\omega_i$ and the semigroup property of the heat kernel, we have that
\begin{equation}\label{eq:trsltime}
	u_i(x,t) = H_t \circ \left( \phi_{\sqrt{\epsilon_i}} \ast \omega \right)(x) = H_t \circ \left( H_{\epsilon_i} \circ \omega \right)(x) = H_{t+\epsilon_i} \circ \omega(x) = u(x, t+\epsilon_i). 
\end{equation} 
Hence the spatial derivative satisfies
\begin{equation}\label{eq:gradtrsltime}
	 \nabla u_i(x,t) = \nabla u(x,t+\epsilon_i).
\end{equation}
By \eqref{eq:heatkernelcov}, \eqref{eq:gradheatkernel} and a change of variable ($t = \sqrt{r}$), it is easy to show that $d\mu$ defined in \eqref{def:mu} (with the Gaussian in place of $\varphi$) is a Carleson measure if and only if
\begin{equation}\label{eq:uCarl}
	\mathcal{C}(u) := \sup_{x_0 \in \mathbb{R}^n \atop{s>0} } \frac{1}{s^n} \int_0^{s^2} \int_{B_{s}(x_0)} \frac{|\nabla u(x,t)|^2}{u(x,t)^2} dx \, dt < +\infty,
\end{equation}
and moreover these two Carleson measure norms are equivalent.
By \eqref{eq:trsltime} and \eqref{eq:gradtrsltime}, we have
\begin{align*}
	\int_0^{s^2} \int_{\Delta(x_0,s)} \frac{|\nabla u_i(x,t)|^2}{u_i(x,t)^2} dx\, dt & = \int_0^{s^2} \int_{\Delta(x_0,s)} \frac{|\nabla u(x,t + \epsilon_i)|^2}{u(x,t + \epsilon_i)^2} dx\, dt \\
	& = \int_{\epsilon_i}^{s^2 + \epsilon_i} \int_{\Delta(x_0,s)} \frac{|\nabla u(x,\tau)|^2}{u(x,\tau )^2} dx\, d\tau \\
	& \leq \int_{0}^{s^2 + \epsilon_i} \int_{\Delta(x_0,\sqrt{s^2+\epsilon_i})} \frac{|\nabla u(x,\tau)|^2}{u(x,\tau)^2} dx\, d\tau \\
	& \leq (s^2+\epsilon_i)^{n/2} \cdot \mathcal{C}(u).
\end{align*}
Therefore as long as $s\geq \sqrt{\epsilon_i}$, we have that
\begin{equation}\label{eq:uiCarl}
	\frac{1}{s^n} \int_0^{s^2} \int_{B_s(x_0)} \frac{|\nabla u_i(x,t)|^2}{u_i(x,t)^2} dx\, dt \leq \mathcal{C}(u) \frac{(s^2+\epsilon_i)^{n/2}}{s^n} \le 2^{n/2} \mathcal{C}(u). 
\end{equation} 
Applying the same argument as in the proof of \cite[Theorem 3.4]{FKP} to the weight $\omega_i$, we conclude that the Carleson-type estimate in \eqref{eq:uiCarl} implies the $A_\infty$-type estimate \eqref{eq:Ainfty}, also for large scales $r_\Delta \geq \sqrt{\epsilon_i}$. This finishes the proof of the claim.

 Since $\omega_i \in A_\infty(dx)$ with a constant independent of $i$, the following holds: For any $\epsilon>0$, there exists $\delta>0$ (depending only on $\epsilon$, not on $i$ or $R$) such that for every $R>0$,
\begin{equation}\label{eq:unifAinfty}
	\text{for any set } E\subset \Delta_{2R} \text{ satisfying } \frac{|E|}{|\Delta_{2R}|} < \delta, \text{ we have } \frac{\omega_i(E)}{\omega_i(\Delta_{2R})}< \epsilon, 
\end{equation}  
where we use $|\cdot|$ to denote the Lebesgue measure in $\mathbb{R}^n$ and the notation $\Delta_R := \Delta(0,R)$.

Fix $R > 0$ and $\epsilon > 0$, and let $E$ be an arbitrary set in $\Delta_R$ such that $|E|< \delta|\Delta_R|$, where $\delta$ is as above. By the outer approximation by open sets, there exists an open set $U \supset E$ such that $|U| < 2^n \delta|\Delta_R| = \delta|\Delta_{2R}|$. Without loss of generality we may assume that $U \subset \Delta_{2R}$. It follows from \eqref{eq:unifAinfty} that 
\[ \omega_i(U) < \epsilon \omega_i(\Delta_{2R}) \leq \epsilon \omega_i(\overline{\Delta_{2R}}) \] for every $i$. Since $\omega_i \rightharpoonup \omega$, $U$ is open and $\overline{\Delta_{2R}}$ is compact, we have
\[ \omega(E) \leq \omega(U) \leq \liminf_{i\to \infty} \omega_i(U) \leq \epsilon \limsup_{i\to \infty} \omega_i(\overline{\Delta_{2R}}) \le \epsilon \omega(\overline{\Delta_{2R}}) \leq C\epsilon \omega(\Delta_R), \]
where we use the doubling property of $\omega$ in the last inequality.
Thus, we have shown that for a fixed $R > 0$ and for every $\epsilon > 0$, there exists $\delta >0$ so that the following holds: 
\begin{equation}\label{eq:Ainfty}
	\text{Every } E \subset \Delta_R \text{ with } |E| < \delta|\Delta_R| \text{ satisfies } \omega(E) \leq C \epsilon \omega(\Delta_{R}).
\end{equation} 
In particular, this indicates that $\omega$ is absolute continuous with respect to the Lebesgue measure within the ball $\Delta_R$ for every $R>0$, and thus $\omega$ is absolutely continuous with respect to the Lebesgue measure on $\mathbb{R}^n$. In fact, a similar argument shows directly that $\omega$ is in the class $A_\infty(dx)$
\end{proof}

We also have the following characterizations for a weight to be
\textit{`asymptotic  $A_\infty$'}. This was observed by Sarason \cite{Sarason} and thoroughly investigated by Korey \cite{Kor,Kor2}. 

\begin{theorem}[{\cite[Theorem 1]{Kor}, \cite[Theorem 10]{Kor2}}]\label{Koreyequiv.thrm}
Let $w$ be a weight.
The following are equivalent,
\begin{enumerate}
\item $w \in A_\infty$ and there exists $p > 1$ such that 
\[\lim_{r_0 \to 0^+} \sup_{x \in \rn} \sup_{r \in (0,r_0)} \frac{\left(\fint_{\Delta(x,r)} w^p \, dz \right)^{1/p}}{\fint_{\Delta(x,r)} w \, dz}  = 1.\]
\item $w \in A_\infty$ and for \underline{any} $p > 1$
\[\lim_{r_0 \to 0^+} \sup_{x \in \rn} \sup_{r \in (0,r_0)} \frac{\left(\fint_{\Delta(x,r)} w^p \, dz \right)^{1/p}}{\fint_{\Delta(x,r)} w \, dz}  = 1.\]
\item $w \in A_\infty$ and
\[\lim_{r_0 \to 0^+} \sup_{x \in \rn} \sup_{r \in (0,r_0)} \frac{\fint_{\Delta(x,r)} w \, dz}{\exp\left\{\fint_{\Delta(x,r)} \log w \, dz \right\}} = 1.\]

\item The measure $w \,dx$ is doubling ($\int_{\Delta(x,2r)} w \, dz \le C_{doub} \int_{\Delta(x,r)} w \, dz$) and measure $\mu$ on $\ree_+$ defined by 
\[d\mu(x,r): = \frac{|w \ast \psi_r(x)|^2}{|w \ast \phi_r(x)|^2} \, \frac{dx \, dr}{r}\]
as in Theorem \ref{FKPthrm.thrm} is a \underline{vanishing} Carleson measure.
\item $\log w \in VMO$.
\end{enumerate}
\end{theorem}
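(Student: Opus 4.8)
The plan is to establish the cycle $(2)\Rightarrow(1)\Rightarrow(3)\Rightarrow(5)\Rightarrow(2)$ and then attach $(4)$ through the equivalence $(3)\Leftrightarrow(4)$. The first thing to observe is that every one of $(1)$--$(5)$ forces $w\in A_\infty$: for $(1),(2),(3)$ this is built into the statement; for $(4)$ it is the already-known half of Theorem \ref{FKPthrm.thrm} (doubling plus $\mu$ a Carleson measure implies $w\in A_\infty$); and for $(5)$ it will come out of the argument below (or, more efficiently, from $(5)\Rightarrow(4)$ and Theorem \ref{FKPthrm.thrm}). In particular $\log w\in BMO(\rn)$ with norm bounded in terms of $[w]_{A_\infty}$, so the John--Nirenberg inequality is available — and, crucially, in its self-improving form: there are dimensional constants so that if $B$ is a ball with $\|\log w\|_{BMO(2B)}\le\delta$ with $\delta$ (resp.\ $p\delta$) below a dimensional threshold, then $\fint_B w=e^{(\log w)_B}(1+O_n(\delta))$ and $(\fint_B w^p)^{1/p}=e^{(\log w)_B}(1+O_{n,p}(\delta))$. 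This is the engine that turns ``bounded'' into ``$1+o(1)$''. We abbreviate $\mathsf R_p(B)=(\fint_B w^p)^{1/p}/\fint_B w$ and $\mathsf A(B)=\fint_B w/\exp(\fint_B\log w)$, both $\ge1$ by Jensen, and set $g_B:=\log w-(\log w)_B$; all limits below are as the radius of $B$ tends to $0$, uniformly in the center.

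The implication $(2)\Rightarrow(1)$ is trivial. For $(1)\Rightarrow(3)$, fix $B$, regard $(B,|B|^{-1}dx)$ as a probability space, and put $Y=w/\fint_B w$, so $\mathbb E Y=1$ and the hypothesis is $\mathbb E[Y^p]\to1$. Hence the convexity defect $\mathbb E[Y^p-1-p(Y-1)]=\mathbb E[Y^p]-1\to0$; since $Y^p-1-p(Y-1)$ is nonnegative, vanishes to second order at $Y=1$, is $\gtrsim Y^p$ for large $Y$ and is bounded below by a positive constant for $Y$ near $0$, this yields $\mathbb E[(Y-1)^2\mathbf 1_{\{|Y-1|\le1\}}]\to0$, $\mathbb E[Y\,\mathbf 1_{\{Y\ge2\}}]\to0$, and $|\{Y\le1/2\}|/|B|\to0$. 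On the other hand $\log\mathsf A(B)=\mathbb E[(Y-1)-\log Y]$, and $(Y-1)-\log Y\ge0$ is comparable to $(Y-1)^2$ near $1$, to $Y$ for large $Y$, and to $|\log Y|$ near $0$; the first two regimes are killed by the first two limits, and on $\{Y\le1/2\}$ one applies Cauchy--Schwarz together with the uniform $A_\infty$ bound $\fint_B|\log w-\log\fint_B w|^2\lesssim_{[w]_{A_\infty}}1$ to see that $\mathbb E[|\log Y|\,\mathbf 1_{\{Y\le1/2\}}]\to0$ as well. Therefore $\mathsf A(B)\to1$, which is $(3)$.

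The step $(3)\Rightarrow(5)$ is the cleanest and uses only $\log w\in L^1_{loc}$: from $\mathsf A(B)\to1$ we have $\fint_B e^{g_B}\to1$ with $\fint_B g_B=0$, so $\fint_B(e^{g_B}-1-g_B)\to0$; as $e^t-1-t\ge t^2/2$ for $t\ge0$ this gives $\fint_B(g_B^+)^2\to0$, and then $\fint_B|g_B|=2\fint_B g_B^+\le 2(\fint_B(g_B^+)^2)^{1/2}\to0$ by Cauchy--Schwarz and $\fint_B g_B=0$. Since $\log w$ is already in $BMO(\rn)$, this says $\log w\in VMO$, i.e.\ $(5)$. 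For $(5)\Rightarrow(2)$: given $p>1$, shrink the scale until $p\|\log w\|_{BMO(2B)}$ is below the John--Nirenberg threshold (possible since $\log w\in VMO$), and the self-improving estimate gives $\mathsf R_p(B)=1+O_{n,p}(\|\log w\|_{BMO(2B)})\to1$. There remains the claim $w\in A_\infty$. On all sufficiently small balls the same John--Nirenberg estimate produces $(\fint_B w)(\fint_B w^{-1})\le C$, a uniform $A_2$ bound at small scales, hence $w\,dx$ is doubling at small scales; the passage to all scales uses the global bound $\|\log w\|_{BMO(\rn)}<\infty$ and a covering/chaining argument — or, more economically, one first proves $(5)\Rightarrow(4)$ as below and invokes Theorem \ref{FKPthrm.thrm}.

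It remains to treat $(3)\Leftrightarrow(4)$, a vanishing-trace refinement of Fefferman--Kenig--Pipher's Theorem \ref{FKPthrm.thrm}. Using that $w$ is doubling and that $\psi=\nabla\varphi$ has mean zero, one may subtract $\fint_{\Delta(x,r)}w$ inside $w\ast\psi_r(x)$; then the Carleson density $|\Delta|^{-1}\mu(T_\Delta)$ is comparable to an $L^2$ square function measuring the oscillation of $w$ relative to its local average over Whitney regions inside $T_\Delta$, and the John--Nirenberg comparison between the oscillation of $w$ and that of $\log w$ shows this square function has vanishing trace exactly when $\log w\in VMO$, which by the small-scale/large-scale equivalence already established is equivalent to $\mathsf A(B)\to1$. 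This closes the cycle. The main obstacle is twofold. First, the quantitative bookkeeping: each step must deliver a genuine $1+o(1)$ rather than a mere $O(1)$, which is why John--Nirenberg must be used in sharp self-improving form and why the ``bad set'' contributions (where $Y$ is far from $1$, i.e.\ $|\log w|$ is large) must be absorbed using the ambient $A_\infty$ bound — precisely the place where $\bigcap_{p>1}RH_p$ is too weak, consistent with the strict inclusion $A_{\infty,as}\subsetneq\bigcap_{p>1}RH_p$ mentioned in the introduction. Second, and more delicate, the passage from local regularity (reverse H\"older or $A_2$ on small balls, with uniform constant) to a global $A_\infty$ statement: small scales do not control large scales on their own, and it is the doubling hypothesis in $(4)$ — equivalently, the use of Theorem \ref{FKPthrm.thrm} — that bridges them, which is also why doubling cannot be dropped (cf.\ the Remark after Theorem \ref{FKPthrm.thrm}) and why the FKP machinery resurfaces in $(3)\Leftrightarrow(4)$.
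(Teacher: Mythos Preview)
The paper does not give its own proof of this theorem; it is quoted directly from Korey \cite{Kor,Kor2} (and Sarason \cite{Sarason}) and used as a black box. There is therefore no ``paper's proof'' to compare against.

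Your outline is a reasonable reconstruction of the standard route, and the steps $(2)\Rightarrow(1)$, $(1)\Rightarrow(3)$, and $(3)\Rightarrow(5)$ are carried out correctly. The convexity-defect argument for $(1)\Rightarrow(3)$ and the use of $e^{g_B}-1-g_B\ge \tfrac12 (g_B^+)^2$ for $(3)\Rightarrow(5)$ are clean and essentially what one finds in Korey's papers.

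There are, however, two genuine gaps. First, in $(5)\Rightarrow(2)$ you still owe the reader the global conclusion $w\in A_\infty$. Your route (b), ``go through $(5)\Rightarrow(4)$ and invoke Theorem~\ref{FKPthrm.thrm}'', is circular: condition $(4)$ \emph{assumes} doubling, so you must first establish doubling of $w\,dx$ at all scales from $\log w\in VMO$ alone, which is exactly the missing point. Your route (a), ``covering/chaining using $\|\log w\|_{BMO}<\infty$'', is not an argument as written: chaining small balls across a large ball $B$ produces constants of size $C^{r(B)/r_0}$, which blow up, so something more is needed (this is precisely the difficulty Korey addresses, and it is not a two-line remark). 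Second, your treatment of $(3)\Leftrightarrow(4)$ is only a slogan. Saying that the Carleson density ``is comparable to an $L^2$ square function measuring the oscillation of $w$'' and that ``John--Nirenberg'' then matches this to $\log w\in VMO$ suppresses the entire content of the \cite{FKP}/\cite{Kor} analysis; in particular, there is no pointwise comparison between the FKP density and any local oscillation of $w$, and the actual proof in \cite{FKP} proceeds via a stopping-time decomposition and a global estimate, not a pointwise one. If you want a self-contained proof you must either reproduce that machinery or cite it; as written this paragraph does not constitute a proof.
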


By inspection, \cite[Theorem 10]{Kor2} can be localized and we will use the following. 
\begin{theorem}[{\cite[Theorem 10]{Kor2}}]\label{VMOlocKorey.thrm}
Let $w$ be a weight. Then $\log w \in VMO_{loc}$ if and only if for every $R > 0$
\[\lim_{r_0 \to 0^+} \sup_{x \in \Delta(0,R)} \sup_{r \in (0,r_0)} \frac{\left(\fint_{\Delta(x,r)} w^2 \, dz \right)^{1/2}}{\fint_{\Delta(x,r)} w \, dz}  = 1.\]
\end{theorem}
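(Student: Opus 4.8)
The plan is to prove the two implications of the ``if and only if'' separately; in both, the key localization principle is that only balls at small scales near a fixed base point ever enter, so the supremum over $x\in\rn$ appearing in \cite[Theorem~10]{Kor2} (cf. Theorem~\ref{Koreyequiv.thrm}) may be replaced by a supremum over $x\in\Delta(0,R)$ at no cost: if $x\in\Delta(0,R)$ and $r<r_0<R$, then $\Delta(x,r)$ together with \emph{all} of its sub-balls lies in $\Delta(0,2R)$. Write $f_E:=\fint_E f$ throughout.

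For the direction ``$\log w\in VMO_{loc}\Rightarrow$ the stated reverse H\"older limit'', fix $R>0$ and put
\[
\eps(r_0):=\sup_{x\in\Delta(0,2R)}\ \sup_{0<\rho<r_0}\ \fint_{\Delta(x,\rho)}\big|\log w-(\log w)_{\Delta(x,\rho)}\big|,
\]
which tends to $0$ as $r_0\to 0^+$ since $\overline{\Delta(0,2R)}$ is compact and $\log w\in VMO_{loc}$. For $x\in\Delta(0,R)$ and $r<r_0$ every sub-ball of $\Delta(x,r)$ has radius $<r_0$ and center in $\Delta(0,2R)$, so the $BMO$ oscillation of $\log w$ over $\Delta(x,r)$, at all scales, is at most $\eps(r_0)$. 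For $r_0$ small enough that $\eps(r_0)$ is below the John--Nirenberg threshold, John--Nirenberg applied on $\Delta(x,r)$ --- together with the fact that $\log w-(\log w)_{\Delta(x,r)}$ has mean zero there --- gives $\fint_{\Delta(x,r)}e^{2(\log w-(\log w)_{\Delta(x,r)})}\le 1+C\eps(r_0)^2$. Writing $\fint_{\Delta}w^{k}=e^{k(\log w)_{\Delta}}\,\fint_{\Delta}e^{k(\log w-(\log w)_{\Delta})}$ for $k=1,2$ and using $\fint_{\Delta}e^{\log w-(\log w)_{\Delta}}\ge 1$ (Jensen), one gets
\[
1 \le \frac{\big(\fint_{\Delta(x,r)}w^{2}\big)^{1/2}}{\fint_{\Delta(x,r)}w} \le \big(1+C\eps(r_0)^{2}\big)^{1/2},
\]
the lower bound being Cauchy--Schwarz; taking the supremum over $x\in\Delta(0,R)$, $r<r_0$, and letting $r_0\to 0^+$ gives the claim.

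The converse is where I expect the genuine work to lie, and here I would quote and localize Korey's argument rather than reprove it. The stated limit says exactly that $w$ obeys the reverse H\"older inequality with exponent $2$ and constant $1+\eta(r_0)$, $\eta(r_0)\to 0$, on every ball of radius $<r_0$ centered in $\Delta(0,R)$; equivalently $\fint_{\Delta}(w-w_{\Delta})^{2}\le 3\eta(r_0)\,w_{\Delta}^{2}$ on such $\Delta$. This is \emph{not} a single-ball statement that forces small oscillation of $\log w$: a single ball can carry a weight whose reverse H\"older ratio is arbitrarily close to $1$ yet on which $\log w$ oscillates by an arbitrarily large amount (take $w$ extremely small on a set whose measure is a small multiple of $\eta$, and essentially constant elsewhere). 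What rescues the converse is the uniformity of the reverse H\"older control over \emph{all} small sub-balls; Korey's argument for \cite[Theorem~10]{Kor2} uses this to upgrade the control to a quantitative bound on the $A_\infty$ entropy constant, $w_{\Delta}\le\big(1+F(\eta(r_0))\big)\exp\big((\log w)_{\Delta}\big)$ with $F(\eta)\to 0$, uniformly over balls $\Delta$ of radius $<r_0$ centered in $\Delta(0,R)$. The only thing to verify for the localization is that this argument inspects only balls of radius $\lesssim r_0$ located near the given center --- hence contained in $\Delta(0,2R)$ --- which it does; this is the ``inspection'' referred to in the statement.

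Finally, passing from the entropy bound to small oscillation of $\log w$ is elementary and purely local. With $g:=\log(w/w_{\Delta})$ one has $\fint_{\Delta}e^{g}=1$ and $\fint_{\Delta}g=-\log\big(w_{\Delta}/\exp((\log w)_{\Delta})\big)\in[-F(\eta),0]$ (so the entropy bound in particular forces $\log w\in L^{1}_{loc}$), and the convexity inequality $e^{g}-1-g\gtrsim\min(g^{2},|g|)$ gives $\fint_{\Delta}\min(g^{2},|g|)\lesssim -\fint_{\Delta}g\le F(\eta)$; splitting $\fint_{\Delta}|g|$ over $\{|g|\le 1\}$ and $\{|g|>1\}$ and using Cauchy--Schwarz on the first piece yields $\fint_{\Delta}|g|\lesssim\sqrt{F(\eta)}$, whence $\fint_{\Delta}|\log w-(\log w)_{\Delta}|\le 2\fint_{\Delta}|g|\lesssim\sqrt{F(\eta(r_0))}\to 0$ as $r_0\to 0^+$, uniformly over balls $\Delta$ of small radius centered in $\Delta(0,R)$. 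As $R>0$ was arbitrary, this is the assertion $\log w\in VMO_{loc}$. In short, every step is John--Nirenberg, Jensen, Cauchy--Schwarz, or an elementary convexity estimate, except the converse's passage from ``reverse H\"older constant $\to 1$ uniformly at small scales'' to ``$A_\infty$ entropy constant $\to 1$ uniformly at small scales,'' which is the local form of Korey's theorem and the one genuine obstacle.
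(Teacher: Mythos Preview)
The paper does not supply its own proof of this statement: it is stated as a citation of \cite[Theorem~10]{Kor2} together with the remark that ``by inspection'' Korey's argument localizes. Your proposal is therefore more detailed than anything in the paper, and your identification of the localization principle --- that every ball entering the argument has small radius and center in $\Delta(0,2R)$, so the supremum over $x\in\rn$ can be replaced by one over $x\in\Delta(0,R)$ --- is exactly what the phrase ``by inspection'' is pointing at.

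Your forward direction via John--Nirenberg and Jensen is correct, and your elementary passage from the small-entropy bound to small mean oscillation of $\log w$ (using $e^{g}-1-g\gtrsim\min(g^{2},|g|)$) is a clean argument that the paper does not spell out. The one step you defer --- upgrading ``reverse H\"older constant $\to 1$ uniformly on small sub-balls'' to ``$A_\infty$ entropy constant $\to 1$'' --- is precisely the content of Korey's theorem, and deferring it to \cite{Kor2} is exactly what the paper does as well. So your approach is consistent with, and strictly more explicit than, the paper's treatment.
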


\section{Classical Estimates and the \cite{DLM} energy estimates}\label{PDEDLM.sect}
We begin this section by recalling some classical estimates for positive solutions to divergence form elliptic equations in the upper half space that vanish at the boundary. After doing so, we will specialize to the case of operators whose coefficient matrix satisfies a (local) weak DKP condition and introduce the energy estimates proved in \cite{DLM}. The following lemma is explicitly stated and proved in \cite[Lemma 2.8]{DLM}.
\begin{lemma}\label{engcs.lem}
Let $L = -\div A \nabla$ be a divergence form elliptic operator (on $\ree_+$). Suppose that $x \in \rn$, $r > 0$ and $u \in W^{1,2}(T(x,2r))$  is a non-negative weak solution to $Lu = 0$ in $T(x,2r)$ which vanishes continuously on $\Delta(x,2r)$. Then there exist implicit constants, depending only on $n$ and the ellipticity constant of $A$ such that 
\[\fiint_{T(x,r)} |\nabla u(Y)|^2 \, dY \approx \frac{u(x,r)^2}{r^2}.\]
\end{lemma}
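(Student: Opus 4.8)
The statement is a standard two-sided comparison, so the plan is to prove the two inequalities in ``$\approx$'' separately. Throughout, $u(x,r)$ denotes the value of $u$ at the corkscrew point $(x,r) \in \ree_+$ (the top of the Whitney region $W(x,r)$), and every constant that appears will depend only on $n$ and the ellipticity constant $\Lambda$. For the lower bound $\fiint_{T(x,r)} |\nabla u|^2\,dY \gtrsim u(x,r)^2/r^2$ I would combine a Poincar\'e-type inequality — which needs no subtracted mean precisely because $u$ vanishes continuously on $\Delta(x,2r)$ — with the interior Harnack inequality for the nonnegative solution $u$. For the upper bound I would use the Caccioppoli (energy) inequality at the flat boundary to trade $|\nabla u|^2$ for $u^2$ on a slightly enlarged Carleson box, and then the Carleson estimate (boundary Harnack principle) for nonnegative solutions vanishing on a boundary ball to dominate $u$ on that box by its value at a corkscrew point, which the interior Harnack inequality identifies with $u(x,r)$ up to a harmless factor.

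For the lower bound, write a point of $T(x,r)$ as $(y,t)$ with $|x-y|<r$ and $0<t<\sqrt{r^2-|x-y|^2}\le r$. For a.e.\ $y$ the function $s\mapsto u(y,s)$ is absolutely continuous with $u(y,0^+)=0$ (by continuous vanishing on $\Delta(x,2r)$), so $u(y,t)=\int_0^t\partial_s u(y,s)\,ds$ and $|u(y,t)|^2\le t\int_0^t|\nabla u(y,s)|^2\,ds$; integrating in $t$ and then in $y$ and using $t<r$ yields $\iint_{T(x,r)}u^2\,dY\lesssim r^2\iint_{T(x,r)}|\nabla u|^2\,dY$. On the other hand, the ball $B((x,3r/4),r/8)$ lies inside $T(x,r)$ at distance $\gtrsim r$ from $\rn$ and from $\partial B(x,2r)$, and a Harnack chain from $(x,r)$ to its center stays inside $T(x,2r)$ at definite distance from $\partial T(x,2r)$, so interior Harnack gives $u\approx u(x,r)$ on that ball; hence $\iint_{T(x,r)}u^2\,dY\ge\iint_{B((x,3r/4),r/8)}u^2\,dY\gtrsim r^{n+1}u(x,r)^2$. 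Dividing by $|T(x,r)|\approx r^{n+1}$ and combining the two displays gives the lower bound.

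For the upper bound, test the equation $Lu=0$ with $\eta^2 u$, where $\eta$ is a cutoff that equals $1$ on $B(x,r)$, is supported in $B(x,3r/2)$, and satisfies $|\nabla\eta|\lesssim r^{-1}$; since $u$ vanishes on $\Delta(x,2r)\supset\Delta(x,3r/2)$, this is a valid test function and ellipticity together with the Cauchy--Schwarz inequality and absorption yield the boundary Caccioppoli estimate $\iint_{T(x,r)}|\nabla u|^2\,dY\lesssim r^{-2}\iint_{T(x,3r/2)}u^2\,dY\le r^{-2}|T(x,3r/2)|\,\big(\sup_{T(x,3r/2)}u\big)^2\lesssim r^{n-1}\big(\sup_{T(x,3r/2)}u\big)^2$. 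Since $u\ge 0$ solves $Lu=0$ in $T(x,2r)$ and vanishes continuously on $\Delta(x,2r)$, the Carleson estimate bounds $\sup_{T(x,3r/2)}u$ by $u$ at a corkscrew point for $\Delta(x,2r)$, i.e.\ at a point of height $\approx r$, and interior Harnack makes that value $\approx u(x,r)$. Hence $\iint_{T(x,r)}|\nabla u|^2\,dY\lesssim r^{n-1}u(x,r)^2$, which is the desired upper bound after dividing by $|T(x,r)|$.

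I do not expect a deep obstacle here; the only thing to watch is the bookkeeping of scales. The hypothesis furnishes a solution only on $T(x,2r)$ while the conclusion is posed on $T(x,r)$, so one must make sure the Caccioppoli cutoff, the Harnack chains, and (especially) the Carleson estimate all fit inside $T(x,2r)$ with constants depending only on $n$ and $\Lambda$: the Caccioppoli cutoff handles the spherical part of $\partial T(x,r)$ automatically, the Harnack chains are chosen to stay at distance $\gtrsim r$ from $\partial T(x,2r)$, and for the Carleson estimate one uses the version with a buffer (dominating $\sup$ over $T(x,3r/2)$ by a corkscrew value given the solution on $T(x,2r)$), which follows from the standard form by a covering argument. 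The genuinely structural inputs, and where the hypotheses are used, are the nonnegativity of $u$ (for the Harnack and Carleson estimates) and the continuous vanishing on $\Delta(x,2r)$ (for boundary Caccioppoli and the Poincar\'e step); dropping either destroys the two-sided comparison.
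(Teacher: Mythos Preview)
Your proof is correct. The paper does not actually prove this lemma: it simply quotes it as \cite[Lemma~2.8]{DLM} and moves on. Your approach is the standard self-contained one --- Poincar\'e along vertical lines plus interior Harnack for the lower bound, and boundary Caccioppoli plus the Carleson estimate plus Harnack for the upper bound --- and it uses exactly the same circle of ideas that underlies the cited result. The only point that merits any care, which you already flag, is fitting the Carleson estimate inside the available region $T(x,2r)$ so as to control $\sup_{T(x,3r/2)} u$; the covering/buffer argument you describe is the right fix, and the constants indeed depend only on $n$ and $\Lambda$. Compared to the paper's treatment, your write-up has the advantage of being self-contained, at the cost of a paragraph of bookkeeping that the citation avoids.
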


The following is a well-known estimate, often called the CFMS estimate.
\begin{lemma}[{\cite{CFMS}}]\label{CFMS.lem}
Let $L$ be a divergence form elliptic operator (on $\ree_+$). If $\hm^{X_0}_L$ is the elliptic measure for $L$ with pole $X_0 \in \ree_+$ and $G_L(X,Y)$ is the Green function for $L$ then
\[\frac{\hm^{X_0}_L(\Delta(x,r))}{|\Delta(x,r)|} \approx \frac{G_L(X_0, (x,r))}{r}\]
for $x \in \rn$ and $r > 0$, provided $X_0 \not\in T(x,2r)$. Here the implicit constants depend on $n$ and the ellipticity constant of $A$.
\end{lemma}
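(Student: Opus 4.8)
This is the classical CFMS estimate; my plan is to reproduce the argument of \cite{CFMS}, establishing the two inequalities by different mechanisms. Write $\Delta=\Delta(x,r)$, let $A_r=(x,r)$ be the corkscrew point (so $u(x,r)=u(A_r)$ in the notation of Lemma~\ref{engcs.lem}), and recall $|\Delta(x,r)|\approx r^n$ and $|T(x,r)|\approx r^{n+1}$.

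\textbf{Upper bound.} I would choose $\vp\in C_c^\infty$ with $1_\Delta\le\vp\le1$ and $\supp\vp$ in a dimensional dilate of $\Delta$, and extend it to $F(y,t)=\vp(y)\eta(t)$ with $\eta\in C_c^\infty$, $\eta\equiv1$ near $0$ and $\supp\eta$ short, so that $\supp F$ is contained in a dimensional dilate $T^{*}$ of $T(x,r)$ not containing $X_0$ (this is the role of the hypothesis, whose precise dilation constant is inessential) and $|\nabla F|\lesssim1/r$. Then $F(X_0)=0$, and the Riesz formula \eqref{Rieszform.eq} gives
\[
\hm^{X_0}_L(\Delta)\ \le\ \int\vp\,d\hm^{X_0}_L\ =\ -\iint A^T\nabla_Y G_L(X_0,Y)\cdot\nabla_Y F(Y)\,dY\ \lesssim\ \frac1r\iint_{T^{*}}|\nabla_Y G_L(X_0,Y)|\,dY.
\]
By Cauchy--Schwarz and then Lemma~\ref{engcs.lem}, applied to the nonnegative solution $G_L(X_0,\cdot)$ of $L^T$ (which vanishes on $\rn$ and has no pole near $T^{*}$ since $X_0$ lies outside), the last quantity is
\[
\lesssim\ \frac1r\,|T^{*}|^{1/2}\Big(\iint_{T^{*}}|\nabla_Y G_L(X_0,Y)|^2\,dY\Big)^{1/2}\ \lesssim\ \frac1r\,r^{(n+1)/2}\cdot r^{(n-1)/2}\,G_L(X_0,A_r)\ =\ r^{n-1}G_L(X_0,A_r),
\]
where Harnack is used to pass between comparable corkscrew points. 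Dividing by $|\Delta|\approx r^n$ gives the upper bound in the lemma.

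\textbf{Lower bound.} Here I would argue by the maximum principle. The map $X\mapsto\hm^X_L(\Delta)$ is a nonnegative solution of $L=-\div A\nabla$ in $\ree_+$, and for any fixed $Z\in\ree_+$ the map $X\mapsto G_L(X,Z)$ is a nonnegative solution of $L$ in $\ree_+\setminus\{Z\}$ vanishing on $\rn$ (this follows from the symmetry $G_L(X,Z)=G_{L^T}(Z,X)$ and the fact that $G_{L^T}(Z,\cdot)$ solves $(L^T)^T=L$). Take $Z=A^{*}:=(x,r/2)\in T(x,r)$ and compare $v(X):=G_L(X,A^{*})$ with $u(X):=\hm^X_L(\Delta)$ on $V:=\ree_+\setminus\overline{T(x,r)}$: on $V$ the function $v$ has no pole, both $u$ and $v$ vanish on $\rn\setminus\Delta$ and tend to $0$ at infinity, so it suffices to bound $v/u$ on the cap $\Sigma:=\partial B(x,r)\cap\ree_+$. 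There $|X-A^{*}|\approx r$, so $v(X)\le\Gamma_{L^T}(A^{*},X)\lesssim r^{1-n}$ by the standard fundamental-solution upper bound for the Green function ($\Gamma_{L^T}$ the fundamental solution of $L^T$); and $u(X)\gtrsim1$ on $\Sigma$, because the boundary H\"older estimate applied to the solution $X\mapsto\hm^X_L(\rn\setminus\Delta)$, which vanishes on $\Delta$, gives $\hm^X_L(\Delta)\ge\tfrac12$ on a Whitney region $T(x,cr)$, and a Harnack chain transports this to all of $\Sigma$ (the points of $\Sigma$ near the edge $\partial\Delta$ being covered by the nondegeneracy of elliptic measure at an interior point of its boundary data). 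Hence $v\le C r^{1-n}u$ on $\partial V$, the maximum principle propagates this to $X=X_0\in V$, and a final application of Harnack to replace $A^{*}$ by $A_r$ (a chain inside $T(x,3r/2)$, away from $X_0$) yields $G_L(X_0,A_r)\lesssim r^{1-n}\hm^{X_0}_L(\Delta)$, which, after dividing by $|\Delta|\approx r^n$, is the lower bound in the lemma.

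\textbf{Main obstacle.} The upper bound is bookkeeping once Lemma~\ref{engcs.lem} is in hand. The work is in the lower bound: verifying $\hm^X_L(\Delta)\gtrsim1$ \emph{uniformly} over the whole cap $\Sigma$ — that is, the nondegeneracy of elliptic measure near a boundary point lying in the interior of the surface ball whose measure is being estimated — and, relatedly, justifying the maximum-principle comparison on $V$ up to the part of $\partial V$ where $\rn$ meets $\Sigma$. These are standard facts about elliptic measure in a half-space (also from \cite{CFMS}), but they are the steps that demand the most care; everything else reduces to Cauchy--Schwarz, Harnack chains, the Riesz formula, and the elementary bound $G_L\le\Gamma_{L^T}$.
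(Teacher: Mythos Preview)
The paper does not give its own proof of this lemma; it is simply cited from \cite{CFMS} as a classical estimate. So there is no paper argument to compare against, and I evaluate your sketch on its own.

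Your upper bound is correct: the Riesz formula, Cauchy--Schwarz, and Lemma~\ref{engcs.lem} combine exactly as you describe.

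The lower bound has a gap. You compare $v(X)=G_L(X,A^{*})$ and $u(X)=\hm^X_L(\Delta)$ on $V=\ree_+\setminus\overline{T(x,r)}$ and assert $u\gtrsim1$ on the full cap $\Sigma$. Your justification---boundary H\"older for $1-u$ on $T(x,cr)$, then a Harnack chain out to all of $\Sigma$---breaks down near the edge $\partial\Delta\times\{0\}$: for $X=(y,t)\in\Sigma$ with $t$ small one has $\dist(y,\partial\Delta)=r-\sqrt{r^2-t^2}\approx t^2/(2r)\ll t$, so any Harnack chain from the interior to $X$ has unbounded length, and Bourgain-type nondegeneracy only yields $\hm^{(y,t)}_L(\Delta(y,Mt))\gtrsim1$ for a ball \emph{not} contained in $\Delta$. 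The parenthetical ``nondegeneracy of elliptic measure at an interior point of its boundary data'' is too vague to close this. (The inequality $u\gtrsim1$ on $\Sigma$ may well be true---for the Laplacian one can check $u\to\tfrac12$ along $\Sigma$ at the edge---but it requires an argument you have not supplied, and it is not how \cite{CFMS} proceeds.)

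The classical route sidesteps the edge entirely by changing the comparison region: take $\Omega':=\ree_+\setminus\overline{B(A^{*},r/10)}$ instead of $V$. Then $\partial\Omega'=\rn\cup\partial B(A^{*},r/10)$; on $\rn$ one has $v=0\le Cr^{1-n}u$ trivially, while every point of $\partial B(A^{*},r/10)$ has height in $[2r/5,3r/5]$ and lies within a bounded Harnack chain of $A^{*}$, so there $v\lesssim r^{1-n}$ (Green function upper bound) and $u\gtrsim1$ (Bourgain plus Harnack). The maximum principle in $\Omega'$ then gives $v(X_0)\le Cr^{1-n}u(X_0)$ directly, with no edge analysis needed.
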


Combining the previous two lemmas we obtain the following.

\begin{lemma}\label{CFMSeng.lem}
Let $L$ be a divergence form elliptic operator (on $\ree_+$). If $\hm^{X_0}_L$ is the elliptic measure for $L$ with pole $X_0 \in \ree_+$ and $G_L(X,Y)$ is the Green function\footnote{Recall $G_L(X_0, \cdot)$ satisfies \eqref{Rieszform.eq} and $L^T G_L(X, \cdot) = \delta_X$.} for $L$  then
\[\frac{\hm^{X_0}_L(\Delta(x,r))}{|\Delta(x,r)|} \approx \left(\fiint_{T(x,r)} |\nabla_Y G_L(X_0, Y)|^2\, dY \right)^{1/2}\]
for $x \in \rn$ and $r > 0$, provided $X_0 \not\in T(x,4r)$. Here the implicit constants depend on $n$ and the ellipticity constant of $A$. More generally 
\[\frac{\hm^{X_0}_L(\Delta(x,r))}{|\Delta(x,r)|} \approx_M \left(\fiint_{T(x,Mr)} |\nabla_Y G_L(X_0, Y)|^2\, dY \right)^{1/2}\]
for $x \in \rn$ and $r > 0$, provided $X_0 \not\in T(x,4Mr)$, the implicit constants depend on $M$, $n$ and the ellipticity constant of $A$.
\end{lemma}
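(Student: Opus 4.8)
The plan is to deduce this by applying Lemma~\ref{engcs.lem} directly to the Green function $u(Y):=G_L(X_0,Y)$ and then matching the output against Lemma~\ref{CFMS.lem}. Fix $x\in\rn$, $r>0$ with $X_0\notin T(x,4r)$. By Definition~\ref{def:Green}, $u=G_L(X_0,\cdot)$ is a non-negative weak solution of $L^T u=0$ in $\ree_+\setminus\{X_0\}$ that vanishes continuously on $\rn$. Since $X_0\in\ree_+$ and $X_0\notin B(x,4r)$, we have $\dist(X_0,\overline{T(x,2r)})\ge 2r$, so a standard (interior and boundary) Caccioppoli estimate gives $u\in W^{1,2}(T(x,2r))$, and $u$ vanishes continuously on $\Delta(x,2r)$. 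The operator $L^T=-\div A^T\nabla$ is again divergence-form elliptic with the same ellipticity constant, so Lemma~\ref{engcs.lem} applies to $L^T$ and $u$.

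First I would record the two comparisons this produces. Lemma~\ref{engcs.lem} gives $\fiint_{T(x,r)}|\nabla_Y G_L(X_0,Y)|^2\,dY\approx G_L(X_0,(x,r))^2/r^2$, hence, taking square roots, $\big(\fiint_{T(x,r)}|\nabla_Y G_L(X_0,Y)|^2\,dY\big)^{1/2}\approx G_L(X_0,(x,r))/r$. On the other hand, since $X_0\notin T(x,2r)$, Lemma~\ref{CFMS.lem} gives $\hm^{X_0}_L(\Delta(x,r))/|\Delta(x,r)|\approx G_L(X_0,(x,r))/r$. Chaining the two comparisons through $G_L(X_0,(x,r))/r$ yields the first assertion, with implicit constants depending only on $n$ and the ellipticity constant.

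For the statement with a general $M\ge1$ and $X_0\notin T(x,4Mr)$, I would apply the case just proved with $r$ replaced by $Mr$ (legitimate since $X_0\notin T(x,4Mr)$), giving $\big(\fiint_{T(x,Mr)}|\nabla_Y G_L(X_0,Y)|^2\,dY\big)^{1/2}\approx \hm^{X_0}_L(\Delta(x,Mr))/|\Delta(x,Mr)|$. It then only remains to pass from $\hm^{X_0}_L(\Delta(x,Mr))/|\Delta(x,Mr)|$ to $\hm^{X_0}_L(\Delta(x,r))/|\Delta(x,r)|$ up to an $M$-dependent constant. This follows from Lemma~\ref{CFMS.lem} applied at scales $r$ and $Mr$ (both permitted, as $2r\le 2Mr\le 4Mr$) together with Harnack's inequality: the points $(x,r)$ and $(x,Mr)$ are joined by a Harnack chain of length $\sim\log M$ contained in $\ree_+\setminus\{X_0\}$, because along the segment $\{(x,t):r\le t\le Mr\}$ the distance to $\rn$ equals $t$ while the distance to $X_0$ is $\gtrsim Mr\ge t$; hence $G_L(X_0,(x,Mr))\approx_M G_L(X_0,(x,r))$ and so $\hm^{X_0}_L(\Delta(x,Mr))/|\Delta(x,Mr)|\approx_M \hm^{X_0}_L(\Delta(x,r))/|\Delta(x,r)|$.

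I do not anticipate a real obstacle here, as the argument is essentially bookkeeping with the two cited lemmas. The only point requiring a little care is verifying the hypotheses of Lemma~\ref{engcs.lem} for $u=G_L(X_0,\cdot)$ — namely its membership in $W^{1,2}(T(x,2r))$ and continuous vanishing on $\Delta(x,2r)$ — which is precisely where the separation condition $X_0\notin T(x,4r)$ enters, together with the harmless observation that Lemma~\ref{engcs.lem}, although stated for $L$, applies verbatim to $L^T$ since $A^T$ is elliptic with the same constant.
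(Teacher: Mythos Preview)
Your proof is correct and follows essentially the same route as the paper: combine Lemma~\ref{engcs.lem} (applied to $u=G_L(X_0,\cdot)$ as a nonnegative $L^T$-solution vanishing on $\Delta(x,2r)$) with Lemma~\ref{CFMS.lem}, and for the general $M$ statement use the Harnack inequality to compare the Green function (equivalently, the elliptic measure density) at scales $r$ and $Mr$. The extra care you take in checking the hypotheses (the $W^{1,2}$ membership via Caccioppoli, the separation $X_0\notin T(x,4r)$, and that Lemma~\ref{engcs.lem} applies verbatim with $L^T$) is exactly what the paper leaves implicit.
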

In the previous lemma, the second estimate follows from the first, Lemma \ref{engcs.lem} and the Harnack inequality (applied to $G(X_0,\cdot)$ a solution to $L^T u = 0$ away from $X_0$).  We also have the following doubling property for harmonic measure, which can be deduced from Lemma \ref{CFMS.lem} and the Harnack inequality. 

\begin{lemma}\label{hmdoub.lem}
Let $L$ be a divergence form elliptic operator (on $\ree_+$). If $\hm^{X_0}_L$ is the elliptic measure for $L$ with pole $X_0 \in \ree_+$ then
\[\hm_L^{X_0}(\Delta(x,2r)) \lesssim \hm^{X_0}_L(\Delta(x,r)) \]
provided that $X_0 \not\in T(x,4r)$. Here the implicit constants depend on $n$ and the ellipticity constant of $A$.
\end{lemma}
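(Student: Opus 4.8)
The plan is to deduce the doubling property from the CFMS estimate (Lemma~\ref{CFMS.lem}) together with the interior Harnack inequality for the transpose operator $L^T$, following the classical argument. First I would apply Lemma~\ref{CFMS.lem} at the two scales $r$ and $2r$: since by hypothesis $X_0 \notin T(x,4r)$, we certainly also have $X_0 \notin T(x,2\cdot 2r) = T(x,4r)$, so the hypothesis of the CFMS estimate is met for the surface ball $\Delta(x,2r)$ as well as for $\Delta(x,r)$. This gives
\[
\frac{\hm^{X_0}_L(\Delta(x,2r))}{|\Delta(x,2r)|} \approx \frac{G_L(X_0,(x,2r))}{2r}, \qquad
\frac{\hm^{X_0}_L(\Delta(x,r))}{|\Delta(x,r)|} \approx \frac{G_L(X_0,(x,r))}{r}.
\]
Since $|\Delta(x,2r)| = 2^n |\Delta(x,r)|$, dividing the first relation by the second reduces the claim to showing $G_L(X_0,(x,2r)) \lesssim G_L(X_0,(x,r))$ with a constant depending only on $n$ and the ellipticity constant.

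Next I would obtain the comparison of the two Green function values by a Harnack chain argument. The function $Y \mapsto G_L(X_0,Y)$ is a non-negative weak solution of $L^T u = 0$ in $\ree_+ \setminus \{X_0\}$ (as recorded in Definition~\ref{def:Green} and the footnote to Lemma~\ref{CFMSeng.lem}). The two points $(x,r)$ and $(x,2r)$ both lie at height comparable to $r$ above the boundary and at mutual distance comparable to $r$; moreover, since $X_0 \notin T(x,4r)$, one can connect $(x,2r)$ to $(x,r)$ by a chain of a bounded number (depending only on $n$) of balls, each of radius comparable to $r$, each contained in $\ree_+$ and staying a definite distance away from both the boundary $\rn$ and the pole $X_0$; for instance one may pass through the point $(x', 2r)$ for a suitable $x'$ or simply move vertically and then horizontally. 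Applying the interior Harnack inequality for $L^T$ along this chain yields $G_L(X_0,(x,2r)) \approx G_L(X_0,(x,r))$ with the stated dependence of constants, which is more than enough.

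The argument has essentially no obstacle; the only point requiring mild care is verifying that the Harnack chain connecting $(x,r)$ and $(x,2r)$ can be chosen to avoid the pole $X_0$ using only the hypothesis $X_0 \notin T(x,4r)$. Since every ball in the chain can be taken inside $T(x,4r)$ (indeed inside a slightly enlarged Whitney-type region over $\Delta(x,2r)$), and $X_0$ lies outside $T(x,4r)$, this is automatic, and the number of balls and their radii relative to $\dist(\cdot,\partial)$ are controlled purely by dimension. This completes the proof.
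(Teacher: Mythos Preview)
Your proposal is correct and follows exactly the route the paper indicates: the paper does not give a detailed proof but simply remarks that the doubling property ``can be deduced from Lemma~\ref{CFMS.lem} and the Harnack inequality,'' which is precisely what you do. Your care in checking that the Harnack chain between $(x,r)$ and $(x,2r)$ stays inside $T(x,4r)$ (hence avoids the pole $X_0$) is the only nontrivial verification, and it is handled correctly.
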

Notice the previous lemma does not give {\it global} doubling of the measure. The lemma only gives local doubling, up to the scale of the distance from the pole to the boundary. Later we would like to work with ``the" Green function and elliptic measure with pole at infinity and we introduce them with the following lemma, which can be proved just as in \cite[Corollary 3.2]{KT-ann}.

\begin{lemma}[Green function and elliptic measure at infinity]\label{def:Greeninf}
Let $L$ be a divergence form elliptic operator (on $\ree_+$) with Green function $G_L(X,Y)$. Define the sequence of functions $u_k: \overline{\ree_+} \to \re$ by 
\[u_k(Y) := \frac{G_L((0,2^{k}), Y)}{G_L((0,2^{k}), (0,1))},\]
where we have extended $u_k$ to the boundary by zero ($u_k(y,0) = 0, \ \forall y \in \rn$). There exists a subsequence $u_{k_j}$ such that $u_{k_j}$ converges uniformly on compact subsets of $\overline{\ree_+}$ to a function $U$ with the following properties.
\begin{itemize}
\item $U(y,0) = 0$ for all $y \in \rn$.
\item $U(0,1) = 1$.
\item $U(Y) > 0$ for all $Y \in \ree_+$.
\item $U \in C(\overline{\ree_+})$.
\item $U$ solves $L^T U = 0$ in $\ree_+$.
\end{itemize}
Moreover, there exists a locally finite measure $\hm^\infty_L$ on $\rn$ with
\[\frac{1}{G((0,2^{k_j}),(0,1))} \hm^{(0,2^{k_j})}  \rightharpoonup   \hm^\infty_L \]
such that the following Riesz formula holds
\begin{equation}\label{Rieszformulainfty.eq}
\int_{\rn} f(y) \, d\hm^\infty_L(y) = - \iint A^T(y,s)\nabla_{y,s}U(y,s) \cdot \nabla_{y,s} F(y,s) \, dy \, ds,
\end{equation}
whenever $f \in C_c^\infty(\rn)$ and $F \in C_c^\infty(\ree)$ are such that $F(y,0) = f(y)$
We call $\hm^\infty_L$ the {\bf elliptic measure with pole at infinity} and $U$ the {\bf Green function with pole at infinity}.
\end{lemma}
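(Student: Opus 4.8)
The plan is to run the standard normal-families / compactness argument, keeping careful track of which classical estimates are available in the present DKP setting. First I would fix the normalization point $(0,1)$ and set $v_k := G_L((0,2^k),\cdot)$, $u_k := v_k / v_k(0,1)$, each extended by zero to the boundary. The key input is that $u_k$ is a nonnegative solution of $L^T u = 0$ in $\ree_+ \setminus \{(0,2^k)\}$ which vanishes continuously on $\rn$ (recall from Definition \ref{def:Green} that $G_L(X,\cdot)$ has these properties, and by Remark \ref{AvsAt.rmk} the matrix $A^T$ is again $\Lambda$-elliptic, so all the classical tools apply to $L^T$). For any fixed compact set $K \subset \overline{\ree_+}$ and all $k$ large enough that $(0,2^k) \notin 2K \cap \ree_+$, the functions $u_k$ are uniformly bounded on $K$: this follows by covering $K$ with Carleson/Whitney regions, using Lemma \ref{CFMS.lem} (or rather the comparison/Harnack chain it rests on) to compare $u_k(Y)$ with $u_k$ evaluated along a Harnack chain to $(0,1)$, where it equals $1$; boundary regions are handled by boundary Hölder continuity for solutions vanishing on $\Delta$, which degenerates to zero as one approaches $\rn$. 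Interior Hölder estimates (De Giorgi–Nash–Moser, valid since $A^T$ is merely bounded measurable and elliptic) then give equicontinuity on compact subsets of $\overline{\ree_+}$, with the boundary equicontinuity again coming from the boundary Hölder estimate. By Arzelà–Ascoli and a diagonal argument over an exhaustion of $\overline{\ree_+}$ by compacta, extract a subsequence $u_{k_j} \to U$ uniformly on compacta.

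Next I would verify the listed properties of $U$. The identities $U(y,0)=0$ and $U(0,1)=1$ pass to the limit immediately; $U \in C(\overline{\ree_+})$ is automatic from locally uniform convergence of continuous functions. That $U$ solves $L^T U = 0$ in $\ree_+$ follows because weak solutions are stable under locally uniform limits: for a fixed $F \in C_c^\infty(\ree_+)$ the singularity $(0,2^{k_j})$ eventually leaves $\supp F$, so $\iint A^T \nabla u_{k_j}\cdot \nabla F = 0$; local Caccioppoli bounds give $\nabla u_{k_j} \rightharpoonup \nabla U$ weakly in $L^2_{loc}$, and one passes to the limit. Strict positivity $U(Y)>0$ on $\ree_+$ then follows from the strong maximum principle / Harnack: $U \ge 0$, $U$ is not identically zero since $U(0,1)=1$, so $U > 0$ everywhere in the connected open set $\ree_+$.

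For the measure at infinity, I would use the Riesz formula \eqref{Rieszform.eq} for each pole $(0,2^{k_j})$: for $f \in C_c^\infty(\rn)$ and $F \in C_c^\infty(\ree)$ with $F(y,0)=f(y)$, once $(0,2^{k_j})$ lies outside $\supp F$ we have $\int f \, d\big(\tfrac{1}{v_{k_j}(0,1)}\hm^{(0,2^{k_j})}\big) = -\iint A^T \nabla u_{k_j} \cdot \nabla F$. The right side converges to $-\iint A^T \nabla U \cdot \nabla F$ by the weak $L^2_{loc}$ convergence of the gradients. Hence the linear functionals $f \mapsto \int f\, d\big(\tfrac{1}{v_{k_j}(0,1)}\hm^{(0,2^{k_j})}\big)$ converge for every such $f$, defining a nonnegative (since each $\hm^{(0,2^{k_j})}$ is a positive measure) linear functional on $C_c^\infty(\rn)$; by Riesz representation this is integration against a locally finite Borel measure $\hm^\infty_L$, and weak-$*$ convergence $\tfrac{1}{v_{k_j}(0,1)}\hm^{(0,2^{k_j})} \rightharpoonup \hm^\infty_L$ follows by density of $C_c^\infty$ in $C_c$. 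Local finiteness of $\hm^\infty_L$ can also be seen directly: by Lemma \ref{CFMS.lem}, $\tfrac{1}{v_{k_j}(0,1)}\hm^{(0,2^{k_j})}(\Delta(x,r)) \approx r^{n-1} u_{k_j}(x,r) \to r^{n-1} U(x,r) < \infty$ uniformly for $\Delta(x,r)$ in a compact family once $k_j$ is large. This also reconciles the two descriptions of $\hm^\infty_L$ and shows it is nontrivial. The main obstacle — really the only delicate point — is establishing the \emph{uniform} (in $k$) local bounds and equicontinuity for $u_k$ up to the boundary; this is where one must invoke, carefully and with the pole escaping to infinity, the CFMS-type comparison of Lemma \ref{CFMS.lem}, boundary Harnack, and boundary Hölder continuity for solutions vanishing on a portion of $\rn$, all of which hold for general bounded measurable elliptic coefficients and in particular for our weak-DKP operators.
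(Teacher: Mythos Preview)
The paper does not prove this lemma; it merely states that it ``can be proved just as in \cite[Corollary 3.2]{KT-ann}.'' Your sketch is precisely the standard normal-families/compactness argument behind that reference, and the details you give (uniform local bounds via Harnack chains and boundary H\"older continuity, Arzel\`a--Ascoli plus diagonalization, Caccioppoli to pass gradients to the weak limit, Harnack for strict positivity, and the Riesz formula to identify $\hm^\infty_L$) are all correct and in the right order.
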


The estimates for $G$ in Lemma \ref{CFMSeng.lem} hold for $U$ (globally) and the measure $\hm^\infty_L$ is {\it globally} doubling. We summarize these facts in the following Lemma. 
\begin{lemma}\label{gfinftyfacts.lemma}
Let $L$ be a divergence form elliptic operator (on $\ree_+$). Let $U$ be the Green function with pole at infinity and $\hm^\infty_L$ is the elliptic measure with pole at infinity then the following hold:
\begin{itemize}
\item For $x \in \rn$ and $r > 0$
\[\frac{\hm^\infty_L (\Delta(x,r))}{|\Delta(x,r)|} \approx_M  \left(\fiint_{T(x,Mr)} |\nabla U(Y)|^2\, dY \right)^{1/2},\]
where the implicit constants depend on $M$, $n$ and the ellipticity constant of $A$.
\item For $x \in \rn$ and $r > 0$ it holds $\hm^\infty_L(\Delta(x,2r))\lesssim \hm^\infty_L(\Delta(x,r))$, where the implicit constant depends on $n$ and the ellipticity of $A$.
\end{itemize}
\end{lemma}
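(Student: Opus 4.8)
The plan is to transfer the finite-pole estimates of Lemma \ref{CFMSeng.lem} and the local doubling of Lemma \ref{hmdoub.lem} to the pole-at-infinity objects by means of the approximation scheme in Lemma \ref{def:Greeninf}, exploiting that all the implicit constants in those lemmas depend only on $n$ and the ellipticity of $A$ and \emph{not} on the location of the pole, so that the only role of the hypothesis ``$X_0 \notin T(x,4Mr)$'' is to keep the pole away from the relevant Carleson region. Since the poles $(0,2^{k_j})$ escape to infinity, for any fixed $x \in \rn$ and $r > 0$ we have $(0,2^{k_j}) \notin T(x,4Mr)$ for all $j$ large, so each estimate in Lemma \ref{CFMSeng.lem} and Lemma \ref{hmdoub.lem} applies to $G_L\big((0,2^{k_j}),\cdot\big)$ with a \emph{uniform} constant, and it remains to pass to the limit.

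For the first bullet, fix $M$, $x$, $r$. Write $c_j := G_L\big((0,2^{k_j}),(0,1)\big)$ and $u_j := c_j^{-1} G_L\big((0,2^{k_j}),\cdot\big) \to U$ uniformly on compact subsets of $\overline{\ree_+}$. Dividing both sides of the (large-$M$ form of the) estimate in Lemma \ref{CFMSeng.lem} by $c_j$ and using that $u_j \to U$ in $C(\overline{T(x,Mr)})$, the right-hand side $\big(\fiint_{T(x,Mr)} |\nabla u_j|^2\big)^{1/2}$ converges to $\big(\fiint_{T(x,Mr)} |\nabla U|^2\big)^{1/2}$; here I would invoke interior Caccioppoli/Cameron-type gradient bounds together with uniform convergence of the $u_j$ to get local $W^{1,2}$ (indeed $L^2$-gradient) convergence on the slightly larger region (shrinking $Mr$ a hair, or simply noting that the estimate in Lemma \ref{CFMSeng.lem} is stable under the constant $M$ being replaced by $M+1$). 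On the left-hand side, $c_j^{-1}\hm^{(0,2^{k_j})}_L(\Delta(x,r)) \to \hm^\infty_L(\Delta(x,r))$ by the weak-$*$ convergence in Lemma \ref{def:Greeninf}, after the usual inner/outer regularization of the surface ball $\Delta(x,r)$ — i.e. squeezing $\Delta(x,r)$ between an open set and a compact set both of comparable measure, and using the doubling (obtained below, or directly the doubling of each $\hm^{(0,2^{k_j})}_L$) to show $\hm^\infty_L(\partial \Delta(x,r)) = 0$ for a.e.\ $r$, then adjusting $r$ slightly if necessary. Combining the two limits gives the claimed comparability $\frac{\hm^\infty_L(\Delta(x,r))}{|\Delta(x,r)|} \approx_M \big(\fiint_{T(x,Mr)}|\nabla U|^2\big)^{1/2}$ with the same constants as in Lemma \ref{CFMSeng.lem}.

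For the second bullet, apply Lemma \ref{hmdoub.lem} to $\hm^{(0,2^{k_j})}_L$ at the surface ball $\Delta(x,r)$: for $j$ large the pole satisfies $(0,2^{k_j}) \notin T(x,4r)$, hence $\hm^{(0,2^{k_j})}_L(\Delta(x,2r)) \lesssim \hm^{(0,2^{k_j})}_L(\Delta(x,r))$ with a constant depending only on $n$ and the ellipticity of $A$. Dividing by $c_j$ and passing to the weak-$*$ limit (again handling the boundaries of the two balls by the inner/outer regularization, using the uniform doubling along the sequence to control boundary mass) yields $\hm^\infty_L(\Delta(x,2r)) \lesssim \hm^\infty_L(\Delta(x,r))$ for all $x$ and $r$, i.e.\ \emph{global} doubling, since $x$ and $r$ were arbitrary and the pole-at-infinity has no finite obstruction. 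The main obstacle I anticipate is the bookkeeping around passing limits through sets with boundary: one must ensure no mass of $\hm^\infty_L$ concentrates on the spheres $\partial\Delta(x,r)$ and that the $L^2$-norms of $\nabla u_j$ genuinely converge (not merely are bounded) — both are standard once one has the uniform doubling along the sequence and uniform interior energy estimates, but they are exactly the points where a little care is needed. This is the same argument used in \cite[Corollary 3.2]{KT-ann} and I would simply cite that template.
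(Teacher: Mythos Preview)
Your proposal is correct and is the natural way to supply the details the paper omits: in the paper this lemma is simply asserted as a summary (``The estimates for $G$ in Lemma \ref{CFMSeng.lem} hold for $U$ (globally) and the measure $\hm^\infty_L$ is globally doubling''), with no proof given beyond the implicit suggestion that one passes the finite-pole estimates through the limiting construction of Lemma \ref{def:Greeninf}, exactly as you do and as in \cite[Corollary 3.2]{KT-ann}.

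One minor streamlining worth noting: for the first bullet you can bypass some of the weak-$*$/gradient-convergence bookkeeping by applying Lemma \ref{engcs.lem} \emph{directly} to $U$ (which is a nonnegative solution of $L^T U = 0$ vanishing on the boundary, so the hypotheses are met globally) to get $\big(\fiint_{T(x,r)}|\nabla U|^2\big)^{1/2} \approx U(x,r)/r$, and then establish the CFMS-type comparison $\hm^\infty_L(\Delta(x,r))/|\Delta(x,r)| \approx U(x,r)/r$ either by passing to the limit in Lemma \ref{CFMS.lem} (only pointwise values of $u_j$ and measures of balls are involved, so less is needed than full $W^{1,2}$ convergence) or directly from the Riesz formula \eqref{Rieszformulainfty.eq} with a suitable bump function. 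This avoids having to argue $\nabla u_j \to \nabla U$ in $L^2$ on Carleson boxes. Either way the content is the same as what you wrote.
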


Next, we need to define some local energies as in \cite{DLM} (we define some additional objects as well).

\begin{definition}[Local energies]\label{localenergies.def}
Let $x \in \mathbb{R}^n$ and $r > 0$. Suppose $u \in W^{1,2}(T(x,r))$ and $\iint_{T(x,r)} |\nabla u|^2 \, dz \, dt$ is non-zero. Define
\[E_u(x,r) := \fiint_{T(x,r)} |\nabla u|^2 \, dz \, dt,\] 
and for $i = 1, \dots, n$
\[E_{u,i} (x,r) := \fiint_{T(x,r)} |\partial_{x_i} u|^2 \, dz \, dt.\]
Let
\[\lambda(x,r) := \fiint_{T(x,r)} \partial_{t} u \, dz \, dt. \]
We define, as in \cite{DLM},
\[J_u(x,r) = \fiint_{T(x,r)} |(\nabla_{x,t} u) - \lambda(x,r) e_{n+1}|^2 \, dz, dt,\]
which essentially measures how far $\nabla_{x,t}u $ is from its vertical component,
and define
\[\beta_u(x,r):= \frac{J_u(x,r)}{E_u(x,r)}.\]
A simple computation shows that 
\begin{equation}\label{betais.eq}
\beta_{u,i}(x,r):= \frac{E_{u,i}(x,r)}{E_u(x,r)} \le \beta_u(x,r).
\end{equation}
\end{definition}

We will need the following main theorem proven in \cite{DLM}.

\begin{theorem}[{\cite[Theorem 1.15]{DLM}}]\label{DLMeng.thrm}
Let $x_0 \in \mathbb{R}^n$ and $R > 0$ and $A$ be a $\Lambda$-elliptic matrix satisfying the weak DKP condition on $T(x_0,R)$. Suppose $u$ is a positive solution to $L u := -\div A \nabla u = 0$ in $T(x_0, R)$ and $u$ vanishes on $\Delta(x_0,R)$. Then for any $\tau \in (0,1/20]$ it holds
\[\left\|\beta_u(x,r) \, \frac{dx\, dr}{r}\right\|_{\mathcal{C}(\Delta(x_0,\tau R))} \le C\left(\tau^\eta  + \left\|\alpha_2(x,r)^2 \, \frac{dx\, dr}{r}\right\|_{\mathcal{C}(\Delta(x_0, R))}\right),\]
where the constants $C$ and $\eta$ depend only on $\Lambda$ and $n$.
\end{theorem}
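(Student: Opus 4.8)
The plan is to follow the strategy of \cite{DLM}: reduce the Carleson estimate to a density bound on a single surface ball, dominate $\beta_u$ by a sum of a quantity governed by a constant-coefficient \emph{model} problem (which enjoys power-type decay) and a quantity governed by $\alpha_2$, and then synthesize the two across scales. For the reduction, note that it suffices to fix $\Delta=\Delta(x,s)\subseteq\Delta(x_0,\tau R)$ and bound $|\Delta|^{-1}\iint_{T_\Delta}\beta_u(y,r)\,\tfrac{dy\,dr}{r}$. A one-line computation gives $\beta_u=J_u/E_u=1-\lambda^2/E_u$, and by Lemma \ref{engcs.lem} $E_u(y,r)\approx u(y,r)^2/r^2$; thus $\beta_u(y,r)$ is small exactly when the vertical average $\lambda(y,r)$ of $\partial_t u$ carries almost all of the energy on $T(y,r)$, i.e.\ when $u$ is close to an affine multiple of the height $t$ there. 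Since $u>0$ vanishes on $\Delta(x_0,R)$, the boundary Harnack principle together with Lemma \ref{engcs.lem} lets us normalize $\fiint_{T(x_0,R)}|\nabla u|^2\approx 1$ and guarantees $\lambda>0$ near the boundary.

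\textbf{The constant-coefficient model.} Let $L_0=-\div A_0\nabla$ with $A_0\in\mathfrak A(\Lambda)$ constant, and let $v>0$ solve $L_0v=0$ in $T(x_0,R)$ with $v=0$ on $\Delta(x_0,R)$. A linear change of variables adapted to $A_0$ turns $L_0$ into $-\Delta$ and maps this configuration to a positive harmonic function $\tilde v$ vanishing on a boundary hyperplane; by the boundary Harnack principle and interior estimates for harmonic functions, $\tilde v(z,t)/t$ extends to a positive $C^\infty$ function $h$ with $\sup_{\Delta(x_0,R/2)}h\lesssim\inf_{\Delta(x_0,R/2)}h$ and $|\nabla h|\lesssim h/R$ on $\Delta(x_0,R/2)$. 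Hence $\nabla\tilde v=h\,e_{n+1}+t\,\nabla h+(\text{lower order})$, so $\beta_v(y,r)\approx r^2|\nabla\log h(y)|^2\lesssim (r/R)^2$, and undoing the change of variables one gets $\|\beta_v\,\tfrac{dx\,dr}{r}\|_{\mathcal C(\Delta(x_0,\tau R))}\lesssim\tau^{2}$. For merely $\Lambda$-elliptic (non-constant) coefficients the smoothness of $h$ must be replaced by the \emph{quantitative} boundary Harnack principle, which only yields a H\"older modulus with exponent $\eta=\eta(n,\Lambda)$; this is the source of the exponent $\tau^\eta$ rather than $\tau^2$ in the statement.

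\textbf{Perturbation and synthesis.} At a Whitney scale $T(y,2r)$, freeze $A$ to a best constant approximant $A_0(y,r)$ (realizing the infimum defining $\gamma(y,2r)$) and let $u_0$ be the $L_0$-solution with the boundary data of $u$; the difference $w=u-u_0$ satisfies $L_0w=\div\big((A_0-A)\nabla u\big)$, and a careful energy estimate—using the self-improving higher integrability of $\nabla u$ and Lemma \ref{engcs.lem}—controls the contribution of $w$ to $\beta_u$ on $T_\Delta$ by the Carleson mass of $\gamma(\cdot,\cdot)^2\,\tfrac{dx\,dr}{r}$, which by Lemma \ref{gammabdalpha.lem} is dominated by $\|\alpha_2^2\,\tfrac{dx\,dr}{r}\|_{\mathcal C(\Delta(x_0,R))}$. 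One then runs a stopping-time decomposition of $\Delta(x_0,\tau R)$ into coherent regions on which $A$ stays within a small universal threshold of a fixed constant matrix, applies the model estimate above on each region together with the perturbation bound, and sums, using that the coherent regions' top cubes pack with total surface measure $\lesssim\big(1+\|\alpha_2^2\,\tfrac{dx\,dr}{r}\|_{\mathcal C(\Delta(x_0,R))}\big)|\Delta(x_0,\tau R)|$.

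\textbf{Main obstacle.} The delicate point—and the technical heart of \cite{DLM}—is precisely the synthesis step: the constant-coefficient comparison contributes an $O(1)$ amount near the top of each coherent region, so a naive corona gives only $\|\beta_u\,\tfrac{dx\,dr}{r}\|_{\mathcal C}\lesssim 1+\|\alpha_2^2\,\tfrac{dx\,dr}{r}\|_{\mathcal C}$; to upgrade the leading constant all the way down to $\tau^\eta$ one must exploit the scale-by-scale decay of the model estimate simultaneously at every scale—e.g.\ via Rellich/Green-type integration-by-parts identities combined with a Caccioppoli-type self-improvement—rather than region by region. Obtaining the energy comparison in the perturbation step with the correct (un-lossy) power of $\gamma$, and the quantitative boundary Harnack estimate with a solution-independent exponent $\eta(n,\Lambda)$, are the two further points that require genuine care.
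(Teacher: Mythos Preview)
The paper does not contain a proof of this statement: Theorem~\ref{DLMeng.thrm} is quoted from \cite[Theorem 1.15]{DLM} and used as a black box, so there is no ``paper's own proof'' to compare against. Your write-up is therefore not a reproof of something in the present paper but an attempted summary of the argument in \cite{DLM}.

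As a summary of \cite{DLM}, your outline is in the right spirit but diverges in an important structural way. The actual mechanism in \cite{DLM} is a \emph{scale-by-scale decay iteration} for $J_u$: one proves a one-step estimate of the form
\[
J_u(x,\theta r)\ \le\ C\theta^{a}\,J_u(x,r)\ +\ C\,\gamma(x,r)^2\,E_u(x,r)
\]
for a fixed small $\theta$ (via comparison with a constant-coefficient solution on $T(x,r)$ and Caccioppoli), and then iterates this recursion down the scales, summing the geometric series to obtain the Carleson bound with the $\tau^\eta$ gain. There is no corona/stopping-time decomposition into coherent regions. Your own ``Main obstacle'' paragraph correctly diagnoses why a corona argument would lose the $\tau^\eta$---the $O(1)$ contribution at the top of each region---and that is precisely why \cite{DLM} does not proceed that way. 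So the synthesis step you propose is not the route that works; the pointwise geometric decay of $J_u$ with an additive $\gamma^2 E_u$ error, followed by iteration, is.

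A second point: your constant-coefficient paragraph overstates what is available. After the linear change of variables, $\tilde v$ is a positive harmonic function vanishing on a hyperplane, but the relevant estimate in \cite{DLM} is not a global $\beta_v\lesssim (r/R)^2$ coming from smoothness of $h=\tilde v/t$; rather it is the one-step decay $J_v(x,\theta r)\le C\theta^{a}J_v(x,r)$, obtained from boundary regularity for constant-coefficient operators. The exponent $\eta$ in the theorem then emerges from the iteration, not from a separate ``quantitative boundary Harnack with solution-independent exponent'' as you suggest.
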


As was observed above in \eqref{betais.eq}, $\beta_{u,i}(x,r) \le \beta_u(x,r)$, giving the following corollary.

\begin{corollary}\label{xengy.cor}
Let $x_0 \in \mathbb{R}^n$ and $R > 0$ and $A$ be a $\Lambda$-elliptic matrix satisfying the weak DKP condition on $T(x_0,R)$. Suppose $u$ is a positive solution to $L u := -\div A \nabla u = 0$ in $T(x_0, R)$ and $u$ vanishes on $\Delta(x_0,R)$. Then for any $\tau \in (0,1/20]$ it holds
\[\left\|\beta_{u,i}(x,r) \, \frac{dx\, dr}{r}\right\|_{\mathcal{C}(\Delta(x_0,\tau R))} \le C\left(\tau^\eta  + \left\|\alpha_2(x,r)^2 \, \frac{dx\, dr}{r}\right\|_{\mathcal{C}(\Delta(x_0, R))}\right),\]
where the constants $C$ and $\eta$ depend only on $\Lambda$ and $n$.
\end{corollary}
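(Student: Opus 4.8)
The plan is to deduce this directly from Theorem \ref{DLMeng.thrm} by a pointwise domination of measures. The key observation, already recorded in \eqref{betais.eq}, is that for each $i = 1, \dots, n$ one has $\beta_{u,i}(x,r) \le \beta_u(x,r)$ at every point $(x,r) \in T(x_0,R)$. Consequently the (nonnegative) Borel measures satisfy $\beta_{u,i}(x,r)\,\tfrac{dx\,dr}{r} \le \beta_u(x,r)\,\tfrac{dx\,dr}{r}$, and hence for every surface ball $\Delta \subset \Delta(x_0,\tau R)$ one has $|\Delta|^{-1}\iint_{T_\Delta} \beta_{u,i}(x,r)\,\tfrac{dx\,dr}{r} \le |\Delta|^{-1}\iint_{T_\Delta} \beta_u(x,r)\,\tfrac{dx\,dr}{r}$. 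Taking the supremum over all such $\Delta$ gives, by Definition \ref{def:Carl},
\[
\left\|\beta_{u,i}(x,r)\,\frac{dx\,dr}{r}\right\|_{\mathcal{C}(\Delta(x_0,\tau R))} \le \left\|\beta_u(x,r)\,\frac{dx\,dr}{r}\right\|_{\mathcal{C}(\Delta(x_0,\tau R))}.
\]

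The second step is simply to invoke Theorem \ref{DLMeng.thrm}, whose hypotheses ($A$ is $\Lambda$-elliptic and satisfies the weak DKP condition on $T(x_0,R)$, $u > 0$ solves $Lu = 0$ in $T(x_0,R)$ and vanishes on $\Delta(x_0,R)$, and $\tau \in (0,1/20]$) are exactly those assumed here, to bound the right-hand side by $C\big(\tau^\eta + \|\alpha_2(x,r)^2\,\tfrac{dx\,dr}{r}\|_{\mathcal{C}(\Delta(x_0,R))}\big)$ with $C$ and $\eta$ depending only on $\Lambda$ and $n$. Chaining the two inequalities yields the claimed estimate.

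There is no real obstacle here: the only content beyond citing \cite{DLM} is the trivial monotonicity of the localized Carleson norm under pointwise domination of the underlying measures, which in turn rests on the elementary inequality \eqref{betais.eq} whose one-line verification (expanding $|(\nabla_{x,t}u) - \lambda e_{n+1}|^2 = \sum_{i=1}^n |\partial_{x_i}u|^2 + |\partial_t u - \lambda|^2 \ge |\partial_{x_i}u|^2$ and averaging) is already indicated in Definition \ref{localenergies.def}. Thus the corollary is immediate.
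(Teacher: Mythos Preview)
Your proposal is correct and follows exactly the paper's approach: the paper deduces the corollary in one line from Theorem \ref{DLMeng.thrm} via the pointwise inequality \eqref{betais.eq}, and you have simply written out the monotonicity of the localized Carleson norm that this entails.
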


\section{A local quantitative estimate on the FKP Carleson measure for $\hm_L$}\label{quantest.sect}
In this section we prove Theorem \ref{mainquantest.thrm}, from which we will derive all of our other results. Below $\varphi \in C_c(B(0,1))$ is a fixed radial function with $\varphi(x) = 1$ for $|x| \le 1/2$, $0 \le \varphi \le 1$. For a function $f$  and $r > 0$ we use the notation $f_r(y) := r^{-n}f(y/r)$.

\begin{theorem}\label{mainquantest.thrm}
Suppose $x_0 \in \mathbb{R}^n$ and $R > 0$. Let $A$ be a $\Lambda$-elliptic matrix satisfying the weak DKP condition on $T(x_0, 100 R)$. Let $Z_0 = (z_0,t_0)\in  \ree_+ $ be any point with $t_0 > 100R$ and  $\omega = \omega^{X_0}_L$ be the elliptic measure associated to $L = -\div A \nabla$ on $\mathbb{R}^{n+1}_+$ with pole at $X_0$. 
Then the measure
\[d\nu(x,r) = \frac{|\omega \ast (\nabla \varphi)_r(x)|^2}{|\omega \ast \varphi_r(x)|^2} \frac{dx \, dr}{r}\]
is a Carleson measure in $\Delta(x_0,R)$. Moreover, there exists $\tau_0  \in (0,1/20]$ depending on dimension and $\eta = \eta(n, \Lambda)$ such that for all $\tau \in (0,\tau_0)$
\begin{equation}\label{mainquantest.eq}
\|\nu\|_{\mathcal{C}(\Delta(x_0,\tau R))} \le C\left(\tau^\eta +  \left\|\alpha_2(x,r)^2 \, \frac{dx\, dr}{r}\right\|_{\mathcal{C}(\Delta(x_0, 100 R))}\right),
\end{equation}
where $C = C(n, \Lambda)$.
If $A$ satisfies the (global) weak DKP condition, then the estimate \eqref{mainquantest.eq} as well as 
\begin{equation}\label{mainquantest2.eq}
\|\nu\|_{\mathcal{C}} \le C\left\|\alpha_2(x,r)^2 \, \frac{dx\, dr}{r}\right\|_{\mathcal{C}},
\end{equation}
holds for
\[d\nu(x,r) = \frac{|\omega_L^\infty \ast (\nabla \varphi)_r(x)|^2}{|\omega_L^\infty \ast \varphi_r(x)|^2} \frac{dx \, dr}{r},\]
where $\omega_L^\infty$ is the elliptic measure with pole at infinity 
\end{theorem}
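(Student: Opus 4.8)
The plan is to use the Riesz formula to transfer the analysis of $\nu$ from the boundary measure $\omega$ to the Green function, and then to invoke the energy estimates of \cite{DLM} (Theorem~\ref{DLMeng.thrm}). Set $u:=G_L(X_0,\cdot)$, a positive weak solution of $L^T u=0$ in $\ree_+\setminus\{X_0\}$ vanishing continuously on $\rn$. Since $t_0>100R$ we have $X_0\notin T(x_0,100R)$, so $u$ is a positive solution in $T(x_0,100R)$ vanishing on $\Delta(x_0,100R)$, and by Remark~\ref{AvsAt.rmk} the matrix $A^T$ satisfies the weak DKP condition on $T(x_0,100R)$ with the same oscillation coefficients $\alpha_2$ and $\gamma$; thus Theorem~\ref{DLMeng.thrm} and Lemma~\ref{gammabdalpha.lem} apply to $u$. (In the pole-at-infinity case one replaces $u$ by the Green function $U$ of Lemma~\ref{def:Greeninf}, a global positive solution of $L^T U=0$ on $\ree_+$ vanishing on $\rn$, and uses \eqref{Rieszformulainfty.eq} in place of \eqref{Rieszform.eq}.) First I would control the denominator: since $\varphi\equiv1$ on $\Delta(0,1/2)$, $0\le\varphi\le1$ and $\supp\varphi\subset\Delta(0,1)$, one has $r^{-n}\omega(\Delta(x,r/2))\le\omega\ast\varphi_r(x)\le r^{-n}\omega(\Delta(x,r))$, and the local doubling of $\omega$ (Lemma~\ref{hmdoub.lem}, applicable since $X_0\notin T(x,4r)$), Lemma~\ref{CFMSeng.lem}, Lemma~\ref{engcs.lem} and Harnack's inequality give
\[
\omega\ast\varphi_r(x)\ \approx\ \frac{\omega(\Delta(x,r))}{|\Delta(x,r)|}\ \approx\ E_u(x,2r)^{1/2}\qquad\text{for all }x\in\Delta(x_0,R),\ r\in(0,R);
\]
for $U$ one uses Lemma~\ref{gfinftyfacts.lemma} instead, and the estimate holds for all $x$ and $r$.

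The heart of the matter, which I expect to be the main obstacle, is the numerator. Fix $j\in\{1,\dots,n\}$ and apply \eqref{Rieszform.eq} with $f(y)=(\partial_j\varphi)_r(x-y)$ and the extension $F_j(y,s)=(\partial_j\varphi)_r(x-y)\,\eta(s/r)$, where $\eta\in C_c^\infty(\RR)$ with $\eta\equiv1$ near $0$ and $\supp\eta\subset(-1,1)$; note $F_j(X_0)=0$ because $t_0>100R$. The key point is that $\int_{\rn}(\partial_j\varphi)_r=0$ forces $\iint_{\ree_+}\nabla_{y,s}F_j\,dy\,ds=0$, so in the resulting identity $A^T\nabla u$ may be replaced by $A^T\nabla u-\vec{c}$ for any constant vector $\vec{c}$. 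Choosing $\vec{c}=A_0^T\,\lambda(x,2r)\,e_{n+1}$ with $\lambda(x,2r):=\fiint_{T(x,2r)}\partial_s u$ and $A_0\in\mathfrak{A}(\Lambda)$ a near-minimizer of $\fiint_{T(x,2r)}|A-A_0|^2$, writing $A^T\nabla u-\vec{c}=A^T(\nabla u-\lambda(x,2r)e_{n+1})+(A-A_0)^T\lambda(x,2r)e_{n+1}$, and applying Cauchy--Schwarz over $\supp\nabla F_j\subset\{|x-y|<r\}\times(0,r)\subset T(x,2r)$ (where $|\nabla F_j|\lesssim r^{-n-1}$ and $|\supp\nabla F_j|\approx|T(x,2r)|$), one obtains
\[
\big|\omega\ast(\partial_j\varphi)_r(x)\big|\ \lesssim\ J_u(x,2r)^{1/2}+|\lambda(x,2r)|\,\gamma(x,2r)\ \lesssim\ J_u(x,2r)^{1/2}+E_u(x,2r)^{1/2}\,\gamma(x,2r).
\]
The reason the cancellation is essential: a naive Cauchy--Schwarz without subtracting $\vec{c}$ only yields $|\omega\ast(\nabla\varphi)_r(x)|\lesssim E_u(x,2r)^{1/2}$, which merely matches the denominator and gives nothing; the cancellation is exactly what exhibits $\omega\ast(\nabla\varphi)_r$ as seeing only $J_u$, the \cite{DLM} quantity measuring how far $\nabla u$ is from being vertical, modulo the coefficient oscillation $\gamma$.

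Combining the two bounds with $J_u=\beta_u E_u$ and $|\lambda(x,2r)|\le E_u(x,2r)^{1/2}$ yields the pointwise density estimate
\[
d\nu(x,r)\ \lesssim\ \big(\beta_u(x,2r)+\gamma(x,2r)^2\big)\,\frac{dx\,dr}{r},\qquad x\in\Delta(x_0,R),\ r\in(0,R).
\]
To finish, I would integrate this over $T_\Delta$ for $\Delta\subset\Delta(x_0,\tau R)$; after absorbing the dilation $2r\mapsto r$ at the cost of enlarging $\Delta$ by a bounded factor, Theorem~\ref{DLMeng.thrm} applied to $u$ (with $100R$ in the role of $R$ and dilation parameter a fixed multiple of $\tau$) controls the $\beta_u$-term, while Lemma~\ref{gammabdalpha.lem} controls the $\gamma$-term, giving \eqref{mainquantest.eq} for $\tau$ below a dimensional threshold $\tau_0\le 1/20$; taking instead $\Delta\subset\Delta(x_0,R)$ with a fixed small dilation parameter shows that $\nu$ is a Carleson measure on $\Delta(x_0,R)$. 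For the operator with pole at infinity and a \emph{global} weak DKP matrix, the same pointwise bound holds on all of $\ree_+$ and \eqref{mainquantest.eq} follows verbatim (now for every $x_0,R$); moreover the global hypothesis allows the scale $R$ in Theorem~\ref{DLMeng.thrm} to be taken arbitrarily large without the Carleson norm of $\alpha_2(x,r)^2\frac{dx\,dr}{r}$ growing, so the $\tau^\eta$ term can be sent to $0$, which yields $\big\|\beta_U(x,r)\frac{dx\,dr}{r}\big\|_{\mathcal{C}}\lesssim\big\|\alpha_2(x,r)^2\frac{dx\,dr}{r}\big\|_{\mathcal{C}}$; together with Lemma~\ref{gammabdalpha.lem} this gives \eqref{mainquantest2.eq}.
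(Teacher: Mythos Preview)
Your proof is correct and follows the same overall architecture as the paper: replace the denominator by $E_u(x,2r)^{1/2}$ via doubling and Lemma~\ref{CFMSeng.lem}, pass the numerator to the Green function by the Riesz formula, obtain the pointwise density bound $d\nu\lesssim(\beta_u(x,2r)+\gamma(x,2r)^2)\tfrac{dx\,dr}{r}$, and finish with Theorem~\ref{DLMeng.thrm} and Lemma~\ref{gammabdalpha.lem}. The one substantive difference is how you extract the numerator bound. The paper first splits $A^T=(A^T-A_0^T)+A_0^T$; the oscillation piece gives $\gamma\,E_u^{1/2}$ by Cauchy--Schwarz, while for the constant-coefficient piece the paper decomposes $\nabla G=(\nabla_y G,0)+(0,\partial_s G)$ and disposes of the vertical part by integrating by parts twice (in $s$, using $G|_{s=0}=0$, then back in $y_i$) to convert it into a horizontal energy $E_{G,i}^{1/2}$. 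This lands on $\sum_i\beta_{u,i}+\gamma^2$, after which the paper uses $\beta_{u,i}\le\beta_u$. Your route---exploiting $\iint_{\ree_+}\nabla F_j=0$ (which holds because $\partial_j\varphi$ has mean zero) to subtract the constant vector $A_0^T\lambda(x,2r)e_{n+1}$---reaches $J_u$ directly without the integration-by-parts detour, and makes explicit that the gain in $\omega\ast(\nabla\varphi)_r$ comes precisely from the cancellation built into $\psi=\nabla\varphi$. Both arguments yield the same pointwise estimate and rely on the same \cite{DLM} input; yours is a modestly more streamlined variant.
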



\begin{proof}
We prove \eqref{mainquantest.eq} in the finite pole case. To show \eqref{mainquantest.eq} in the infinite pole case one can replace the use of both Lemma \ref{CFMSeng.lem} and Lemma \ref{hmdoub.lem} with Lemma \ref{gfinftyfacts.lemma}.
Let $x \in \rn$ and $r >0$ be such that $\Delta(x,r) \subset \Delta(x_0,R)$. Set $G(Y) = G(Z_0, Y)$ the Green function for operator $L$ with pole at $X_0$ and $\hm:= \hm_L^{Z_0}$. We note that $G(Y)$ solves $-\div A^T \nabla G = \delta_{Z_0}$, so that $G$ is a solution to $L^Tu = 0$ away from $Z_0$ vanishing on the boundary. In particular, since the matrix $A^T$ has the same $\alpha_2$ and $\gamma$ numbers as $A$ (see Remark \ref{AvsAt.rmk}), we may apply Corollary \ref{xengy.cor} and Lemma \ref{gammabdalpha.lem} to $u(Y) = G(Y)$ and we will do this later.

We will estimate the density
\[\frac{|\omega \ast (\nabla \varphi)_r(x)|^2}{|\omega \ast \varphi_r(x)|^2}.\]
We start with replacing the denominator by the energy $E_G(x,r)$. By the local doubling property of $\omega$ (Lemma \ref{hmdoub.lem}) and the properties of $\varphi$ we have
\[\omega \ast \varphi_r(x) \approx \frac{\omega(\Delta(x,r))}{|\Delta(x,r)|}.\]
Then using Lemma \ref{CFMSeng.lem} (with $L^T$ in place of $L$) we have
\[\omega \ast \varphi_r(x) \approx \left(\fiint_{T(x,2r)} |\nabla_Y G(Y)|^2 \ dY \right)^{1/2} = E_G(x,2r)^{1/2}.\]
Thus,
\begin{equation}\label{engreplaced.eq}
\begin{split}
\frac{|\omega \ast (\nabla \varphi)_r(x)|^2}{|\omega \ast \varphi_r(x)|^2} &\approx |\omega \ast (\nabla \varphi)_r(x)|^2 E_G(x,2r)^{-1} 
\\& \approx \sum_{i =1}^n |\omega \ast (\partial_{x_i} \varphi)_r(x)|^2 E_G(x,2r)^{-1}.
\end{split}
\end{equation}
The following claim will essentially prove the theorem.
\begin{claim}\label{mtmc.cl}
\[|\omega \ast (\partial_{x_i} \varphi)_r(x)|^2 \lesssim  \gamma(x, 2 r)^2 E_G(x,2 r) + \sum_{\ell = 1}^n E_{G,\ell}(x,2 r).\]
\end{claim}
\begin{proof}[Proof of Claim \ref{mtmc.cl}]
We need to make a few simple observations. We have $(\partial_{x_i} \varphi)_r(x) = r \partial_{x_i} \varphi_r(x)$. Let $g \in C_0^\infty([-1,1])$, $0 \le g \le 1$, $g = 1$ on $[-1/2,1/2]$ then define
\[\Phi_{x,r}(y,s) := \varphi_r (x -y) g(s/r).\]
Then $\partial_{y_i} \Phi_{x,r}(y,s)$ is a smooth extension of $\partial_{y_i} \varphi_r(x-y)$, that is, $\partial_{y_i} \Phi_{x,r}  \in C^\infty_0(B(x,2r))$ and $\partial_{y_i} \Phi_{x,r}(\cdot,0) = \partial_{y_i} \varphi_r(x-y)$. Moreover, for $j = 1,\dots n$, we have
\begin{equation}\label{Phibds.eq}
|\partial_t \partial_{y_i} \Phi_{x,r}(y,s)|,  |\partial_{y_j} \partial_{y_i} \Phi_{x,r}(y,s)| \lesssim \frac{1}{r^{n+2}}\chi_{B(x,2r)}(y,s),
\end{equation}
where the implicit constants depend on dimension alone.
The Riesz formula \eqref{Rieszform.eq} gives
\begin{align*}
\omega \ast (\partial_{x_i} \varphi)_r(x) &= r \int_{\mathbb{R}^n} \partial_{x_i} \varphi_r(x - y) \, d\omega(y) 
\\& =  r\iint_{\mathbb{R}^{n+1}_+} A^{T}(Y) \nabla_{y,s} G(y,s) \cdot \nabla_{y,s} \partial_{y_i} \Phi_{x,r}(y,s) \, dy \, ds,
\end{align*}
where we used that $\partial_{y_i} \Phi_{x,r}(Z_0) = 0$ and we are using the convention that $G = 0$ in the lower half space $\ree_-$.
Now we let $A_0 = A_{x,r}$ be a constant matrix attaining the infimum in the definition of $\gamma(x,r)$. We write 
\begin{align*}
& \iint_{\mathbb{R}^{n+1}_+} A^{T}(Y) \nabla_{y,s} G(y,s) \cdot \nabla_{y,s} \partial_{y_i} \Phi_{x,r}(y,s) \, dy \, ds 
\\& = \iint_{\mathbb{R}^{n+1}_+} (A^{T}(Y) - A^T_0)\nabla_{y,s} G(y,s) \cdot \nabla_{y,s} \partial_{y_i} \Phi_{x,r}(y,s) \, dy \, ds  
\\ & \quad + \iint_{\mathbb{R}^{n+1}_+} A_0^{T} \nabla_{y,s} G(y,s) \cdot \nabla_{y,s} \partial_{y_i} \Phi_{x,r}(y,s) \, dy \, ds.
\end{align*}
In summary, we have shown
\begin{equation}\label{numeratorbd.eq}
|\omega \ast (\partial_{x_i} \varphi)_r(x)| \le I + II,
\end{equation}
where
\[I := r\left| \iint_{\mathbb{R}^{n+1}_+} (A^{T}(Y) - A^T_0)\nabla_{y,s} G(y,s) \cdot \nabla_{y,s} \partial_{y_i} \Phi_{x,r}(y,s) \, dy \, ds\right|\]
and 
\[II: = r\left| \iint_{\mathbb{R}^{n+1}_+} A_0^{T} \nabla_{y,s} G(y,s) \cdot \nabla_{y,s} \partial_{y_i} \Phi_{x,r}(y,s) \, dy \, ds \right|.\]
First we handle $I$. By the Cauchy-Schwarz inequality and \eqref{Phibds.eq}
\begin{align}\label{1bound.eq}
I &\lesssim  \left(\fiint_{T(x,2r)} |A - A_0|^2 \, dY\right)^{1/2} \left(\fiint_{T(x,2r)} |\nabla_YG(Y)|^2 \, dY\right)^{1/2} 
\\ \nonumber & \le \gamma(x, 2 r) E_G(x,2 r)^{1/2}.
\end{align}
To handle term $II$ we write $\nabla_{y,s} G(y,s) = (\nabla_{y} G(y,s),0)^T+ (0,\partial_s G(y,s))^T$ to see
\begin{align*}
II & \le r\left| \iint_{\mathbb{R}^{n+1}_+} A_0^{T} (\nabla_y G(y,s),0)^T \cdot \nabla_{y,s} \partial_{y_i} \Phi_{x,r}(y,s) \, dy \, ds \right|
\\ & \quad + r\left| \iint_{\mathbb{R}^{n+1}_+} A_0^{T} (0, \partial_s G(y,s))^T \cdot \nabla_{y,s} \partial_{y_i} \Phi_{x,r}(y,s) \, dy \, ds \right|
\\ & = II_1 + II_2.
\end{align*}
To handle $II_1$ we use the boundedness of $A$ and \eqref{Phibds.eq} to see
\[II_1 \lesssim \fiint_{T(x,2r)} |\nabla_{y} G(y,s) | \, dy \, ds \lesssim \sum_{\ell = 1}^n E_{G,\ell}(x,2r)^{1/2}.\]
To handle $II_2$ we integrate by parts in $s$ noting that $G = 0$ on $\mathbb{R}^{n} \times \{0\}$ and $\Phi \in C_0^\infty(B(x,2r))$, but first we write out the matrix multiplication using the notation $(A_0)_{i,j} =: a^0_{i,j}$. We obtain 
\begin{align*}
II_2 &=r \left| \sum_{j = 1}^{n+1} \iint a^0_{ n+1,j} \partial_s G(y,s) (\partial_{y_j}\partial_{y_i} \Phi_{x,r}(y,s)) \, dy \, ds\right| 
\\& = r \left| \sum_{j = 1}^{n+1} \iint a^0_{ n+1,j}  G(y,s)  (\partial_s \partial_{y_j}\partial_{y_i} \Phi_{x,r}(y,s)) \, dy \, ds\right|,
\end{align*}
where we used the notation $\partial_{y_{n+1}} = \partial_s$ and integrated by parts in $s$. Now we integrate by parts in the $y_i$ variable using that $\Phi \in C_0^\infty(B(x,2r))$ and that $i \in \{1,\dots, n\}$ to obtain
\begin{equation}\label{22almfin.eq}
II_2 = r \left| \sum_{j = 1}^{n+1} \iint a_{j, n+1}^0  \partial_{y_i} G(y,s) (\partial_s \partial_{y_j} \Phi_{x,r}(y,s)) \, dy \, ds\right|.
\end{equation}
Now using the boundedness of $A$, \eqref{22almfin.eq} and \eqref{Phibds.eq} we obtain
\[II_2 \lesssim E_{G,i}(x,2r)^{1/2}.\]
Putting together our estimates for $II_1$ and $II_2$ we have
\[II \lesssim  \sum_{\ell = 1}^n E_{G,\ell}(x,2r)^{1/2}.\]
Combining this bound with \eqref{1bound.eq} and plugging into  \eqref{numeratorbd.eq} gives
\[ |\omega \ast (\partial_{x_i} \varphi)_r(x)| \lesssim \gamma(x, 2 r) E_G(x,2 r)^{1/2} + \sum_{i = 1}^n E_{G,i}(x,2r)^{1/2},\]
which proves the claim.
\end{proof}
Now using Claim \ref{mtmc.cl} and \eqref{engreplaced.eq}, we have
\begin{equation}\label{alfinqtest.eq}
\begin{split}
\frac{|\omega \ast (\nabla \varphi)_r(x)|^2}{|\omega \ast \varphi_r(x)|^2} &\lesssim \gamma(x, 2 r)^2 + \sum_{i = 1}^n \frac{E_{G,i}(x,2r)}{E_G(x,2r)}
\\ & \lesssim \gamma(x, 2 r)^2 + \sum_{i = 1}^n \beta_i(x,2r).
\end{split}
\end{equation}
Returning the the measure $\nu$ we have
\begin{equation}\label{ptwisedensitybd1.eq}
d\nu(x,r) \lesssim \left( \gamma(x, 2 r)^2 + \sum_{i = 1}^n \beta_i(x,2r) \right) \frac{dx \, dr}{r}.
\end{equation}
The estimate \eqref{mainquantest.eq} then follows by Corollary \ref{xengy.cor} and Lemma \ref{gammabdalpha.lem} (applied with $u = G$ and the operator $L^T = -\div A^T \nabla$, see Remark \ref{AvsAt.rmk}). To obtain \eqref{mainquantest2.eq}, we note that \eqref{mainquantest.eq} holds for $x_0 = 0$ and any $R > 0$ since the pole is infinite. Then for any fixed $\tau \in (0,\tau_n]$ it holds
\[\|\nu\|_{\mathcal{C}} = \sup_{R > 0} \|\nu\|_{\mathcal{C}(\Delta(0,\tau R))} \le C \left(\tau^\eta + \left\|\alpha_2(x,r)^2 \, \frac{dx\, dr}{r}\right\|_{\mathcal{C}}\right).\]
As $\tau$ can be taken arbitrarily small, this shows \eqref{mainquantest2.eq}.

\end{proof}

\section{Proof of Theorem \ref{mainvmo.thrm}}\label{mtproof.sect}
In this section, we prove Theorem \ref{mainvmo.thrm}. We will first `prove' the infinite pole case, which is immediate from Theorem \ref{mainquantest.thrm}, Lemma \ref{FKPabscty.thrm} and the work of Korey \cite{Kor}. To prove the finite pole case, we will need to introduce a change of pole argument and the `kernel function'. 


\begin{theorem}\label{mainvmopt1.thrm}
Let $L = -\div A\nabla$ be a divergence form elliptic operator on $\ree_+$, whose coefficient matrix $A$ satisfies the vanishing weak DKP condition. Let $\hm^\infty_L$ be the elliptic measure with pole at infinity. Then $\hm^\infty_L \ll \mathcal{L}^n$, $\hm^\infty_L  \in A_\infty$ and $k^\infty_L(y) := \tfrac{d\hm^\infty_L}{dx}(y)$ has the property that $\log k^\infty_L \in VMO(\rn)$. 
\end{theorem}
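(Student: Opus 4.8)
The plan is to combine the three ingredients already assembled in the excerpt: the pointwise density estimate for the Fefferman–Kenig–Pipher Carleson measure $\nu$ built from $\hm^\infty_L$ (Theorem \ref{mainquantest.thrm}), the absolute continuity criterion (Lemma \ref{FKPabscty.thrm}), and Korey's equivalence theorem (Theorem \ref{Koreyequiv.thrm}). First I would invoke the infinite-pole part of Theorem \ref{mainquantest.thrm}: since $A$ satisfies the global weak DKP condition, \eqref{mainquantest2.eq} gives that
\[
d\nu(x,r) = \frac{|\hm^\infty_L \ast (\nabla \varphi)_r(x)|^2}{|\hm^\infty_L \ast \varphi_r(x)|^2} \, \frac{dx\,dr}{r}
\]
is a Carleson measure on $\ree_+$. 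Combined with the global doubling of $\hm^\infty_L$ (Lemma \ref{gfinftyfacts.lemma}), Lemma \ref{FKPabscty.thrm} applies and yields $\hm^\infty_L \ll \mathcal{L}^n$; in fact, as remarked in that lemma's proof, one gets directly that $\hm^\infty_L \in A_\infty(dx)$, so the density $k^\infty_L = d\hm^\infty_L/dx$ is an $A_\infty$ weight. (Alternatively one could cite the doubling + Carleson criterion Theorem \ref{FKPthrm.thrm} for the $A_\infty$ conclusion.)

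Next I would upgrade the Carleson condition on $\nu$ to a \emph{vanishing} Carleson condition, which is exactly what the \emph{vanishing} weak DKP hypothesis buys. Using the local estimate \eqref{mainquantest.eq}: fix a small scale $r_0$ and a surface ball $\Delta(x,r) \subset \Delta(x_0, r_0)$ with $x_0 \in \rn$ arbitrary; apply \eqref{mainquantest.eq} with $R = r_0/(100\tau)$ (for $\tau$ small, to be chosen) so that $\Delta(x_0, r_0) \subset \Delta(x_0, \tau R)$ and $T(x_0, 100R)$ has radius comparable to $r_0/\tau$. Then
\[
\|\nu\|_{\mathcal{C}(\Delta(x_0, r_0))} \le C\Big(\tau^\eta + \big\|\alpha_2(x,r)^2 \tfrac{dx\,dr}{r}\big\|_{\mathcal{C}(\Delta(x_0,\, 100R))}\Big).
\]
Given $\epsilon > 0$, first choose $\tau$ so that $C\tau^\eta < \epsilon/2$; this fixes the relation $R \approx r_0/\tau$. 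Since the weak DKP Carleson measure has vanishing trace \eqref{def:vCarl}, once $r_0$ (hence $100R \approx r_0/\tau$) is small enough — uniformly in $x_0$ — the second term is $< \epsilon/2$. Hence $\lim_{r_0 \to 0^+} \sup_{x_0} \|\nu\|_{\mathcal{C}(\Delta(x_0,r_0))} = 0$, i.e.\ $\nu$ is a vanishing Carleson measure. (Here I should double-check that $\varphi$ in Theorem \ref{mainquantest.thrm}, which is merely $C_c$, can be replaced by the smooth radial bump $\varphi$ of Theorem \ref{FKPthrm.thrm}/\ref{Koreyequiv.thrm}; this is the standard kernel-replacement argument, e.g.\ via \cite[Lemma 3.12]{FKP}, exactly as used in the proof of Lemma \ref{FKPabscty.thrm}.)

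Finally, with $w = k^\infty_L \in A_\infty$ and $w\,dx = \hm^\infty_L$ doubling and $\nu = \nu_w$ a vanishing Carleson measure, condition (4) of Theorem \ref{Koreyequiv.thrm} is satisfied, so the equivalent condition (5) gives $\log k^\infty_L \in VMO(\rn)$, completing the proof. The main obstacle, and the only place real work is needed beyond bookkeeping, is the uniform-in-$x_0$ passage from \eqref{mainquantest.eq} to the vanishing trace of $\nu$: one must track that the scale $100R$ entering the weak DKP norm on the right is controlled by $r_0$ (up to the fixed factor $1/\tau$) independently of the center $x_0$, and confirm the kernel-replacement step does not interfere with the vanishing property — both are routine but must be stated carefully. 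Everything else is a direct citation of results already in place.
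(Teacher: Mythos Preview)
Your proposal is correct and follows essentially the same approach as the paper's proof: invoke Theorem \ref{mainquantest.thrm} to get that $\nu$ is Carleson, use doubling plus Theorem \ref{FKPthrm.thrm}/Lemma \ref{FKPabscty.thrm} to obtain $\hm^\infty_L \in A_\infty$, then use the vanishing weak DKP hypothesis with \eqref{mainquantest.eq} to upgrade $\nu$ to a vanishing Carleson measure and conclude via Korey's equivalence (4)$\Rightarrow$(5). Your $\epsilon$--$\tau$ argument spelling out the vanishing trace of $\nu$ is exactly the detail the paper suppresses when it simply asserts ``Theorem \ref{mainquantest.thrm} implies that $\nu$ is a Carleson measure with vanishing trace''; your concern about the kernel $\varphi$ is unnecessary, since the same smooth radial bump is used throughout (the $C_c$ in Section \ref{quantest.sect} should be read as $C_c^\infty$, as the proof of Theorem \ref{mainquantest.thrm} differentiates $\varphi$ several times).
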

The function $k^\infty_L(y)$ is often referred to as the elliptic kernel with pole at infinity. 
\begin{proof}[Proof of Theorem \ref{mainvmopt1.thrm}]

By Theorem \ref{mainquantest.thrm} and the fact that $A$ satisfies the weak DKP condition it holds that the measure $\nu$ defined by
\[d\nu(x,r) = \frac{|\omega_L^\infty \ast (\nabla \varphi)_r(x)|^2}{|\omega_L^\infty \ast \varphi_r(x)|^2} \frac{dx \, dr}{r}\]
is a Carleson measure. Since $\hm_L^\infty$ is a doubling measure (see Lemma \ref{gfinftyfacts.lemma}), Theorem \ref{FKPthrm.thrm} implies that $\hm_L^\infty$ is an $A_\infty(dx)$ measure, that is, $k^\infty_L = \tfrac{d\hm_L^\infty}{dx}$ exists and is an $A_\infty$ weight. It follows that $\log k^\infty_L \in BMO(\rn)$ \cite{GCRDF}. Moreover, since $A$ satisfies the vanishing weak DKP condition Theorem \ref{mainquantest.thrm} implies that $\nu$ is a Carleson measure with vanishing trace. Therefore, by \cite[Theorem 1]{Kor} $\log k^\infty_L \in VMO(\rn)$ (see Theorem \ref{Koreyequiv.thrm} (4) implies (5)).
\end{proof}

We observe that in the proof above, we did not use the {\it vanishing} assumption in the weak DKP condition to obtain that $\hm_L^\infty$ is an $A_\infty$ weight and that 
$\log k^\infty_L \in BMO(\rn)$. In particular, we have also proven the following theorem.
\begin{theorem}\label{KPalt.thrm}
Let $L = -\div A\nabla$ be a divergence form elliptic operator on $\ree_+$, whose coefficient matrix $A$ satisfies the weak DKP condition. Let $\hm^\infty_L$ be the elliptic measure with pole at infinity. Then $\hm^\infty_L \ll \mathcal{L}^n$, 
$\hm_L^\infty \in A_\infty(dx)$, and $k^\infty_L(y) := \tfrac{d\hm^\infty_L}{dx}(y)$ has the property that $\log k \in BMO(\rn)$. The implicit constants in the statements $\hm_L^\infty \in  A_\infty$ and $\log k \in BMO(\rn)$ are each bounded by a constant depending on $n$, ellipticity and 
$\left\|\alpha_2(x,r)^2 \, \frac{dx\, dr}{r}\right\|_{\mathcal{C}}$.
\end{theorem}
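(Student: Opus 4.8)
The key observation is that the proof of Theorem~\ref{mainvmopt1.thrm} only uses the vanishing hypothesis at the very last step, when invoking Korey's characterization to upgrade $BMO$ to $VMO$. Everything before that point goes through under the weaker (non-vanishing) assumption that $\alpha_2(x,r)^2\,\tfrac{dx\,dr}{r}$ is merely a Carleson measure, with all constants controlled by $n$, $\Lambda$, and $\left\|\alpha_2(x,r)^2\,\tfrac{dx\,dr}{r}\right\|_{\mathcal{C}}$. So the plan is to repeat that argument, tracking constants and stopping one step earlier.

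\textbf{Step 1 (the Carleson measure $\nu$).} Apply the second half of Theorem~\ref{mainquantest.thrm}: since $A$ satisfies the global weak DKP condition, the measure $d\nu(x,r) = \tfrac{|\hm_L^\infty \ast (\nabla\varphi)_r(x)|^2}{|\hm_L^\infty \ast \varphi_r(x)|^2}\,\tfrac{dx\,dr}{r}$ is a Carleson measure, and by \eqref{mainquantest2.eq} its Carleson norm is bounded by $C(n,\Lambda)\left\|\alpha_2(x,r)^2\,\tfrac{dx\,dr}{r}\right\|_{\mathcal{C}}$.

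\textbf{Step 2 (from the Carleson measure to $A_\infty$).} By Lemma~\ref{gfinftyfacts.lemma}, $\hm_L^\infty$ is a globally doubling measure with doubling constant depending only on $n$ and $\Lambda$. Thus Theorem~\ref{FKPthrm.thrm} (which, by our Lemma~\ref{FKPabscty.thrm}, also yields absolute continuity) applies: $\hm_L^\infty \ll \mathcal{L}^n$, and its density $k_L^\infty = \tfrac{d\hm_L^\infty}{dx}$ is an $A_\infty(dx)$ weight with $[k_L^\infty]_{A_\infty} \le F_1\big(n, C_{\mathrm{doub}}, \varphi, \|\nu\|_{\mathcal{C}}\big)$; combining with Steps~1 and the doubling estimate, this bound depends only on $n$, $\Lambda$, and $\left\|\alpha_2(x,r)^2\,\tfrac{dx\,dr}{r}\right\|_{\mathcal{C}}$.

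\textbf{Step 3 (from $A_\infty$ to $\log k \in BMO$).} It is a standard fact in the theory of Muckenhoupt weights (see \cite{GCRDF}) that if $w \in A_\infty$ then $\log w \in BMO(\rn)$ with $\|\log w\|_{BMO}$ controlled by $[w]_{A_\infty}$ (and dimension). Apply this with $w = k_L^\infty$. Chaining the three steps gives the stated dependence of the implicit constants on $n$, $\Lambda$, and $\left\|\alpha_2(x,r)^2\,\tfrac{dx\,dr}{r}\right\|_{\mathcal{C}}$.

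\textbf{Main obstacle.} There is essentially no new obstacle here — the theorem is a ``large constant'' corollary of machinery already built. The only point requiring care is the bookkeeping: one must make sure that at no stage of Steps~1--3 does a constant implicitly absorb the \emph{vanishing} of the Carleson norm (it does not, since the estimates \eqref{mainquantest2.eq}, Theorem~\ref{FKPthrm.thrm}, and the $A_\infty \Rightarrow \log w \in BMO$ implication are all purely quantitative with the indicated dependencies), and that the doubling constant of $\hm_L^\infty$ really is structural — which is exactly the content of Lemma~\ref{gfinftyfacts.lemma}. Hence the proof is just the proof of Theorem~\ref{mainvmopt1.thrm} with its final sentence deleted and the constant dependencies made explicit.
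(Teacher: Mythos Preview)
Your proposal is correct and matches the paper's own argument essentially verbatim: the paper explicitly notes that the proof of Theorem~\ref{mainvmopt1.thrm} did not use the vanishing assumption to obtain $\hm_L^\infty \in A_\infty$ and $\log k_L^\infty \in BMO$, so Theorem~\ref{KPalt.thrm} follows immediately. Your Steps~1--3 are precisely the first three sentences of that proof, with the final (Korey/VMO) step omitted and the constant dependencies tracked.
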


In order to prove the second half of Theorem \ref{mainvmo.thrm} (i.e. for finite poles), we need to move the pole in Theorem \ref{mainvmopt1.thrm} from infinity to a point $X \in \ree_+$. To do so we need some standard estimates for the quotient of solutions to divergence form elliptic equations. Most of these estimates can be found in \cite{JKNTA, KenigCBMS}, where they are stated for harmonic functions or operators with symmetric coefficients. But upon inspection the proofs do not rely on the symmetry when we use the appropriate notion of Green's functions for $L $ (see Definition \ref{def:Green} and also \cite{HMTbook}). 

\begin{lemma}[Comparison principle]\label{comparisonprinc.lem}
Let $L$ be a divergence form elliptic operator with ellipticity $\Lambda$ and $x \in \rn$ and $r > 0$.. If $Lu =Lv = 0$, $u, v \ge 0$ in $T(x,2r)$, $u$ and $v$ are non-trivial functions which vanish continuously on $\Delta(x, 2r)$ then
\[ 
\frac{u(X)}{v(X)} \approx \frac{u((x,r))}{v((x,r))}, \quad \forall X \in T(x,r),
\]
where the implicit constants depend on $n$, $\Lambda$.

\end{lemma}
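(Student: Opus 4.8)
The statement to prove is the comparison principle (Lemma \ref{comparisonprinc.lem}): for $Lu=Lv=0$ with $u,v\geq 0$ nontrivial in $T(x,2r)$, both vanishing continuously on $\Delta(x,2r)$, one has $u(X)/v(X)\approx u((x,r))/v((x,r))$ for all $X\in T(x,r)$. The plan is to reduce everything to the boundary Harnack principle for non-negative solutions vanishing on a portion of the (flat) boundary, together with the CFMS-type estimates already recorded in the excerpt.

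First I would recall the two ingredients that make the argument run. One is the interior Harnack inequality, which lets us compare $u$ and $v$ freely at interior points once we control them along a single ``corkscrew'' point; by a standard Harnack chain argument, for any $X\in T(x,r)$ staying a definite distance from $\Delta(x,2r)$ we already get $u(X)\approx u((x,r))$ and likewise for $v$, with constants depending only on $n,\Lambda$ and the chain length. The genuinely boundary part is the non-degeneracy estimate: for a non-negative solution $w$ in $T(x,2r)$ vanishing continuously on $\Delta(x,2r)$, one has the two-sided bound relating $w$ to its value at the corkscrew point $(x,r)$ in the form $w(X)\lesssim \big(\dist(X,\Delta(x,2r))/r\big)^{\alpha} w((x,r))$ from above (Hölder decay at the boundary, via De Giorgi--Nash--Moser and a maximum-principle comparison with a barrier) and a matching lower bound $w((x,\rho))\gtrsim (\rho/r)^{?}\,w((x,r))$ along the vertical segment via Harnack and the boundary Carleson estimate. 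These are exactly the estimates underlying Lemma \ref{CFMS.lem} and are stated in \cite{JKNTA, KenigCBMS}; as the remark before the lemma notes, their proofs do not use symmetry of the coefficients once one uses the correct Green function from Definition \ref{def:Green}.

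The main step is then the standard three-part argument. \emph{Step 1 (lower bound for $v$).} Using the boundary Carleson/maximum-principle estimate, show $v(X)\gtrsim \big(\dist(X,\Delta(x,2r))/r\big)^{\alpha} v((x,r))$ throughout $T(x,r)$; more precisely, combine the interior Harnack along a chain from $(x,r)$ with the fact that $v$ is comparable to its sup on suitable Whitney cubes. \emph{Step 2 (upper bound for $u$).} Using a barrier (a positive $L$-supersolution vanishing on $\Delta(x,2r)$, e.g. built from a power of the distance to the hyperplane after suitably large rescaling, or the Green function $G_L(X_0,\cdot)$ with a far pole, estimated by Lemma \ref{CFMSeng.lem}) and the maximum principle, show $u(X)\lesssim \big(\dist(X,\Delta(x,2r))/r\big)^{\alpha} u((x,r))$. \emph{Step 3 (combine).} Dividing the bound from Step 2 by the bound from Step 1 gives $u(X)/v(X)\lesssim u((x,r))/v((x,r))$ for all $X\in T(x,r)$; interchanging the roles of $u$ and $v$ gives the reverse inequality, and the two together yield the claimed two-sided comparison with constants depending only on $n$ and $\Lambda$.

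The hard part is purely bookkeeping rather than conceptual: one must verify that the boundary Hölder decay and the boundary Carleson (non-degeneracy) estimates genuinely hold for non-symmetric divergence-form $L$, and that the implicit constants depend only on $n$ and $\Lambda$ and not on $u,v$ or $r$ (scale invariance is immediate by the dilation $X\mapsto (X-(x,0))/r$, which preserves the class of $\Lambda$-elliptic operators). Since these are exactly the localized CFMS estimates already invoked in Lemma \ref{CFMS.lem} and Lemma \ref{CFMSeng.lem}, and the cited references \cite{JKNTA,KenigCBMS,HMTbook} provide them in the required generality, the proof amounts to assembling Steps 1--3 above; I would simply cite \cite[\S11--12]{KenigCBMS} and \cite{JKNTA} for the two boundary estimates and carry out the three-line combination.
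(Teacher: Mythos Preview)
The paper does not give a proof of this lemma; it simply records it as a standard fact and cites \cite{JKNTA,KenigCBMS} (and \cite{HMTbook} for the non-symmetric case). So there is no ``paper's own proof'' to compare against beyond that citation.

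That said, your sketch has a genuine gap. Your three-step argument hinges on having the \emph{same} exponent $\alpha$ in the upper bound of Step~2 and the lower bound of Step~1. But these two estimates come from different mechanisms and carry different exponents: the upper bound $u(X)\lesssim(\delta(X)/r)^{\alpha}\,u((x,r))$ comes from boundary H\"older decay together with the Carleson estimate, with some $\alpha=\alpha(n,\Lambda)\in(0,1)$; the only lower bound you can get for $v$ from Harnack chains is $v(X)\gtrsim(\delta(X)/r)^{\beta}\,v((x,r))$ with $\beta$ governed by the interior Harnack constant (essentially $\beta\approx\log_2 C_H$). In general $\alpha<\beta$, so dividing the two yields
\[
\frac{u(X)}{v(X)}\lesssim\Big(\frac{\delta(X)}{r}\Big)^{\alpha-\beta}\frac{u((x,r))}{v((x,r))},
\]
which blows up as $X\to\Delta(x,2r)$ rather than staying bounded. (Indeed, a two-sided bound $w(X)\approx(\delta(X)/r)^{\alpha}w((x,r))$ valid for \emph{every} nonnegative solution vanishing on $\Delta(x,2r)$ is already equivalent to the comparison principle --- compare an arbitrary $w$ to the Green function --- so asserting it in Step~1 is circular.) Your own ``$(\rho/r)^{?}$'' in the preamble is exactly this issue surfacing.

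The actual proofs in \cite{CFMS,JKNTA,KenigCBMS} do not proceed by a single division of one-sided bounds. They use an iteration: from the Carleson estimate and boundary H\"older continuity one shows that the oscillation of $u/v$ on $T(x,\rho)$ decays by a fixed factor when passing from scale $\rho$ to $\rho/2$, and then sums the resulting geometric series. If you want to write out a proof rather than cite, that iteration (or the equivalent Green-function/kernel argument in \cite{CFMS}) is what you need to supply; the ``three-line combination'' you describe does not close.
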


\begin{lemma}[Quotients of non-negative solutions]\label{QHC.lem}
Let $L$ be a divergence form elliptic operator with ellipticity $\Lambda$ and $x \in \rn$ and $r > 0$. If $Lu =Lv = 0$, $u, v \ge 0$ in $T(x,2r)$, and $u$ and $v$ vanish continuously on $\Delta(x, 2r)$, then $u/v$ is H\"older continuous of order $\gamma = \gamma(n, \Lambda)$ in $\overline{B(x,r) \cap \ree_+}$. In particular, $\lim_{Y \to y} (u/v)(Y)$ exists\footnote{Here the limit is taken within $\ree_+$.} and, moreover,
\begin{equation}\label{QHC.eq}
\left| \frac{u(X)}{v(X)} -  \frac{u((x,r))}{v((x,r))} \right| \le C\left(\frac{|X - x|}{r}\right)^\gamma \frac{u((x,r))}{v((x,r))}, \quad \forall X, Y \in T(x,r),
\end{equation}
where the constant $C >0$ and $\mu \in (0,1)$ depend on $n$, $\Lambda$.
\end{lemma}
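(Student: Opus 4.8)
\emph{Plan of proof.} The plan is to run the classical dyadic oscillation--decay argument for quotients of nonnegative solutions (as in \cite{JKNTA, KenigCBMS}); its only genuinely non-elementary ingredient is the comparison principle, Lemma \ref{comparisonprinc.lem}, and, as observed there, that argument never uses symmetry of $A$ once one works with the Green function of Definition \ref{def:Green}. Set $h:=u/v$. Since $v\ge 0$ is a nontrivial solution vanishing on $\Delta(x,2r)$, the Harnack inequality forces $v>0$ in $T(x,2r)$, so $h$ is well defined and continuous in the interior; after normalizing $u((x,r))/v((x,r))=1$, Lemma \ref{comparisonprinc.lem} gives $C^{-1}\le h\le C$ on $T(x,r)$ with $C=C(n,\Lambda)$.

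\emph{The core step} is the oscillation iteration. I would set $\rho_j:=2^{-j}r$, $M_j:=\sup_{T(x,\rho_j)}h$, $m_j:=\inf_{T(x,\rho_j)}h$, $\omega_j:=M_j-m_j$; these $M_j$ decrease, the $m_j$ increase, and all are finite by the above. If $\omega_j=0$ for some $j$ we are done, so assume $\omega_j>0$. For fixed $j$, $M_jv-u$ and $u-m_jv$ are nonnegative, nontrivial solutions of $Lw=0$ in $T(x,\rho_j)$ vanishing continuously on $\Delta(x,\rho_j)$; applying Lemma \ref{comparisonprinc.lem} on $T(x,\rho_j)$ to the pairs $(M_jv-u,\,v)$ and $(u-m_jv,\,v)$ and evaluating at the corkscrew $(x,\rho_{j+1})$ gives
\[
M_j-m_{j+1}\ \le\ C_1\,(M_j-M_{j+1}),\qquad M_{j+1}-m_j\ \le\ C_1\,(m_{j+1}-m_j),
\]
with $C_1=C_1(n,\Lambda)\ge 1$; adding and rearranging yields $\omega_{j+1}\le\theta\,\omega_j$ with $\theta:=(C_1-1)/(C_1+1)\in[0,1)$. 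Iterating and using $\omega_0\le C$ gives the scale-invariant bound
\[
\osc_{T(x,\rho)\cap\ree_+}h\ \le\ C\Big(\tfrac{\rho}{r}\Big)^{\gamma_0}\,\frac{u((x,r))}{v((x,r))},\qquad 0<\rho\le r,
\]
with $\gamma_0:=\log_2(1/\theta)$ (reduce $\gamma_0$ to $\min(\gamma_0,1/2)$ if necessary). Then $\bigcap_j[m_j,M_j]$ is a single point, so $\lim_{Y\to x}h(Y)$ exists. Re-running the same argument centered at an arbitrary $y\in\overline{\Delta(x,r)}$ with top scale $r/4$ (note $T(y,2\rho)\subset T(x,2r)$ for $\rho\le r/4$), and using Lemma \ref{comparisonprinc.lem} once more to compare $h((y,r/4))$ with $u((x,r))/v((x,r))$, gives the same oscillation bound centered at every such $y$; in particular $h$ extends continuously to $\overline{T(x,r)}$.

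\emph{To upgrade to full H\"older continuity} I would bring in interior regularity. A routine computation shows $h$ solves a divergence-form equation $\div(v^2A\nabla h)=\mathbf b\cdot\nabla h$ in $T(x,2r)$ with $|\mathbf b|\lesssim v\,|\nabla v|$ (the drift vanishes when $A$ is symmetric); on a Whitney ball $B(X,\delta/2)$, $\delta:=\dist(X,\rn)$, Harnack and the interior gradient bound for $v$ give $v^2A\approx_{n,\Lambda}v(X)^2$ and $\delta|\mathbf b|\lesssim v(X)^2$, so after dividing by $v(X)^2$, $h$ solves a uniformly elliptic equation with bounded drift at the natural scale; since constants solve it, De Giorgi--Nash--Moser yields $\osc_{B(X,\rho)}h\le C(\rho/\delta)^{\gamma_1}\osc_{B(X,\delta/4)}h$ for $\rho\le\delta/4$, $\gamma_1=\gamma_1(n,\Lambda)\in(0,1)$. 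Given $X,Y\in T(x,r)$, let $\delta$ be the smaller of their heights, $y_0$ the vertical projection of the lower point, and $N:=u((x,r))/v((x,r))$. In the boundary regime $|X-Y|\gtrsim\delta$ the two points lie in $T(y_0,C|X-Y|)$ and the re-centered oscillation bound --- or, when $|X-Y|\gtrsim r$, the trivial bound using $h\le CN$ on $\overline{T(x,r)}$ --- gives $|h(X)-h(Y)|\lesssim(|X-Y|/r)^{\gamma_0}N$. In the interior regime $|X-Y|\ll\delta$ the ball $B(X,\delta/4)$ sits inside $T(y_0,2\delta)$, so $\osc_{B(X,\delta/4)}h\lesssim(\delta/r)^{\gamma_0}N$ by the re-centered bound (and $\lesssim N$ via Harnack when $X$ is deep in the interior), and the interior estimate then gives
\[
|h(X)-h(Y)|\ \lesssim\ \Big(\tfrac{|X-Y|}{\delta}\Big)^{\gamma_1}\Big(\tfrac{\delta}{r}\Big)^{\gamma_0}N\ \le\ \Big(\tfrac{|X-Y|}{r}\Big)^{\gamma}\,N,\qquad \gamma:=\min(\gamma_0,\gamma_1)\in(0,1),
\]
using $|X-Y|\le\delta\le r$. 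This is \eqref{QHC.eq} in the form valid for all $X,Y\in T(x,r)$, and together with the continuous extension established above it shows that $h=u/v$ is H\"older of order $\gamma$ on $\overline{B(x,r)\cap\ree_+}$.

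\emph{The main obstacle} is not a single estimate: the iteration is routine and, per \cite{JKNTA, KenigCBMS}, symmetry-free given Lemma \ref{comparisonprinc.lem}, which I take as input. The delicate part is the scale-bookkeeping in the last step --- the boundary oscillation estimate is scale-invariant relative to $r$, while interior De Giorgi--Nash regularity is scale-invariant relative to $\dist(\cdot,\rn)$ --- so one must feed the former into the latter (as the size multiplying the interior H\"older seminorm) precisely so that the resulting modulus is $(|X-Y|/r)^\gamma$ rather than the useless $(|X-Y|/\dist(X,\rn))^\gamma$; the attendant two-regime case analysis, though elementary, is where the real work lies.
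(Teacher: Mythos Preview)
The paper does not give its own proof of this lemma; it is stated as a classical fact, attributed to \cite{JKNTA, KenigCBMS}, with the sole remark that the arguments there go through for nonsymmetric $A$ once one uses the Green function of Definition~\ref{def:Green}. So there is no proof in the paper to compare against, and your boundary oscillation iteration via Lemma~\ref{comparisonprinc.lem} is exactly the classical argument the paper has in mind.

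That part of your argument is correct: applying Lemma~\ref{comparisonprinc.lem} to the pairs $(M_jv-u,v)$ and $(u-m_jv,v)$ on $T(x,\rho_j)$ gives the dyadic decay $\omega_{j+1}\le\theta\,\omega_j$, and re-centering at every $y\in\overline{\Delta(x,r)}$ yields the boundary oscillation bound and the continuous extension of $h$. The two-regime gluing at the end is also the right way to reconcile the boundary scale $r$ with the interior scale $\delta$.

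There is, however, a genuine gap in the interior step. You write $\div(v^2A\nabla h)=\mathbf b\cdot\nabla h$ with $\mathbf b=v(A^T-A)\nabla v$ and then invoke an ``interior gradient bound for $v$'' to obtain the pointwise estimate $\delta|\mathbf b|\lesssim v(X)^2$. For merely bounded measurable $A$ there is no pointwise gradient bound on $v$; Caccioppoli gives only $\|\nabla v\|_{L^2(B(X,\delta/2))}\lesssim\delta^{-1}\|v\|_{L^2(B(X,\delta))}$, so $\mathbf b$ is a priori only in $L^2$ on the Whitney ball --- below the threshold $L^p$, $p>n+1$, needed for De Giorgi--Nash--Moser with a drift. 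The fix is to avoid the equation for $h$ altogether and run the \emph{same} iteration on interior balls using the interior Harnack inequality: on $B=B(X,\delta/2)$ the functions $Mv-u$, $u-mv$ (with $M,m=\sup_B h,\inf_B h$) and $v$ are nonnegative solutions of $Lw=0$, and Harnack applied to each on $B'=B(X,\delta/4)$ gives $M-m'\le C'(M-M')$ and $M'-m\le C'(m'-m)$ exactly as in your boundary step; adding yields $\osc_{B'}h\le\theta\,\osc_B h$ and iteration gives $\osc_{B(X,\rho)}h\le C(\rho/\delta)^{\gamma_1}\osc_{B(X,\delta/2)}h$. This is precisely the estimate you feed into the two-regime argument, so with this replacement your proof is complete.
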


\begin{lemma}[Kernel function {\cite{KenigCBMS}}]\label{Kernelcomparehc.lem}
Let $L$ be a divergence form elliptic operator with ellipticity $\Lambda$ and $x \in \rn$ and $r > 0$. For every $X_0 = (x_0, t_0)$ and $X_1 = (x_1, t_1)$ there exists a kernel function $H(X_0,X_1, z)$ (a function of $z$) defined by
\[H(X_0,X_1, z) : = \frac{d\hm_L^{X_0}}{d\hm_L^{X_1}}(z).\]
The kernel function is given by 
\[H(X_0,X_1, z)  = \lim_{Z \to z} \frac{G_L(X_0,Z)}{G_L(X_1,Z)},\]
where the limit is taken inside of $\ree_+$. In particular, by Lemmas \ref{comparisonprinc.lem} and \ref{QHC.lem} for $t' := \min\{t_0,t_1\}/10$ and $|z - z'| < t'$ it holds 
\[|H(X_0,X_1, z) - H(X_0,X_1, z')| \le C\frac{G_L(X_0, (z, t')) }{G_L(X_1, (z, t'))}\left(\frac{|z - z'|}{t'} \right)^\gamma\]
and 
\[C^{-1}\frac{G_L(X_0, (z, t')) }{G_L(X_1, (z, t'))} \le H(X_0,X_1, z) \le C\frac{G_L(X_0, (z, t')) }{G_L(X_1, (z, t'))}  .\]
Here $C$ and $\gamma$ depend on $n$ and $\Lambda$.
\end{lemma}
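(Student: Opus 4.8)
The plan is to assemble the kernel function from three standard ingredients, the point being --- as the paragraph preceding the lemma stresses --- that by Definition \ref{def:Green} the functions $G_L(X_0,\cdot)$ and $G_L(X_1,\cdot)$ are positive solutions of $L^T$ in $\reu$ away from $X_0$ (resp.\ $X_1$) that vanish continuously on $\rn$, so Lemmas \ref{comparisonprinc.lem} and \ref{QHC.lem} apply to the pair $u=G_L(X_0,\cdot)$, $v=G_L(X_1,\cdot)$ on any Carleson region $T(z,2\rho)$ with $X_0,X_1\notin T(z,2\rho)$ (in particular on $T(z,c\min\{t_0,t_1\})$ for a suitable dimensional $c$ and any $z\in\rn$, since $\operatorname{dist}(X_i,(z,0))\ge t_i$). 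First I would fix such a $z$: by Lemma \ref{QHC.lem} the quotient $G_L(X_0,\cdot)/G_L(X_1,\cdot)$ extends from $T(z,\rho)$ to a H\"older continuous function on $\overline{T(z,\rho)}$; letting $z$ vary, this produces a continuous function $h$ on $\rn$ with $h(z)=\lim_{Z\to z}G_L(X_0,Z)/G_L(X_1,Z)$, and the two quantitative displays in the lemma --- the two-sided bound comparing $h(z)$ with $G_L(X_0,(z,t'))/G_L(X_1,(z,t'))$ and the H\"older estimate for $|h(z)-h(z')|$ when $|z-z'|<t'$ --- are exactly Lemmas \ref{comparisonprinc.lem} and \ref{QHC.lem} read on the boundary. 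Thus the whole statement reduces to the identification $H(X_0,X_1,\cdot)=h$.

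Next I would record the mutual absolute continuity of $\hm_L^{X_0}$ and $\hm_L^{X_1}$, which is classical. Given a Borel set $E$ with $\hm_L^{X_1}(E)=0$, one works inside a fixed surface ball $\Delta(z,\rho)$ containing a compact piece of $E$ with $X_0,X_1\notin T(z,2\rho)$, covers that piece (Vitali) by small surface balls $\Delta_j=\Delta(z_j,\rho_j)\subset\Delta(z,\rho)$ with $\sum_j\hm_L^{X_1}(\Delta_j)$ arbitrarily small, and uses Lemma \ref{CFMS.lem} together with the comparison principle Lemma \ref{comparisonprinc.lem} to obtain $\hm_L^{X_0}(\Delta_j)\approx\frac{G_L(X_0,(z_j,\rho_j))}{G_L(X_1,(z_j,\rho_j))}\hm_L^{X_1}(\Delta_j)\lesssim\frac{G_L(X_0,(z,\rho))}{G_L(X_1,(z,\rho))}\hm_L^{X_1}(\Delta_j)$, whence $\hm_L^{X_0}(E)=0$; the reverse inclusion is symmetric. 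Consequently $H(X_0,X_1,\cdot):=\frac{d\hm_L^{X_0}}{d\hm_L^{X_1}}$ exists in $L^1_{\loc}(\hm_L^{X_1})$, defined $\hm_L^{X_1}$-a.e.

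Finally I would identify $H(X_0,X_1,\cdot)$ with the continuous function $h$. Since $\hm_L^{X_1}$ is locally doubling (Lemma \ref{hmdoub.lem}), the Lebesgue differentiation theorem gives $H(X_0,X_1,z)=\lim_{\rho\to0}\hm_L^{X_0}(\Delta(z,\rho))/\hm_L^{X_1}(\Delta(z,\rho))$ for $\hm_L^{X_1}$-a.e.\ $z$, and one must check this limit equals $h(z)$. This is the computation of Kenig \cite{KenigCBMS} (cf.\ Jerison--Kenig \cite{JKNTA}) transcribed for $L$: one represents $\hm_L^{X_0}$ and $\hm_L^{X_1}$ through the Riesz formula \eqref{Rieszform.eq} as the weak conormal traces of $G_L(X_0,\cdot)$ and $G_L(X_1,\cdot)$, uses the first step to write $G_L(X_0,\cdot)=h(z)\,G_L(X_1,\cdot)+E$ on $T(z,\rho)$ with $\|E/G_L(X_1,\cdot)\|_{L^\infty(T(z,\rho))}\to0$ as $\rho\to0$ (by \eqref{QHC.eq} and continuity of $h$), and controls the contribution of the error $E$ --- itself a solution vanishing on $\Delta(z,\rho)$ --- via the energy estimates and CFMS bounds of Lemmas \ref{engcs.lem}, \ref{CFMS.lem} and \ref{CFMSeng.lem}. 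None of this uses symmetry of $A$; it only uses that $G_L(X,\cdot)$ solves $L^T$ and vanishes on $\rn$ (Remark \ref{AvsAt.rmk}, Definition \ref{def:Green}). With $H(X_0,X_1,\cdot)=h$ established, the two estimates asserted in the lemma are precisely what was proved in the first step.

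The main obstacle is this last identification for a non-symmetric operator. Interior pointwise comparability of $G_L(X_0,\cdot)$ and $h(\cdot)G_L(X_1,\cdot)$ does not by itself force their weak conormal derivatives to coincide, so the passage from ``the quotient converges at the boundary'' to ``the boundary measures are related by $h$'' genuinely requires the CFMS/energy machinery rather than a soft limiting argument. Fortunately every tool needed (Lemmas \ref{engcs.lem}, \ref{CFMS.lem}, \ref{CFMSeng.lem}, \ref{hmdoub.lem}, \ref{comparisonprinc.lem}, \ref{QHC.lem}) is already in place, and the argument is the classical one of \cite{KenigCBMS, JKNTA} with every occurrence of ``the Green function'' read as the adjoint Green function of Definition \ref{def:Green}.
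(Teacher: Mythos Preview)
The paper does not give its own proof of this lemma; it is quoted from \cite{KenigCBMS}, with the two displayed estimates simply read off from Lemmas \ref{comparisonprinc.lem} and \ref{QHC.lem} applied to $u=G_L(X_0,\cdot)$ and $v=G_L(X_1,\cdot)$ (both $L^T$-solutions vanishing on $\rn$), exactly as you indicate. Your three-step outline --- boundary H\"older extension of the Green-function quotient, mutual absolute continuity via CFMS plus comparison, and identification of the Radon--Nikodym derivative with the continuous boundary limit $h$ --- is the standard argument from \cite{KenigCBMS, JKNTA}, so there is nothing to contrast. One small remark on your third paragraph: the Riesz-formula computation you sketch yields $\int\phi\,d\omega^{X_0}\big/\int\phi\,d\omega^{X_1}\to h(z)$ for smooth bumps $\phi$ shrinking to $z$, and to upgrade this to $H(X_0,X_1,z)=h(z)$ one combines it with the Lebesgue differentiation theorem for the doubling measure $\omega^{X_1}$ at Lebesgue points of $H$; you hint at this but it is worth making explicit, since (as you correctly flag) pointwise comparability in the interior alone does not directly pin down the conormal trace.
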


We need a kernel function that takes $X_1$ to infinity, in order to compare elliptic measure with fixed poles to that with pole at infinity. We produce this function with the following lemma.
\begin{lemma}[Kernel function with infinite argument]\label{infkernfnlem.lem}
Let $L$ be a divergence form elliptic operator with ellipticity $\Lambda$. Given $X_0 = (x_0, t_0) \in \ree_+$ the kernel function (as a function of $z$) 
\[H_\infty(X_0,z) = \frac{d\hm^{X_0}_L}{d\hm^{\infty}_L} (z)\]
exists as a locally H\"older continuous function of order $\gamma = \gamma(n,\Lambda)$. Moreover, for every $\kappa > 1$ there exists $C_\kappa = C_\kappa(\kappa, n, \Lambda)$ such that
\begin{equation}\label{HCinfkern.eq}
|H_\infty(X_0, z) - H_\infty(X_0, z')| \le C_\kappa \frac{G_L(X_0, (x_0, t_0/4))}{U(X_0)} \left(\frac{|z - z'|}{t_0}\right)^\gamma 
\end{equation}
for all $z,z' \in \Delta(x_0, 5\kappa t_0)$, $|z - z'| < t_0/4$ and
\begin{equation}\label{compinfkern.eq}
(C_\kappa)^{-1} \frac{G_L(X_0, (x_0, t_0/4))}{U(X_0)} \le H_\infty(X_0, z) \le C_\kappa \frac{G_L(X_0, (x_0, t_0/4))}{U(X_0)}.
\end{equation}
for all $z \in \Delta(x_0, 5\kappa t_0)$. Here $U$ is the Green function for $L$ with pole at infinity, see Lemma \ref{def:Greeninf}.
\end{lemma}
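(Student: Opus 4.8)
The plan is to \emph{define} the kernel function directly in terms of Green functions and then identify it with the Radon--Nikodym derivative by a weak-$*$ limiting argument. Since $U$ solves $L^TU=0$ in $\ree_+$ and vanishes continuously on $\rn$, while $G_L(X_0,\cdot)$ is a positive solution of $L^Tu=0$ in $\ree_+\setminus\{X_0\}$ that also vanishes on $\rn$, Lemma \ref{QHC.lem} (applied to $L^T$ on Carleson regions $T(w,2\rho)$ that do not contain $X_0$) shows that the quotient $Z\mapsto G_L(X_0,Z)/U(Z)$ extends to a function that is Hölder continuous of order $\gamma=\gamma(n,\Lambda)$ up to $\rn$, locally away from $X_0$. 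I would therefore set
\[
H_\infty(X_0,z):=\lim_{Z\to z}\frac{G_L(X_0,Z)}{U(Z)},
\]
which gives existence and local Hölder continuity for free; it remains to prove the two quantitative bounds and the identification with $d\hm^{X_0}_L/d\hm^\infty_L$.

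For the comparison estimate \eqref{compinfkern.eq}, first treat $z\in\Delta(x_0,t_0/4)$: the region $T(x_0,t_0/2)$ does not contain $X_0$ (as $t_0/2<t_0$), so Lemma \ref{comparisonprinc.lem} applied to $u=G_L(X_0,\cdot)$, $v=U$ gives $H_\infty(X_0,z)\approx G_L(X_0,(x_0,t_0/4))/U((x_0,t_0/4))$, and the Harnack inequality along the vertical segment from $(x_0,t_0/4)$ to $X_0$ replaces $U((x_0,t_0/4))$ by $U(X_0)$. For $z\in\Delta(x_0,5\kappa t_0)$ with $|z-x_0|\ge t_0/4$, note $\dist(X_0,(z,0))\ge t_0$, so $T(z,t_0/2)$ avoids $X_0$ and Lemma \ref{comparisonprinc.lem} gives $H_\infty(X_0,z)\approx G_L(X_0,(z,t_0/4))/U((z,t_0/4))$; a Harnack chain of length $\lesssim_\kappa 1$ connecting $(z,t_0/4)$ to $(x_0,t_0/4)$ horizontally at height $\sim t_0$ (staying distance $\gtrsim t_0$ from $X_0$ and $\gtrsim t_0$ from $\rn$) then produces the constant $C_\kappa$. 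The Hölder bound \eqref{HCinfkern.eq} is obtained the same way: for $z,z'\in\Delta(x_0,5\kappa t_0)$ with $|z-z'|<t_0/4$, the Carleson region $T(z,t_0/2)$ does not contain $X_0$, so the quantitative Hölder estimate of Lemma \ref{QHC.lem} applied to $G_L(X_0,\cdot)/U$ there, combined with Lemma \ref{comparisonprinc.lem} to control the reference quotient, yields
\[
|H_\infty(X_0,z)-H_\infty(X_0,z')|\lesssim\Big(\tfrac{|z-z'|}{t_0}\Big)^\gamma\,\frac{G_L(X_0,(z,t_0/4))}{U((z,t_0/4))},
\]
and one last Harnack chain of length $\lesssim_\kappa 1$ replaces the final quotient by $C_\kappa\,G_L(X_0,(x_0,t_0/4))/U(X_0)$.

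For the identification, I would pass to the limit in the finite-pole formula of Lemma \ref{Kernelcomparehc.lem}. Using the sequence $(0,2^{k_j})$ and the normalization from Lemma \ref{def:Greeninf}, for each $j$ one has the identity of measures
\[
d\hm^{X_0}_L(z)=g_j(z)\,d\widetilde\mu_j(z),\qquad g_j(z):=G_L((0,2^{k_j}),(0,1))\,H(X_0,(0,2^{k_j}),z),
\]
where $\widetilde\mu_j:=G_L((0,2^{k_j}),(0,1))^{-1}\hm^{(0,2^{k_j})}_L\rightharpoonup\hm^\infty_L$, and $g_j(z)=\lim_{Z\to z}G_L(X_0,Z)/u_{k_j}(Z)$ with $u_{k_j}$ as in Lemma \ref{def:Greeninf}. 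So it suffices to show $g_j\to H_\infty(X_0,\cdot)$ uniformly on compact subsets of $\rn$: on any Carleson region $T(w,2\rho)$ avoiding $X_0$ and (for $j$ large) the poles $(0,2^{k_j})$, the quotients $G_L(X_0,\cdot)/u_{k_j}$ are positive, vanish on the boundary, and enjoy Hölder bounds and two-sided size bounds that are uniform in $j$ (Lemmas \ref{QHC.lem} and \ref{comparisonprinc.lem}, whose constants do not depend on the particular solutions); Arzelà--Ascoli together with the interior pointwise convergence $u_{k_j}\to U$ forces the whole sequence to converge uniformly up to the boundary to $G_L(X_0,\cdot)/U$, i.e. $g_j\to H_\infty(X_0,\cdot)$ locally uniformly. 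Since a locally uniformly convergent sequence of continuous functions tested against a weak-$*$ convergent, locally uniformly bounded sequence of Radon measures converges to the product of the limits, we get $d\hm^{X_0}_L=H_\infty(X_0,z)\,d\hm^\infty_L(z)$, which is the assertion.

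The step I expect to be the main obstacle is this last identification, specifically verifying that the boundary Hölder estimates for $G_L(X_0,\cdot)/u_{k_j}$ are genuinely uniform in $j$ (so that Arzelà--Ascoli applies up to the boundary, not just in the interior) and that the masses $\widetilde\mu_j(K)$ are uniformly bounded on compacts, so that the limit of the products equals the product of the limits. By contrast, \eqref{compinfkern.eq} and \eqref{HCinfkern.eq} are routine boundary-Harnack and Harnack-chain bookkeeping; the only point of care there is that every chain used to reach scale $5\kappa t_0$ has length controlled by $\kappa$, which is precisely the source of the constant $C_\kappa$.
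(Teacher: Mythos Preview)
Your proposal is correct and follows essentially the same route as the paper: both use the finite-pole identity $d\hm^{X_0}=H_j\,d\hm_j$, uniform-in-$j$ boundary Harnack/H\"older bounds for the $H_j$ (i.e.\ your $g_j$), Arzel\`a--Ascoli, and a weak-$*$ passage with CFMS+Harnack controlling $\hm_j(\Delta(0,R))$. The only cosmetic difference is that you define $H_\infty$ directly as the boundary trace of $G_L(X_0,\cdot)/U$ and then argue $g_j\to H_\infty$, whereas the paper first extracts $H_\infty$ as an Arzel\`a--Ascoli limit of the $H_j$; one small slip to fix is the phrase that the quotients $G_L(X_0,\cdot)/u_{k_j}$ ``vanish on the boundary''---it is the numerator and denominator that vanish, while the quotient has a positive continuous boundary trace.
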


\begin{proof}
We drop the subscript $L$ from the Green function and the elliptic measure throughout the proof. Let $\kappa > 1$. We recall that $U(Y) = \lim_{j \to \infty} u_{k_j}(Y) :=\lim_{j \to \infty} \frac{G((0,2^{k_j}), Y)}{G((0,2^{k_j}), (0,1))}$ and $\hm^\infty$ is the weak limit of $\frac{\hm^{(0,2^{k_j})}}{G((0,2^{k_j}), (0,1))}$. Set $X_j := (0,2^{k_j})$, $u_j = u_{k_j}$ and $\hm_j = \frac{\hm^{(0,2^{k_j})}}{G((0,2^{k_j}), (0,1))} = \frac{\hm^{X_j}}{G((0,2^{k_j}), (0,1))} $. We will often use that the $t$-coordinate of $X_j$ tends to infinity. 

Now, by Lemma \ref{Kernelcomparehc.lem},
\[\frac{d\hm^{X_0}}{d\hm^{X_j}}(z) = H(X_0,X_j, z)\]
exists as a H\"older continuous function. Multiplying, by $G(X_j, (0,1))$  ($=G((0,2^{k_j}), (0,1))$) we have 
\[\frac{d\hm^{X_0}}{d\hm_j}(z) = G(X_j, (0,1))H(X_0,X_j, z) =: H_j(X_0,z)\]
is locally H\"older continuous. More specifically, provided $2^{k_j} > t_0$, Lemmas \ref{comparisonprinc.lem} gives the estimates
\[ |H_j(X_0,z) - H_j(X_0,z')| \le M_j\left(\frac{|z - z'|}{t_0} \right)^\gamma \]
for $z, z' \in \Delta(x_0, 10\kappa t_0)$, $|z - z'| < t_0/4$
and 
\[m_j \le |H_j(X_0,z)| \le M_j\]
for $z \in \Delta(x_0, 10\kappa r)$,
where $M_j$ and $m_j$ depend on $\kappa$, $j$, $n$ and $\Lambda$ and are defined by
\[M_j := \sup\left\{ \frac{G(X_j, (0,1))}{G(X_j, (y,t_0/4))} G(X_0, (y,t_0/4)): y \in \Delta(x_0, 10\kappa t_0)\right \} \]
and 
\[m_j := \inf\left\{ \frac{G(X_j, (0,1))}{G(X_j, (y,t_0/4))} G(X_0, (y,t_0/4)): y \in \Delta(x_0, 10\kappa t_0) \right\}.\]
By the Harnack inequality 
\[M_j  \approx m_j \approx  \frac{G(X_j, (0,1))}{G(X_j, (x_0,t_0/4))} G(X_0, (x_0,t_0/4))  \]
where the implicit constants depend only on $\kappa$, $n$, $\Lambda$, but not on $j$. On the other hand,
\[\lim_{j \to \infty} \frac{G(X_j, (0,1))}{G(X_j, (x_0,t_0/4))} G(X_0, (x_0,t_0/4))  = \frac{G_L(X_0, (x_0, t_0/4))}{U(x_0,t_0/4))} \approx   \frac{G_L(X_0, (x_0, t_0/4))}{U(X_0)}.\]
Thus, there exists $C_\kappa$ such that for all sufficiently large $j$
\begin{equation}\label{HCinfkernj.eq}
|H_j(X_0, z) - H_j(X_0, z')| \le C_\kappa \frac{G_L(X_0, (x_0, t_0/4))}{U(X_0)} \left(\frac{|z - z'|}{t_0}\right)^\gamma 
\end{equation}
for all $z,z' \in \overline{\Delta(x_0, 5\kappa t_0)}$, $|z - z'| < t_0/4$ and
\begin{equation}\label{compinfkernj.eq}
 (C_\kappa)^{-1} \frac{G_L(X_0, (x_0, t_0/4))}{U(X_0)} \le H_j(X_0, z) \le C_\kappa \frac{G_L(X_0, (x_0, t_0/4))}{U(X_0)}.
\end{equation}
for all $z \in \overline{\Delta(x_0, 5\kappa t_0)}$. Since $\kappa > 1$ is arbitrary, we may use the Arzela-Ascoli theorem to produce a subsequence  $H_{j_m}(X_0, z)$ converging to a function $H_\infty(X_0,z)$ locally uniformly and such that for fixed $\kappa > 1$ estimates \eqref{HCinfkernj.eq} and \eqref{compinfkernj.eq} hold. Moreover, by definition of $H_j$
\begin{equation}\label{Hiotherway.eq}
d \hm^{X_0} = H_j(X_0,z) d\hm_j
\end{equation}
and since $\hm_j \rightharpoonup \hm_\infty$ it holds that 
\[d \hm^{X_0} = H_\infty(X_0,z) d\hm_\infty.\]
Indeed, fix $f \in C_c(\rn)$ and $R > 0$ so that $\supp f \subset \Delta(0,R)$. Note that $fH_\infty(X_0,\cdot) \in C_c(\rn)$. Then by \eqref{Hiotherway.eq} and the fact that $H_{j_m}(X_0, z)$ converges locally uniformly to $H_\infty(X_0,z)$
\begin{align*}
&\int_{\rn} f d\hm^{X_0} \, dx = \lim_{m \to \infty} \int_{\rn} f H_{j_m}(X_0,z) d\hm_{j_m}
\\& = \lim_{m \to \infty} \int_{\rn} [f H_\infty(X_0,z)] d\hm_{j_m} + \lim_{m \to \infty} \int_{\rn} [f (H_{j_m}(X_0,z) - H_\infty(X_0,z))] d\hm_{j_m}
\\ & = \int_{\rn} f H_\infty(X_0,z) \, d\hm^\infty + 0,
\end{align*}
where, to show the second limit in the second line was zero, we used 
\begin{multline*}
\left| \int_{\rn} [f (H_{j_m}(X_0,z) - H_\infty(X_0,z))] d\hm_{j_m} \right|
\\  \le \|f\|_\infty \sup_{z \in \Delta(0,R)}|H_{j_m}(X_0,z) - H_\infty(X_0,z))| \hm_{j_m}(\Delta(0,R))
\end{multline*}
and that $\hm_{j_m}(\Delta(0,R))$ is uniformly bounded  (in $m$) for sufficiently large $m$. To see the later fact, we first write
\[\hm_{j_m}(\Delta(0,R))) = \frac{\hm^{X_{j_m}}(\Delta(0,R))}{G(X_{j_m}, (0,1))}\]
then the CFMS estimates (Lemma \ref{CFMS.lem}) and the Harnack inequality give
\[\frac{\hm^{X_{j_m}}(\Delta(0,R))}{G(X_{j_m}, (0,1))} \lesssim R^{n-1} \frac{G(X_{j_m}, (0,R))}{G(X_{j_m}, (0,1))} \lesssim C_R\]
for sufficiently large $m$.
As all of the desired properties of $H_\infty(X_0,z)$ have been demonstrated, this proves the lemma.
\end{proof}

\begin{theorem}\label{mainvmopt2.thrm}
Let $L = -\div A\nabla$ be a divergence form elliptic operator on $\ree_+$, whose coefficient matrix $A$ satisfies the vanishing weak DKP condition. Let $\hm^{X_0}_L$ be the elliptic measure with pole at ${X_0}$. Then $\hm^{X_0}_L \ll \mathcal{L}^n$, and $k^{X_0}_L(y) := \tfrac{d\hm^{X_0}_L}{dx}(y)$ has the property that $\log k^{X_0}_L \in VMO_{loc}(\rn)$. 
\end{theorem}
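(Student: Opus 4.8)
The plan is to bootstrap from the pole-at-infinity case, Theorem \ref{mainvmopt1.thrm}, via the kernel function $H_\infty(X_0,\cdot)$ constructed in Lemma \ref{infkernfnlem.lem}. From Theorem \ref{mainvmopt1.thrm} we already have $\hm^\infty_L \ll \mathcal{L}^n$, with density $k^\infty_L = \tfrac{d\hm^\infty_L}{dx}$, and $\log k^\infty_L \in VMO(\rn)$. By Lemma \ref{infkernfnlem.lem} the kernel function $H_\infty(X_0,\cdot) = \tfrac{d\hm^{X_0}_L}{d\hm^\infty_L}$ exists, and for each $\kappa > 1$ it is H\"older continuous of order $\gamma = \gamma(n,\Lambda)$ on $\Delta(x_0, 5\kappa t_0)$ and, by \eqref{compinfkern.eq}, bounded there above and below by positive multiples of $\tfrac{G_L(X_0,(x_0,t_0/4))}{U(X_0)}$. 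Hence $d\hm^{X_0}_L = H_\infty(X_0,z)\,d\hm^\infty_L = H_\infty(X_0,z)\,k^\infty_L(z)\,dz$, which already yields $\hm^{X_0}_L \ll \mathcal{L}^n$ together with the pointwise factorization $k^{X_0}_L(z) = H_\infty(X_0,z)\,k^\infty_L(z)$ for a.e.\ $z$.

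First I would take logarithms in this factorization, so that $\log k^{X_0}_L = \log H_\infty(X_0,\cdot) + \log k^\infty_L$ a.e.\ on $\rn$. Since $VMO_{loc}(\rn)$ is a vector space (oscillation averages are subadditive) and $VMO(\rn) \subset VMO_{loc}(\rn)$, and $\log k^\infty_L \in VMO(\rn)$ is in hand, it remains only to place $\log H_\infty(X_0,\cdot)$ in $VMO_{loc}(\rn)$. Fix a compact $K \subset \rn$ and pick $\kappa > 1$ so large that $K \subset \Delta(x_0, 5\kappa t_0)$; on that ball $H_\infty(X_0,\cdot)$ takes values in a compact subinterval of $(0,\infty)$ by \eqref{compinfkern.eq} and is $\gamma$-H\"older by \eqref{HCinfkern.eq}, so $\log H_\infty(X_0,\cdot)$ — being the composition of $H_\infty(X_0,\cdot)$ with the logarithm, which is Lipschitz on that subinterval — is itself $\gamma$-H\"older there. (Combining \eqref{HCinfkern.eq} and \eqref{compinfkern.eq}, the H\"older seminorm comes out $\lesssim_\kappa t_0^{-\gamma}$, with no residual dependence on $X_0$.) Consequently, for $x \in K$ and $r$ small enough that $\Delta(x,r) \subset \Delta(x_0, 5\kappa t_0)$,
\[
\fint_{\Delta(x,r)} \Bigl| \log H_\infty(X_0,z) - \fint_{\Delta(x,r)} \log H_\infty(X_0,w)\, dw \Bigr| \, dz \;\lesssim_\kappa\; \Bigl( \tfrac{r}{t_0} \Bigr)^{\gamma} \xrightarrow[r \to 0^+]{} 0
\]
uniformly over $x \in K$, which is exactly the defining condition of $VMO_{loc}(\rn)$ in Definition \ref{BVMO.def} (note $\log H_\infty(X_0,\cdot)$ is continuous, hence in $L^1_{loc}$). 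Adding back $\log k^\infty_L$ gives $\log k^{X_0}_L \in VMO_{loc}(\rn)$, as claimed.

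The substantive work has already been absorbed into Lemma \ref{infkernfnlem.lem} — the construction of the kernel function with pole at infinity and the H\"older and two-sided comparison bounds that are \emph{uniform in the scale parameter $\kappa$} — so the only point in the argument above that needs a little care is that these bounds hold on \emph{every} ball $\Delta(x_0, 5\kappa t_0)$, which is precisely what lets $\kappa$ be chosen \emph{after} the compact set $K$ and hence exhausts all of $\rn$; the increment restriction $|z-z'| < t_0/4$ in \eqref{HCinfkern.eq} is immaterial since for fixed $K$ we apply it only as $r \to 0^+$. Note also that $\log H_\infty(X_0,\cdot)$ is in general neither bounded nor in $BMO(\rn)$, because the constants $C_\kappa$ in \eqref{compinfkern.eq} blow up as $\kappa \to \infty$; thus the local statement is the correct one for finite poles, consistent with the merely local doubling of $\hm^{X_0}_L$ (Lemma \ref{hmdoub.lem}). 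A slightly longer alternative would skip the factorization and verify the localized Korey criterion of Theorem \ref{VMOlocKorey.thrm} for $k^{X_0}_L$ on each $\Delta(0,R)$ directly, using that $H_\infty(X_0,\cdot)$ is essentially constant on small balls to reduce the asymptotic $RH_2$-ratio of $k^{X_0}_L$ to that of $k^\infty_L$ (which holds by Theorems \ref{mainvmopt1.thrm} and \ref{Koreyequiv.thrm}); but the additive decomposition above is cleaner.
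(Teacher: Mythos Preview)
Your proof is correct. Both your argument and the paper's rely on the factorization $k^{X_0}_L = H_\infty(X_0,\cdot)\,k^\infty_L$ furnished by Lemma \ref{infkernfnlem.lem}, but the routes diverge from there. The paper passes through the $RH_2$ characterization of $VMO_{loc}$ (Theorem \ref{VMOlocKorey.thrm}): it bounds the ratio $\bigl(\fint (k^{X_0}_L)^2\bigr)^{1/2}\big/\fint k^{X_0}_L$ by $\bigl(\sup H_\infty/\inf H_\infty\bigr)$ times the corresponding ratio for $k^\infty_L$, and then uses \eqref{HCinfkern.eq}--\eqref{compinfkern.eq} to force the first factor toward $1$ at small scales. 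Your additive decomposition $\log k^{X_0}_L = \log H_\infty(X_0,\cdot) + \log k^\infty_L$ is the ``slightly longer alternative'' the paper's proof is, viewed from your side: you bypass Korey's characterization entirely and work straight from Definition \ref{BVMO.def}, exploiting that local H\"older continuity plus positive upper and lower bounds trivially put $\log H_\infty(X_0,\cdot)$ in $VMO_{loc}$. This is more elementary and slightly cleaner; the paper's route, on the other hand, makes explicit the asymptotic reverse-H\"older behavior of $k^{X_0}_L$, which ties in with the discussion in Section \ref{kpdpp.sect}. You correctly identify and handle the two minor technical points (choosing $\kappa$ after $K$; the increment restriction $|z-z'|<t_0/4$ being harmless as $r\to 0^+$), and your closing remark that only the local conclusion is available for finite poles is apt.
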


\begin{proof}
To ease notation we drop the subscript $L$ in the proof. 
By Theorem \ref{mainvmopt2.thrm} $\hm^\infty \ll \mathcal{L}^n$ and $k^\infty := \frac{d\hm^\infty}{dx}$ satisfies $\log k^\infty \in VMO$. Then by Theorem \ref{Koreyequiv.thrm} it holds that
\begin{equation}\label{kinftyisVMORH2.eq}
\lim_{r_0 \to 0^+} \sup_{x \in \rn} \sup_{r \in (0,r_0)} \frac{\left(\fint_{\Delta(x,r)} (k^\infty(z))^2 \, dz \right)^{1/2}}{\fint_{\Delta(x,r)}  k^\infty(z) \, dz}  = 1.
\end{equation}
It then follows from Lemma \ref{infkernfnlem.lem} that $k^{X_0} = \frac{d \hm^X}{dx}$ exists $\mathcal{L}^n$ a.e. with
\[k^{X_0}(z)= \frac{d\hm^{X_0}}{dx}(z)  = \frac{d\hm^{X_0}}{d\hm^\infty}(z)\frac{d\hm^\infty}{dx}(z)  = H_\infty(X_0, z) k^\infty(z).\]
By Theorem \ref{VMOlocKorey.thrm} it suffices to show that for fixed $R > 0$ and $\epsilon > 0$ there exists $r_0 > 0$ such that
\begin{equation}\label{finitepoleVMOgoal.eq}
\sup_{x \in \Delta(0,R)} \sup_{r \in (0,r_0)}  \frac{\left(\fint_{\Delta(x,r)} (k^{X_0}(z))^2 \, dz \right)^{1/2}}{\fint_{\Delta(x,r)} k^{X_0}(z) \, dz} \le (1 + \epsilon)^2.
\end{equation}
To this end, let $R, \epsilon > 0$ be fixed. By \eqref{kinftyisVMORH2.eq} there exists $r_1$ such that 
\begin{equation}\label{kinftyisVMORH2rone.eq}
\sup_{x \in \rn} \sup_{r \in (0,r_1)} \frac{\left(\fint_{\Delta(x,r)} (k^\infty(z))^2 \, dz \right)^{1/2}}{\fint_{\Delta(x,r)}  k^\infty(z) \, dz} \le 1 + \epsilon.
\end{equation}
Write $X_0 = (x_0,t_0)$ and let $\kappa$ be large enough (depending on $R$) so that $\Delta(0,10R) \subset \Delta(x_0, \kappa t_0)$. By Lemma \ref{infkernfnlem.lem}, that is, estimates \eqref{HCinfkern.eq} and \eqref{compinfkern.eq} there exists a constant $C'$, depending on $\kappa$, $n$ and ellipticity, such that for $z, z' \in \Delta(x_0, 5\kappa t_0)$ with $|z - z'| < t_0/4$
\begin{equation}\label{infkerncombo.eq}
\left| \frac{H_\infty(X_0, z)}{H_\infty(X_0, z')} - 1\right| = \frac{1}{H_\infty( X_0, z')} \left|H_\infty(X_0, z) - H_\infty(X_0, z') \right| \lesssim C' \left(\frac{|z - z'|}{t_0}\right)^{\gamma}. 
\end{equation}
Let $r_2 \in (0, t_0/4) $ be such that $C' \left(\frac{2 r_2}{t_0}\right)^{\gamma} < \epsilon$. Then for $r <  \min\{r_1,r_2, 9R\} =: r_0$ and $x \in \Delta(0,R)$ it holds
\begin{align*}
 \frac{\left(\fint_{\Delta(x,r)} (k^{X_0}(z))^2 \, dz \right)^{1/2}}{\fint_{\Delta(x,r)} k^{X_0}(z) \, dz} &=  \frac{\left(\fint_{\Delta(x,r)} (H_\infty(X_0, z) k^\infty(z))^2 \, dz \right)^{1/2}}{\fint_{\Delta(x,r)} H_\infty(X_0, z) k^\infty(z) \, dz}
 \\ & \le \frac{\sup_{z \in \Delta(x,r)}H(z)}{\inf_{z \in \Delta(x,r)}H(z')} \frac{\left(\fint_{\Delta(x,r)} (k^\infty(z))^2 \, dz \right)^{1/2}}{\fint_{\Delta(x,r)}  k^\infty(z) \, dz}
 \\ & \le (1 + \epsilon)^2,
 \end{align*}
where we used the choice of $r_0$, \eqref{infkerncombo.eq} and \eqref{kinftyisVMORH2rone.eq}. This shows \eqref{finitepoleVMOgoal.eq} and hence $\log k^{X_0} \in VMO_{loc}(\rn)$.
\end{proof}

Combining Theorems \ref{mainvmopt1.thrm} and \ref{mainvmopt2.thrm} gives Theorem \ref{mainvmo.thrm}.

\section{Globalizing local DKP conditions and the works of Kenig and Pipher and Dindos, Petermichl and Pipher}\label{kpdpp.sect}

In this section, we reflect on the relationship between our results and the related works \cite{KP,DPP}. 
The results in \cite{KP,DPP} are for Lipschitz domains, which requires one to obtain localized estimates; however, our Theorems \ref{mainvmopt1.thrm} and \ref{mainvmopt2.thrm} are for operators that are defined globally in a half space. To bridge the gap we show how to extend coefficients satisfying a local weak DKP condition to globally defined coefficients. We then proceed to show a local version of Theorem \ref{KPalt.thrm}, which was originally shown in \cite{KP}, under the hypothesis of a gradient condition on the coefficients. This condition trivially controls the weak DKP coefficients, that is, the $\alpha_2$-numbers (and also the $\alpha_\infty$-numbers, defined in \eqref{def:oscinf} below).

The result of \cite{DPP} is a small constant version of \cite{KP}. To be precise, in \cite{DPP} the authors show that the $L^p$-Dirichlet problem is solvable for any fixed $p > 1$ {\it provided} the Carleson norm in a related weak DKP-type condition is sufficiently small. In \cite{DPP} the authors use a condition that is comparable to using $\alpha_\infty$ coefficients defined by
\begin{equation}\label{def:oscinf}
	\alpha_\infty(x,r) := \inf_{A_0 \in \mathfrak{A}(\Lambda)}\sup_{(y,s) \in W(x,r)} |A(y,s) - A_0|.
\end{equation} 
(Note that $\alpha_2(x,r)  \le \alpha_\infty(x,r)$ so that the coefficients used in the current work are controlled by those in \cite{DPP}.)
Equivalently, they show a local $L^{p'}$ reverse-H\"older inequality for the elliptic kernel, under this smallness assumption. Recent work by the first author with Egert and Saari \cite{BES} seems to indicate that our main quantitative estimate, Theorem \ref{mainquantest.thrm}, provides an alternative approach to their result (and control on the constant in the reverse H\"older inequality) in the upper half-space. We discuss this briefly in Section \ref{dppsect.sect}.

\subsection{Extending local DKP conditions and an alternative approach to Kenig and Pipher's theorem}\label{kpsecondlook.sect} In this subsection we show how to extend coefficients $A$ satisfying the weak DKP condition on a Carleson region $T(x,r)$ to coefficients $\widetilde{A}$ so that $\widetilde{A}$ agrees with $A$ on a smaller Carleson region $T(x,cr)$ and satisfies a global weak DKP condition. We also want to ensure that we do not increase the the constant in the weak DKP condition `too much'. Then we will show how to use this to show an analogue of the main result in \cite{KP}.

For the purposes of constructing these extensions we write $\alpha_2(x,r,A)$ and $\alpha_2(x,r, \widetilde{A})$ to denote the $\alpha_2$ coefficients for $A$ and $\widetilde{A}$ respectively. For instance,
\[\alpha_2(x,r,\widetilde{A}) = \inf_{A_0 \in \mathfrak{A}(\Lambda)}\left(\fiint_{(y,s) \in W(x,r)} |\widetilde{A}(y,s) - A_0|^2 \right)^{1/2}.\]
We also define for $x \in \rn$ and $r > 0$ the cylindrical region
\[\Gamma(x,r) = \Delta(x,r) \times (0, r).\]

\begin{lemma}\label{localizeDKP.lem}
Let $A$ be a matrix that satisfies the weak DKP condition on $T(x_0,R_0)$ for some $x_0 \in \rn$ and $R_0> 0$.  There exists $\widetilde{A}$ such that $\widetilde{A} = A$ on $\Gamma(x_0, cR_0)$ and $\widetilde{A}$ satisfies the weak DKP condition on $\ree_+$, where $c$ is an absolute constant. Moreover, we have the estimates
\begin{equation}\label{localdkplem1.eq}
\begin{split}
\left\| \alpha_2(x,r,\widetilde{A})^2 \, \frac{dx \, dr}{r}\right\|_{\mathcal{C}} &\lesssim \left\| \alpha_2(x,r,A)^2 \, \frac{dx \, dr}{r}\right\|_{\mathcal{C}(\Delta(x_0,R_0))}  
\\ & \quad + \min\left\{1, \left\| \alpha_2(x,r,A)^2 \, \frac{dx \, dr}{r}\right\|_{\mathcal{C}(\Delta(x_0,R_0))}^{4/(n+3)}  \right\} 
\end{split}
\end{equation}
and for $r_0 < R_0$
\begin{equation}\label{localdkplem2.eq}
\sup_{x \in \rn} \left\| \alpha_2(x,r,\widetilde{A})^2 \, \frac{dx \, dr}{r}\right\|_{\mathcal{C}(\Delta(x,r_0))} \lesssim \sup_{\substack{\Delta \subset \Delta(x_0,R_0) \\ r(\Delta) \le r_0}}\left\| \alpha_2(x,r,A)^2 \, \frac{dx \, dr}{r}\right\|_{\mathcal{C}(\Delta(x_0,R_0))}  + (r_0/R_0)^2, 
\end{equation}
where the implicit constants depend on dimension and ellipticity. 
\end{lemma}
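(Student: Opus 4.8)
The plan is to construct $\widetilde A$ by interpolating, over a Whitney shell around $x_0$ of size comparable to $R_0$, between $A$ and a single constant matrix that approximates $A$ well on $T(x_0,R_0)$. Write $\mathcal N_0:=\|\alpha_2(x,r,A)^2\,\tfrac{dx\,dr}{r}\|_{\mathcal C(\Delta(x_0,R_0))}$. When $\mathcal N_0\ge 1$ the term $\min\{1,\mathcal N_0^{4/(n+3)}\}$ makes \eqref{localdkplem1.eq} just a finiteness statement, which I would handle by a crude interpolation with a fixed cutoff, so afterwards I may assume $\mathcal N_0\le 1$. Applying Lemma \ref{gammabdalpha.lem} with $\Delta_0=\Delta(x_0,R_0/3)$ (so that $3\Delta_0=\Delta(x_0,R_0)$) gives $\gamma(x,r)^2\lesssim\mathcal N_0$ for all $(x,r)\in T(x_0,R_0/3)$; choosing $A_0\in\mathfrak{A}(\Lambda)$ to attain $\gamma(x_0,R_0/4)$ then yields $\fiint_{T(x_0,R_0/4)}|A-A_0|^2\lesssim\mathcal N_0$, and the same lemma keeps $A$ within $O(\sqrt{\mathcal N_0})$ (up to a logarithmic factor) of the single matrix $A_0$ at every interior scale of $T(x_0,R_0/4)$. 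I would then fix a radial $\Psi\in C^\infty_c(B(x_0,R_0/16))$ with $\Psi\equiv 1$ on $B(x_0,R_0/32)$, $0\le\Psi\le1$, $|\nabla\Psi|\lesssim R_0^{-1}$, and set $\widetilde A:=\Psi A+(1-\Psi)A_0$. Since pointwise convex combinations of $\Lambda$-elliptic matrices are $\Lambda$-elliptic, $\widetilde A$ is $\Lambda$-elliptic; it equals $A_0$ (hence is trivially weak DKP) off $B(x_0,R_0/16)$, and equals $A$ on $B(x_0,R_0/32)\cap\ree_+\supset\Gamma(x_0,cR_0)$ for a small absolute constant $c$.

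The main work will be a Whitney-box-by-Whitney-box estimate of $\alpha_2(x,r,\widetilde A)$. If $W(x,r)$ is disjoint from $\supp\nabla\Psi$, then by connectedness $\widetilde A$ equals $A$ on $W(x,r)$ — and then $W(x,r)\subset T(x_0,R_0)$, so $\alpha_2(x,r,\widetilde A)=\alpha_2(x,r,A)$ is controlled by the weak DKP hypothesis on $A$ — or $\widetilde A\equiv A_0$ on $W(x,r)$, giving $\alpha_2(x,r,\widetilde A)=0$. If $W(x,r)$ meets $\supp\nabla\Psi$ then it lies at distance $\sim R_0$ from the boundary; testing against $A_\ast:=\Psi_W A_W+(1-\Psi_W)A_0\in\mathfrak{A}(\Lambda)$, where $A_W$ is a best constant approximant of $A$ on $W(x,r)$ and $\Psi_W=\fiint_{W(x,r)}\Psi$, and using the identity $\widetilde A-A_\ast=\Psi(A-A_W)+(\Psi-\Psi_W)(A_W-A_0)$ together with $\osc_{W(x,r)}\Psi\lesssim r/R_0$, gives, for $r$ below a fixed multiple of $R_0$ (so that $W(x,r)\subset T(x_0,R_0/4)$), $\alpha_2(x,r,\widetilde A)^2\lesssim\alpha_2(x,r,A)^2+(r/R_0)^2\fiint_{W(x,r)}|A-A_0|^2$; for the remaining (large) range of $r$ one uses instead $\alpha_2(x,r,\widetilde A)^2\le\fiint_{W(x,r)}\Psi^2|A-A_0|^2\le|W(x,r)|^{-1}\iint_{T(x_0,R_0/16)}|A-A_0|^2\lesssim(R_0/r)^{n+1}\mathcal N_0$. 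The quantity $\fiint_{W(x,r)}|A-A_0|^2$ I would estimate by the trivial bound $\Lambda^2$, by the volume ratio $(R_0/r)^{n+1}\mathcal N_0$, and — to bring out the sharp power of $\mathcal N_0$ — by the fact that $A$ stays telescopically close to the single constant $A_0$ throughout $T(x_0,R_0/4)$.

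Finally I would integrate $\tfrac{dx\,dr}{r}$ and take Carleson norms. The $\alpha_2(x,r,A)^2$-type contributions are supported on $(x,r)$ with $W(x,r)\subset T(x_0,R_0)$, and for such $(x,r)$ the localized Carleson norm is comparable to $\|\alpha_2(x,r,A)^2\tfrac{dx\,dr}{r}\|_{\mathcal C(\Delta(x_0,R_0))}=\mathcal N_0$, which gives the first term in \eqref{localdkplem1.eq}. For the ``transition'' part I would split the region $\{(x,r):W(x,r)\cap\supp\nabla\Psi\neq\emptyset\}$ at the scale where $\Lambda^2$ and $(R_0/r)^{n+1}\mathcal N_0$ cross; on each piece the extra density is $\lesssim(r/R_0)^2\min\{\Lambda^2,(R_0/r)^{n+1}\mathcal N_0\}$ (or just $\min\{\Lambda^2,(R_0/r)^{n+1}\mathcal N_0\}$ at the largest scales), the $x$-slice at each scale has measure $\lesssim R_0^n$, and a direct computation of the resulting one-variable integrals produces the bound $\min\{1,\mathcal N_0^{4/(n+3)}\}$; this is \eqref{localdkplem1.eq}. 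For \eqref{localdkplem2.eq} one only sees scales $r<r_0<R_0$: boxes inside $\{\Psi=1\}$ contribute $\sup_{\Delta\subset\Delta(x_0,R_0),\,r(\Delta)\le r_0}\|\alpha_2(x,r,A)^2\tfrac{dx\,dr}{r}\|_{\mathcal C(\Delta)}$, and boxes meeting the transition contribute $\lesssim(r/R_0)^2\Lambda^2\le(r_0/R_0)^2$, exactly the asserted right-hand side. The main obstacle is the transition estimate itself: controlling $\fiint_{W}|A-A_0|^2$ with the correct power of $\mathcal N_0$ for the Whitney boxes straddling the cutoff region, and checking that the large straddling boxes give a convergent integral — carried out with $A_0$ a single constant, which is precisely where the telescoping mechanism behind \cite[Remark 4.22]{DLM} (Lemma \ref{gammabdalpha.lem}) is essential.
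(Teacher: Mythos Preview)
Your approach is essentially the paper's: interpolate $A$ with a single constant matrix $A_0$ across a cutoff and run a Whitney--box case analysis. The paper uses a cutoff $f(|y|)$ depending only on the horizontal variable (together with the indicator of a cylinder $\Gamma(0,R)$), whereas you use a radial $\Psi$ in $\ree$ centered at the boundary point $x_0$. Both choices work, but note that with your radial $\Psi$ the annulus $\supp\nabla\Psi$ meets $\rn$, so your assertion that Whitney boxes meeting $\supp\nabla\Psi$ ``lie at distance $\sim R_0$ from the boundary'' is false: $W(x,r)$ with $|x-x_0|\approx R_0/32$ and $r$ arbitrarily small meets $\supp\nabla\Psi$. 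This is harmless for your argument, since the only property you actually use is $\osc_{W(x,r)}\Psi\lesssim r/R_0$, and that holds for all such boxes.

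One genuine arithmetical point: the crude crossover you describe --- balancing the transition density $(r/R_0)^2\Lambda^2$ against $(r/R_0)^2(R_0/r)^{n+1}\mathcal N_0=(R_0/r)^{n-1}\mathcal N_0$, with $x$--slice of measure $\lesssim R_0^n$ --- integrates to $\mathcal N_0^{2/(n+1)}$, not $\mathcal N_0^{4/(n+3)}$; since $2/(n+1)<4/(n+3)$ for $n\ge2$, this is \emph{too weak} to yield \eqref{localdkplem1.eq}. What actually works is the telescoping you flag at the end: Lemma~\ref{gammabdalpha.lem} gives $\gamma(x,s)^2\lesssim\mathcal N_0$ for all $(x,s)\in T(x_0,R_0/3)$, and a dyadic chain from scale $r$ to scale $\sim R_0$ (stay inside $T(x_0,R_0/3)$; link the top to $A_0$ via one more comparison) yields $|A_W-A_0|\lesssim(1+\log(R_0/r))\sqrt{\mathcal N_0}$. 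The transition density is then $\lesssim\alpha_2(x,r,A)^2+(r/R_0)^2(1+\log(R_0/r))^2\,\mathcal N_0$, whose Carleson norm is $\lesssim\mathcal N_0$ --- stronger than the stated $\mathcal N_0^{4/(n+3)}$. So your plan goes through, but lean on the telescoping rather than the crude crossover for the small--constant estimate.
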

\begin{proof}
By translation we may assume $x_0 = 0$ and we set \[C_A: = \left\| \alpha_2(x,r,A)^2 \, \frac{dx \, dr}{r}\right\|_{\mathcal{C}(\Delta(x_0,R_0))}.\] We choose $c$ an absolute constant so that $\Gamma(0, 10^{10}cR_0) \subset T(x,R_0)$. Set $R := 2cR_0$ and let $A_0$ be the constant coefficient matrix so that
\[\gamma(0,50R,A) = \left(\fiint_{T(0,50R)} |A(y,s) - A_0|^2 \, dy \, ds\right)^{1/2}. \]
Note that the point-wise inequality on $\gamma(x,r)$ in Lemma \ref{gammabdalpha.lem} gives
\begin{equation}\label{gammaptws.eq}
\gamma(0,50R,A) \lesssim C_A.
\end{equation}

Now we set
\[\widetilde{A}(y,s) = \mathbbm{1}_{\Gamma(0,R)}(y,s) \left[(1- f(|y|))A(y,s) + f(|y|)A_0 \right] +\mathbbm{1}_{\left(\Gamma(0,R)\right)^c}(y,s)  
A_0,\]
where $f:[0,\infty) \to [0,1]$ is the piece-wise defined function
\begin{equation}\label{def:f}
	f(a) := \begin{cases}
0 & \text{ if } a \in [0,R/2] \\
(2/R)(a - R/2) & \text{ if } a \in (R/2,R] \\
1 & \text{ if } a > R.
\end{cases}
\end{equation} 
Note that $f$ is ($2/R$)-Lipschitz. By inspection we see that $\widetilde{A} = A$ in $\Gamma(0,cR_0) = \Gamma(0, R/2)$, so we only need to verify the estimates on the Carleson norms. 

To do this, we let $(x,r) \in \ree_+$ and estimate $\alpha_2(x,r, \widetilde{A})$. We break our analysis up into cases and combine them later. \\
{\bf Case 0:} $W(x,r)$ does not meet $\Gamma(0,R)$. In this case $A(y,s) = A_0$ a constant, $\Lambda$-elliptic matrix in $W(x,r)$. Thus, $\alpha_2(x,r, \widetilde{A}) = 0$.
\\ {\bf Case 1:} $W(x,r)$ is contained in $\Gamma(0,R)$. In this case $r \le R$ and we let $A_{x,r}$ be a constant, $\Lambda$-elliptic matrix such that
\[\alpha(x,r, A) =   \left(\fiint_{W(x,r)} |A(y,s) - A_{x,r}|^2 \, dy \, ds\right)^{1/2}.\]
Now set 
\[\widetilde{A}_{x.r} : = [1 - f(|x|)]A_{x,r} -f(|x|) A_0,\]
a constant, $\Lambda$-elliptic matrix
We make the estimate
\begin{align*}
|\widetilde{A}(y,s) - \widetilde{A}_{x,r}| & \le |[1 - f(|y|)]A(y,s) - [1 - f(|x|)]A_{x,r}| 
\\ & \quad + |[f(|x|) - f(|y|)]A_0|
\\ & \le |[1 - f(|x|)](A(y,s) - A_0)| + |f(|x|) - f(|y|)|(|A_0| + |A_{x,r}|)
\\ & \le |A(y,s) - A_0| + \frac{8r}{R} \Lambda,
\end{align*}
where we used that $f(|x|) \in [0,1]$, $f$ is ($2/R$)-Lipschitz, $|x - y| < 2r$ and $A_0$ and $A_{x,r}$ are $\Lambda$-elliptic. By using $\widetilde{A}_{x.r}$ in the definition of $\alpha_2(x,r, \widetilde{A})$ we obtain
\[\alpha_2(x,r, \widetilde{A}) \le \alpha_2(x,r, A) +  \frac{8r}{R} \Lambda.\]
\\
{\bf Case 2:} $W(x,r)$ meets both $\Gamma(0,R)$ and $\Gamma(0,R)^c$. In this case we take $\widetilde{A}_{x,r} = A_0$ and we estimate
\begin{equation}\label{case2rough.eq}
|\widetilde{A}(y,s) -\widetilde{A}_{x,r}| = \mathbbm{1}_{\Gamma(0,R)}(y,s)|[1 - f(|y|)]A(y,s) - [1 - f(|y|)]A_0|
\end{equation}
We break into further cases, setting $M = \max\{100, C_A^{-2/(n+3)}\}$.
\\
{\bf Case 2a:} $W(x,r)$ meets both $\Gamma(0,R)$ and $\Gamma(0,R)^c$ and $r > R/M$. If $M = 100$ we use that $|1 - f(|y|)| \le 1$ and that $A$ and $A_0$ are $\Lambda$-elliptic to deduce from \eqref{case2rough.eq} that 
\[|\widetilde{A}(y,s) -\widetilde{A}_{x,r}| \le 2\Lambda \le 200\Lambda(r/R)\]
Otherwise, $M = C_A^{-2/(n+3)}$ and we deduce from \eqref{gammaptws.eq}
\begin{align*}
\alpha(x,r, \widetilde{A}) =  |W(x,r)|^{-1/2} \left( \iint_{W(x,r) \cap \Gamma(0,R)} |A(y,s) - A_0|^2 \right)^{1/2} \lesssim (R/r)^{(n+1)/2}(C_A)^{1/2}. 
\end{align*}
Here we used that $W(x,r) \subset T(0,50R)$ since $W(x,r)$ meets both $\Gamma(0,R)$ implies $r/2 < R$.
\\ {\bf Case 2b:} $W(x,r)$ meets both $\Gamma(0,R)$ and $\Gamma(0,R)^c$ and $r \le R/M$. In this case, $W(x,r)$ must meet the `side' of $\partial \Gamma(0,R)$ since $W(x,r) \cap \{(y,R): y \in \rn\} = \emptyset$. Then there exists $y_0 \in W(x,r)$ with $|y_0| = R$ and hence for $(y,s) \in W(x,r)$ we have
\[|1 - f(|y|)| \le |1 - f(|y_0|)| + |f(|y_0|)  - f(|y|)| \le 0 + 6r/R,\]
where we used that $f$ is ($2/R$)-Lipschitz. Thus, using \eqref{case2rough.eq} we find
\[|\widetilde{A}(y,s) -\widetilde{A}_{x,r}| \le \frac{12r}{R} \Lambda,\]
where we used $A$ and $A_0$ are $\Lambda$-elliptic.
\\ 
Combining the the cases we have (by choice of $c$)
\[\alpha_2(x,r, \widetilde{A}) \le C\mathbbm{1}_{T(0,10^{-3}R_0)}(x,r) \left[\alpha(x,r, A) + h(r,R, C_A)\right],\]
where $C$ depends on dimension and ellipticity and the function $h(r,R,C_A)$ is given by 
\[ h(r,R,C_A) = \begin{cases}
 (R/r)^{(n+1)/2}  (C_A)^{1/2} & \text{ if } r > R/M \\ 
  (r/R) & \text{ if } r \le R/M
\end{cases}
\] 
if $M  = (C_A)^{-2/(n+3)}$  and $h(r,R,C_A) = 200\Lambda(r/R)$ if $M = 100$. (Recall $M = M(C_A) = \max\{100, (C_A)^{-2/(n+3)}\}$.)
 The Carleson measure bounds follow from this estimate and this proves the Lemma.
\end{proof}

Now that we have a matrix satisfying the (global) weak DKP condition, we can apply the results of Section \ref{mtproof.sect} to $\widetilde{A}$; however, we want to say things about the elliptic measure/kernel for $L = -\div A\nabla$, not $\widetilde{L} = -\div \widetilde{A} \nabla$. The following lemma allows us to pass estimates on $k_{\widetilde{L}}$ to $k_{L}$, albeit in a rough manner. \footnote{In some applications, for example as we will discuss in the Section \ref{dppsect.sect}, we need a sharper, more quantitative estimate of $d\omega_1^{X_0}/d\omega_2^{X_0}$. In that case we appeal instead to \cite[Lemma 5.1 and Remark 5.11]{BTZ}.}

\begin{lemma}[{\cite[Lemma 2.22]{BTZ}}]\label{roughcompdifop.lem}
Suppose that $L_i = -\div A_i \nabla$, $i = 1,2$ are two divergence form $\Lambda$-elliptic operators with $A_1 = A_2$ on $T(x,10r)$. Let $\hm_i^{X_0}$ is the $L_i$-elliptic measure with pole at $X_0\in  T(x,8r) \setminus T(x,4r)$.
Then $\hm_1^{X_0}|_{\Delta(x,r)}$ and $\hm_2^{X_0}|_{\Delta(x,r)}$ are mutually absolutely continuous. In particular, if $\hm_1^{X_0}|_{\Delta(x,r)}$ and $\mathcal{L}^n|_{\Delta(x,r)} $ are mutually absolutely continuous then so are $\hm_2^{X_0}|_{\Delta(x,r)}$ and $\mathcal{L}^n|_{\Delta(x,r)} $. 
Moreover,
\[\frac{d\hm_1^{X_0}}{d\hm_2^{X_0}}(y) \approx 1, \quad \hm_2^{X_0}-a.e.\ y \in \Delta(x,r/2),\]
where the implicit constants depend on $n$, $\Lambda$.
\end{lemma}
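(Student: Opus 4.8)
The plan is to reduce the comparison of the two elliptic measures near $\Delta(x,r)$ to a comparison of the corresponding Green functions inside the region $T(x,10r)$ where the coefficients coincide, and then to invoke the CFMS estimate (Lemma \ref{CFMS.lem}) and the comparison principle (Lemma \ref{comparisonprinc.lem}). Write $X_0=(x_0,t_0)$; since $X_0\in T(x,8r)\setminus T(x,4r)$ one has $\dist(X_0,\overline{T(x,2r)})\ge 2r$, so (by Remark \ref{AvsAt.rmk} and $A_1=A_2$ on $T(x,10r)$) both $G_{L_1}(X_0,\cdot)$ and $G_{L_2}(X_0,\cdot)$ are non-negative solutions of the \emph{same} operator $L_2^T$ in $T(x,2r)$, vanishing continuously on $\Delta(x,2r)$. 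Lemma \ref{comparisonprinc.lem} then gives
\[
\frac{G_{L_1}(X_0,Z)}{G_{L_2}(X_0,Z)}\approx\theta:=\frac{G_{L_1}(X_0,(x,r))}{G_{L_2}(X_0,(x,r))},\qquad Z\in T(x,r),
\]
with constants depending only on $n$ and $\Lambda$. Combining this with the CFMS estimate $\hm_i^{X_0}(\Delta(y,s))\approx s^{\,n-1}G_{L_i}(X_0,(y,s))$ (valid for small $s$ and $y\in\Delta(x,r/2)$ since $X_0$ is far away), the local doubling of $\hm_2^{X_0}$ (Lemma \ref{hmdoub.lem}), and the Lebesgue differentiation theorem, I obtain that $\hm_1^{X_0}|_{\Delta(x,r/2)}$ and $\hm_2^{X_0}|_{\Delta(x,r/2)}$ are mutually absolutely continuous with $\tfrac{d\hm_1^{X_0}}{d\hm_2^{X_0}}(y)\approx\theta$ for $\hm_2^{X_0}$-a.e.\ $y\in\Delta(x,r/2)$.

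The crux is then to show $\theta\approx 1$ with constants depending only on $n$ and $\Lambda$. For this I would compare, \emph{in the other variable}, the functions $Y\mapsto G_{L_i}(Y,(x,r))$, $i=1,2$: these are non-negative solutions of $L_i(\,\cdot\,)=0$ away from the pole $(x,r)$, vanish on $\rn$, and — again since $A_1=A_2$ on $T(x,10r)$ — solve the \emph{same} equation in $\mathcal{U}:=T(x,9r)\setminus\overline{B((x,r),r/100)}$. The point $X_0$ lies in $\mathcal{U}$ at a definite distance from its two spherical boundary pieces, so the comparison principle and Harnack's inequality (passing through a bounded Harnack chain that ``goes over'' the pole, and through the boundary Harnack principle, cf.\ \cite{CFMS}, when $t_0\ll r$, i.e.\ when $X_0$ is close to the flat part $\Delta(x,9r)$ on which both functions vanish) give $\theta\approx G_{L_1}((x,3r),(x,r))\,/\,G_{L_2}((x,3r),(x,r))$. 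Since $(x,3r)$ and $(x,r)$ lie at distance $\ge r$ from $\rn$ and at mutual distance $2r$, the standard interior Green function bounds (the fundamental-solution upper bound together with the near-pole lower bound and a bounded Harnack chain) give $G_{L_i}((x,3r),(x,r))\approx r^{\,1-n}$ with constants depending only on $n$ and $\Lambda$ and \emph{not} on $i$. Hence $\theta\approx 1$, which proves the asserted density bound on $\Delta(x,r/2)$.

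It remains to upgrade mutual absolute continuity from $\Delta(x,r/2)$ to all of $\Delta(x,r)$ (where no quantitative bound is claimed). I would cover $\Delta(x,r)$ by countably many surface balls $\Delta(y,\rho)$ with $\Delta(y,10\rho)\subset\Delta(x,10r)$ and $\rho$ small, use the kernel-function/change-of-pole estimates (Lemma \ref{Kernelcomparehc.lem}) to pass, for each operator, from the pole $X_0$ to the corkscrew point $(y,5\rho)$, and then apply the density bound just proved on $\Delta(y,\rho/4)$ with pole $(y,5\rho)$; this produces, on each $\Delta(y,\rho/4)$, a finite and strictly positive mutual density of $\hm_1^{X_0}$ and $\hm_2^{X_0}$, hence mutual absolute continuity on $\Delta(x,r)$, and the remaining ``in particular'' statement about $\mathcal{L}^n$ is then immediate. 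The step I expect to be the main obstacle is obtaining \emph{uniform} constants throughout this chain of comparisons when the pole $X_0$ is very close to $\rn$; this is precisely why the argument is routed through the interior point $(x,3r)$ rather than estimating $G_{L_i}(X_0,(x,r))$ directly, and why the boundary Harnack principle is invoked. The statement is in fact \cite[Lemma~2.22]{BTZ}, and the above is a sketch of its proof.
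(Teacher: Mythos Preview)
The paper does not prove this lemma; it is simply quoted from \cite[Lemma~2.22]{BTZ} without argument. Your sketch is the standard route and is sound: the comparison principle applied to $G_{L_1}(X_0,\cdot)$ and $G_{L_2}(X_0,\cdot)$ in $T(x,2r)$ (where both solve the same adjoint equation) together with CFMS reduces the problem to showing $\theta\approx 1$, and your second comparison in the first variable, combined with the universal interior bound $G_{L_i}((x,3r),(x,r))\approx r^{1-n}$, does exactly that. The only delicate point is the one you flag, namely transporting the ratio from $X_0$ to the interior reference point $(x,3r)$ when $t_0$ is small; routing through $T(x_0,2r')\subset T(x,10r)$ (which does not contain the pole $(x,r)$ since $|x_0-x|>3r$ in that case) and invoking the boundary Harnack principle there, followed by an interior Harnack chain over the pole, is the correct fix. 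Your covering argument for the qualitative mutual absolute continuity on all of $\Delta(x,r)$ is also fine.
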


In what follows, for $\Delta = \Delta(x,r)$ we set $X_{\Delta} = (x, 12r)$. We give an alternative proof to the following quantitative result from \cite{KP}.
\begin{theorem}\label{KPRThrm.eq}
Suppose that $A$ satisfies the weak DKP condition on $T(x_0,R_0)$ for some $x_0 \in \rn$ and $R_0  > 0$, that is,
\[\left\| \alpha_2(x,r,A)^2 \, \frac{dx \, dr}{r}\right\|_{\mathcal{C}(\Delta(x_0,R_0))} =: C_A < \infty.\] 
Then there is a absolute constant $c'$ such that for every $\Delta$ such that $\Delta \subset \Delta(x_0, c'R_0)$ it holds 
\begin{equation}\label{KPRHP.eq}
\left(\fint_{\Delta} (k^{X_\Delta}_L(z))^p \,dz \right)^{1/p} \le C \fint_{\Delta} k^{X_\Delta}_L(z) \, dz,
\end{equation}
where the constants $C, p > 1$ depend on $n$, $\Lambda$ and $C_A$.
\end{theorem}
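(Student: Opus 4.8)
The plan is to reduce to the global results of Section \ref{mtproof.sect} by means of the extension provided by Lemma \ref{localizeDKP.lem}, and then to transfer the reverse H\"older inequality from the extended operator back to $L$ using the comparison principle and the kernel function estimates.

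First I would apply Lemma \ref{localizeDKP.lem} to produce a $\Lambda$-elliptic matrix $\widetilde{A}$ which agrees with $A$ on $\Gamma(x_0,cR_0)$, satisfies the global weak DKP condition, and has $\left\|\alpha_2(\cdot,\cdot,\widetilde{A})^2\,\tfrac{dx\,dr}{r}\right\|_{\mathcal{C}}$ bounded by a quantity $N = N(n,\Lambda,C_A)$. Writing $\widetilde{L} = -\div\widetilde{A}\nabla$, Theorem \ref{KPalt.thrm} applied to $\widetilde{L}$ shows that $\hm^\infty_{\widetilde{L}} \in A_\infty(dx)$ with constant controlled by $n,\Lambda,N$; hence its density $k^\infty_{\widetilde{L}}$ satisfies the reverse H\"older inequality \eqref{rhpdef.eq} for some $p = p(n,\Lambda,C_A) > 1$ with constant $C'(n,\Lambda,C_A)$. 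Note that it is Theorem \ref{KPalt.thrm}, not Theorem \ref{mainvmopt2.thrm}, that is available here, since $\widetilde{A}$ satisfies only a non-vanishing weak DKP condition.

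Next I would change the pole from infinity to $X_\Delta = (x,12r)$ for a fixed $\Delta = \Delta(x,r)\subset\Delta(x_0,c'R_0)$, with $c'$ an absolute constant chosen below. By Lemma \ref{infkernfnlem.lem} applied to $\widetilde{L}$ one has $k^{X_\Delta}_{\widetilde{L}}(z) = H_\infty(X_\Delta,z)\,k^\infty_{\widetilde{L}}(z)$ for a.e.\ $z$, and by \eqref{compinfkern.eq} (say with $\kappa = 2$) the factor $H_\infty(X_\Delta,\cdot)$ is comparable to a constant on $\Delta(x,120r)\supset\Delta$ with ratio depending only on $n,\Lambda$. Since multiplying a weight by a factor of bounded oscillation on $\Delta$ preserves \eqref{rhpdef.eq} up to a multiplicative constant, $k^{X_\Delta}_{\widetilde{L}}$ satisfies \eqref{KPRHP.eq} on $\Delta$ with $p$ and the constant depending only on $n,\Lambda,C_A$; in particular it is positive a.e.\ on $\Delta$, so $\hm^{X_\Delta}_{\widetilde{L}}|_\Delta$ and $\mathcal{L}^n|_\Delta$ are mutually absolutely continuous. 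Finally I would transfer to $L$ via Lemma \ref{roughcompdifop.lem}: the inclusion $\Delta(x,r)\subset\Delta(x_0,c'R_0)$ forces $|x-x_0|+r\le c'R_0$, whence $T(x,20r)\subset\Gamma(x_0,cR_0)$ provided $c'\le c/21$, so $\widetilde{A} = A$ on $T(x,20r)$. Applying Lemma \ref{roughcompdifop.lem} with base point $x$ and scale $\rho := 2r$ — for which the pole $X_\Delta = (x,12r)$ lies in $T(x,8\rho)\setminus T(x,4\rho) = T(x,16r)\setminus T(x,8r)$ while $T(x,10\rho) = T(x,20r)$ is where the coefficients agree — yields $k^{X_\Delta}_L\approx k^{X_\Delta}_{\widetilde{L}}$ on $\Delta(x,\rho/2) = \Delta$ with constants depending only on $n,\Lambda$. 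Combining with the estimate for $k^{X_\Delta}_{\widetilde L}$ gives \eqref{KPRHP.eq} for $k^{X_\Delta}_L$, with $C$ and $p$ depending only on $n$, $\Lambda$, $C_A$.

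The hard part will be bookkeeping rather than any deep difficulty: one must choose $c'$ so that $A$ and $\widetilde{A}$ agree on a Carleson region comfortably larger than $T_\Delta$ and still containing $X_\Delta$, and one must run Lemma \ref{roughcompdifop.lem} at the dilated scale $2r$ precisely because $X_\Delta$ sits at height $12r$, hence outside the annulus $T(x,8r)\setminus T(x,4r)$ to which that lemma applies directly. Everything else is an assembly of Lemma \ref{localizeDKP.lem}, Theorem \ref{KPalt.thrm}, Lemma \ref{infkernfnlem.lem} and Lemma \ref{roughcompdifop.lem}.
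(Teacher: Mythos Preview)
Your proposal is correct and follows essentially the same route as the paper: extend $A$ to a global weak-DKP matrix $\widetilde{A}$ via Lemma \ref{localizeDKP.lem}, invoke Theorem \ref{KPalt.thrm} to get $k^\infty_{\widetilde{L}}\in RH_p$, move the pole using the comparability \eqref{compinfkern.eq} from Lemma \ref{infkernfnlem.lem}, and then switch operators with Lemma \ref{roughcompdifop.lem}. Your bookkeeping on the dilated scale $\rho=2r$ (so that $X_\Delta=(x,12r)$ lands in the right annulus for Lemma \ref{roughcompdifop.lem}) is actually more explicit than the paper, which simply chooses $c'$ so that $T(x,50r)\subset\Gamma(0,cR_0)$ and leaves the verification to the reader.
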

The estimate \eqref{KPRHP.eq} is often what is referred to as the `reverse Holder inequality for the elliptic kernel' and is equivalent to a local $A_\infty$-type condition for the elliptic measure. If \eqref{KPRHP.eq} holds for all $\Delta$, we have that the $L^{p'}$ Dirichlet problem is solvable on $\ree_+$ (see e.g. \cite[Proposition 4.5]{HofPLe}). The flexibility of our `globalization' lemma (Lemma \ref{localizeDKP.lem}) and a `standard' pullback mechanism\footnote{This pullback is called sometimes referred to as the Dahlberg-Kenig-Stein pullback, see \cite{KP, Dahlpullback}.} allows one extend the above theorem to the setting of Lipschitz domains as was done by Kenig and Pipher \cite{KP}. 

\begin{proof}[Proof of Theorem \ref{KPRThrm.eq}]\label{KPreprovefull.thrm}
By translation we may assume $x_0 = 0$. Let $A$ be as in the statement of the theorem. Let $\widetilde{A}$ be the operator produced by Lemma \ref{localizeDKP.lem} and $\widetilde{L} = -\div \widetilde{A} \nabla$. Then $\widetilde{A}$ satisfies the {\it global} weak DKP condition with
\[\left\| \alpha_2(x,r, \widetilde{A})^2 \, \frac{dx \, dr}{r} \right\|_{\mathcal{C}} \lesssim C_A + 1.\]
By Theorem \ref{KPalt.thrm},  $\hm^\infty_{\widetilde{L}} \in A_\infty$, where $\hm^\infty_{\widetilde{L}}$ is the elliptic measure for $\mathcal{L}$ with pole at infinity. Moreover, the constants in the $A_\infty$ condition are controlled by $C_A$, $n$ and ellipticity. It follows from basic properties of $A_\infty$ weights (see Definition \ref{Ainfty.def}) that 
the kernel $k^\infty_{\widetilde{L}} = \frac{d\hm^\infty_{\widetilde{L}}}{dx}$ satisfies a reverse H\"older condition, that is,
\[\left(\fint_{\Delta(x,r)} (k^\infty_{\widetilde{L}}(z))^p \, dz \right)^{1/p} \le \widetilde{C} \fint_{\Delta(x,r)} k^\infty_{\widetilde{L}}(z) \, dz, \quad \forall x \in \rn, r > 0,\]
where $\widetilde{C}, p > 1$ depend only on $n$, ellipticity and $C_A$. 

Next, we move the pole to a finite one. Using Lemma \ref{infkernfnlem.lem} we obtain\footnote{This time we do not need to be as precise as we were in Theorem \ref{mainvmopt2.thrm}. Instead, we only need the bound \eqref{compinfkern.eq}, and we do not use \eqref{HCinfkern.eq}.}
\begin{equation}\label{movedtofinitedifop.eq}
\left(\fint_{\Delta(x,r)} (k^{X_\Delta}_{\widetilde{L}}(z))^p \, dz \right)^{1/p} \le C' \fint_{\Delta(x,r)} k^{X_\Delta}_{\widetilde{L}}(z) \, dz,
\end{equation}
where $p >1$ is as above and $C'$ depends only on $n$, $\Lambda$ and $C_A$. To conclude, we need to change the operator $\widetilde{L}$ to $L$. Recall that in Lemma \ref{localizeDKP.lem} $\widetilde{A} = A$ on $\Gamma(0, cR_0)$. Now we choose $c'$ so small to ensure that $\Delta(x,r) \subset \Delta(0, c' R_0)$ implies $T(x,50r) \subset \Gamma(0, cR_0)$. This allows us to apply Lemma \ref{roughcompdifop.lem} to any such $\Delta(x,r)$, so that we obtain
\[k^{X_\Delta}_{\widetilde{L}}(z) \approx k^{X_\Delta}_{L}(z), \quad \forall \Delta \subset \Delta(x_0, c'R), z \in \Delta,\]
with constants depending on $n$ and ellipticity.
Using this estimate in conjunction with \eqref{movedtofinitedifop.eq} yields \eqref{KPRHP.eq} and proves the theorem.
\end{proof}


\subsection{Remarks concerning the work of Dindos, Petermichl and Pipher}\label{dppsect.sect}
In a few places we have made some remarks regarding how our work can be used to complement the work of \cite{DPP}. This subsection clarifies these remarks. In \cite{DPP} the authors show that for any fixed $p > 1$ the conclusion of Theorem \ref{KPreprovefull.thrm}, that is, \eqref{KPRHP.eq} holds {\it provided} $C_A$ therein is sufficiently small. We do not attempt to reprove this here. One reason is that we use \cite{FKP}, which gives {\it rough} bounds for the $A_\infty$ constant of a doubling weight in terms of the Carleson norm of the measure $\mu$ and its doubling constant as in Theorem \ref{FKPthrm.thrm}. In particular, the argument in \cite{FKP} as is written does not provide the estimates we ask for below. It should be noted that the work of Korey does {\it not} explicitly treat the small constant case, but rather the vanishing constant case, where the measure $\mu$ in Theorem \ref{FKPthrm.thrm} is a Carleson measure with vanishing trace (recall the definition in \eqref{def:vCarl}).

After this article was completed, the first author, Egert and Saari \cite{BES} showed the following. Suppose $w$ is a weight that is doubling with constant $C_{doub}$. For $\epsilon > 0$, there exists $ \delta = \delta(n, C_{doub},\epsilon) > 0$ such that if the measure $\mu$ in Theorem \ref{FKPthrm.thrm} with $\omega = w\, dx$ satisfies $\|\mu\|_{\mathcal{C}} \leq \delta$ then $[w]_{A_\infty} \leq 1 + \epsilon$.

This allows one to show Theorem \ref{KPRThrm.eq} for specific $p > 1$, {\it provided} the Carleson norm in the weak DKP condition is sufficiently small. In particular, this might give an alternative proof of the results in \cite{DPP}. Some indication of how to proceed to treat more general geometric settings, as was done in \cite{DPP}, is contained in \cite[Section 5]{BES}.

We did not investigate whether one can control `local $A_\infty$ constants' with a localized estimate on the Carleson norm of $\mu$, that is, $\|\mu\|_{\mathcal{C}(\Delta_0)}< \delta$ implies \eqref{A_inftydef.eq} with $C = (1 + \epsilon)$ on all balls $\Delta(x,r)$ sufficiently small and well contained in $\Delta_0$. This would allow one to use Theorem \ref{mainquantest.thrm} more directly to prove the Theorems that follow it. In particular, it would (conveniently) eliminate the need for using the pole change arguments to treat the finite pole case.

\appendix
\renewcommand{\theequation}{A.\arabic{equation}}
\section*{Appendix. An extension of Theorem \ref{mainvmo.thrm}}
\begin{theorem}\label{thm:squareDini}
	Let $\Omega$ be a $C^1$-square Dini domain in $\RR^{n+1}$. Let $A(\cdot)$ be an elliptic matrix in $\Omega$ which satisfies the weak DKP condition with vanishing trace. Then for any $X_0 \in \Omega$, the elliptic measure $\omega_\Omega^{X_0}$ is absolute continuous with respect to the boundary surface measure $ \sigma := \mathcal{H}^n|_{\pO}$, and moreover, the Poisson kernel $k_\Omega := \frac{d\omega_\Omega^{X_0}}{d\sigma}$ satisfies $\log k_\Omega  \in VMO_{loc}(\partial\Omega)$.
\end{theorem}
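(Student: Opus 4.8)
The plan is to reduce the statement to the half-space results of Section~\ref{mtproof.sect} via the standard graph pullback; the point is that the square-Dini hypothesis \eqref{def:squareDini} is precisely what makes the weak DKP condition with vanishing trace survive the change of variables. Since $\Omega$ is $C^1$-square Dini and the conclusion is local, fix $q\in\pO$; after a rigid motion $\Omega\cap B_q=\{(x,t)\in\ree : t>\varphi(x)\}\cap B_q$ for a ball $B_q$ centered at $q$ and a $C^1$ function $\varphi$ satisfying \eqref{eq:moc}. Let $\rho=\rho_q\colon\ree_+\to\Omega$, $\rho(x,t):=(x,\,t+\varphi(x))$. Since $\nabla\varphi\in L^\infty$, $\rho$ is a bi-Lipschitz homeomorphism onto its image, $D\rho$ depends only on $\nabla\varphi(x)$, and $\det D\rho\equiv1$. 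If $u$ solves $Lu=0$ in $\Omega$, then $v:=u\circ\rho$ solves $\widetilde L v:=-\div\widetilde A\nabla v=0$ with $\widetilde A(Y)=(D\rho(Y))^{-1}A(\rho(Y))(D\rho(Y))^{-T}$, which is $\Lambda'$-elliptic for $\Lambda'=\Lambda'(\Lambda,\|\nabla\varphi\|_\infty)$.

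The key step is that $\widetilde A$ satisfies the vanishing weak DKP condition on a Carleson region of $\ree_+$. For a Whitney region $W(x,r)\subset\ree_+$, note that $\rho(W(x,r))$ is comparable to a Whitney region of $\Omega$; choosing $\widetilde A_0:=(D\rho(x,0))^{-1}A_0(D\rho(x,0))^{-T}$ with $A_0$ near-optimal for the oscillation of $A$ over $\rho(W(x,r))$ and telescoping $\widetilde A-\widetilde A_0$ in its two factors — using $|D\rho(y,s)-D\rho(x,0)|=|\nabla\varphi(y)-\nabla\varphi(x)|\le\theta(Cr)$ for $(y,s)\in W(x,r)$ by \eqref{eq:moc} — one obtains the pointwise bound
\[
\alpha_2(x,r,\widetilde A)^2\;\lesssim\;\alpha_2^{\Omega}\bigl(\rho(x,0),Cr,A\bigr)^2\;+\;\theta(Cr)^2,
\]
where $\alpha_2^{\Omega}$ is computed with Whitney regions of $\Omega$ (and $\alpha_2$ is freely traded with $\gamma$ using Lemma~\ref{gammabdalpha.lem}). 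Integrating against $\tfrac{dx\,dr}{r}$ and transporting by the bi-Lipschitz map $\rho$, which carries surface balls of $\rn$ to surface balls of $\pO$ of comparable radius and hence preserves Carleson measures and vanishing Carleson measures: the first term becomes a vanishing Carleson measure because $A$ satisfies the vanishing weak DKP condition on $\Omega$, while the second does because
\[
\sup_{x_0\in\rn}\Bigl\|\theta(Cr)^2\,\tfrac{dx\,dr}{r}\Bigr\|_{\mathcal{C}(\Delta(x_0,r_0))}\;\lesssim\;\int_0^{Cr_0}\theta(s)^2\,\frac{ds}{s}\;\longrightarrow\;0\quad\text{as }r_0\to0^+
\]
by \eqref{def:squareDini}. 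By the globalization Lemma~\ref{localizeDKP.lem} — whose error terms, controlled as in \eqref{localdkplem2.eq}, do not spoil the vanishing trace — we extend $\widetilde A$ to all of $\ree_+$ so that it agrees with $\widetilde A$ on a smaller Carleson region and satisfies the global vanishing weak DKP condition. Theorem~\ref{mainvmopt2.thrm} applied to this global operator gives, for any finite pole $Y\in\ree_+$, that the corresponding elliptic measure is absolutely continuous and its kernel $k^Y$ satisfies $\log k^Y\in VMO_{loc}(\rn)$; by Lemma~\ref{roughcompdifop.lem} the same holds for $\widetilde L$ itself on small surface balls, for poles $Y$ in a definite annulus.

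To transfer back, $\rho$ restricts on the boundary to the $C^1$ bi-Lipschitz chart $x\mapsto(x,\varphi(x))$ of $\pO\cap B_q$, which pushes $J(x)\,dx$ forward to $\sigma=\HH^n|_{\pO}$ with $J(x)=\sqrt{1+|\nabla\varphi(x)|^2}$. As in the proof of Theorem~\ref{mainvmopt2.thrm}, a change of pole via the kernel function $H(X_0,X_q,\cdot)=\tfrac{d\omega^{X_0}_\Omega}{d\omega^{X_q}_\Omega}$ of Lemma~\ref{Kernelcomparehc.lem}, which is locally Hölder continuous in its last argument (so $\log H$ is continuous near $q$), reduces an arbitrary $X_0\in\Omega$ to a point $X_q=\rho(Y_q)$ at definite height above $q$, with $Y_q$ in the annulus where Lemma~\ref{roughcompdifop.lem} applies. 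Uniqueness of the Dirichlet problem transported through $\rho$ gives $\omega^{X_q}_\Omega=(\rho|_{\pO})_*\,\omega^{\widetilde L,Y_q}_{\ree_+}$, whence $\omega^{X_q}_\Omega\ll\sigma$ and, by the area formula, $k_\Omega\circ\rho|_{\pO}=k^{Y_q}_{\widetilde L}/J$, so that
\[
\log\bigl(k_\Omega\circ\rho|_{\pO}\bigr)\;=\;\log k^{Y_q}_{\widetilde L}\;-\;\tfrac12\log\bigl(1+|\nabla\varphi|^2\bigr).
\]
The first term lies in $VMO_{loc}(\rn)$ by the previous paragraph; the second lies in $VMO_{loc}(\rn)$ because $\nabla\varphi$ is uniformly continuous with modulus $\theta$, hence $\tfrac12\log(1+|\nabla\varphi|^2)$ is bounded and uniformly continuous and so in $VMO$. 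Since composition with the $C^1$ bi-Lipschitz chart $\rho|_{\pO}$ preserves $VMO_{loc}$, and $\log H(X_0,X_q,\cdot)$ is continuous near $q$, we conclude $\log k_\Omega\in VMO_{loc}$ near $q$ and $\omega^{X_0}_\Omega\ll\sigma$; as $q\in\pO$ was arbitrary, $\log k_\Omega\in VMO_{loc}(\pO)$.

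The main obstacle is the key step: everything past the pullback is a routine transport of a half-space statement, but one must verify that the distortion of Whitney and Carleson regions under $\rho$ is harmless and, above all, that the error measure $\theta(Cr)^2\,\tfrac{dx\,dr}{r}$ produced by the change of variables has vanishing trace — which is exactly where, and why, the square-Dini integrability \eqref{def:squareDini} is needed rather than mere $C^1$ regularity.
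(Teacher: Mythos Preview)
Your overall architecture matches the paper's: pull back by the graph map $\rho(x,t)=(x,t+\varphi(x))$, verify that the pulled-back matrix satisfies the vanishing weak DKP condition locally via the estimate $\alpha_2(x,r,\widetilde A)^2\lesssim\alpha_2^\Omega+\theta(Cr)^2$ (this is exactly the paper's \eqref{eq:compCarl}, and your identification of the square-Dini condition as the mechanism making $\theta(r)^2\,\tfrac{dx\,dr}{r}$ vanish is correct), globalize with Lemma~\ref{localizeDKP.lem}, apply the half-space theorem, and undo the change of variables using $k_\Omega\circ\rho=k/\sqrt{1+|\nabla\varphi|^2}$.

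There is, however, one genuine gap. After globalizing $\widetilde A$ to an operator $\widehat L$ and obtaining $\log k_{\widehat L}^{Y}\in VMO_{loc}(\rn)$, you invoke Lemma~\ref{roughcompdifop.lem} to pass back to $\widetilde L$. That lemma only yields $d\omega_{\widetilde L}^{Y}/d\omega_{\widehat L}^{Y}\approx 1$ pointwise, i.e.\ $\log k_{\widetilde L}^{Y}-\log k_{\widehat L}^{Y}\in L^\infty$. But $L^\infty\not\subset VMO$ (think of an indicator of a half-space), so this does \emph{not} imply $\log k_{\widetilde L}^{Y}\in VMO_{loc}$. You recognized the analogous issue for the pole change and correctly handled it via the H\"older continuity of the kernel function in Lemma~\ref{Kernelcomparehc.lem}; the operator change requires the same care. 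The paper closes this gap by appealing to \cite[Lemma~5.1 and Remark~5.11]{BTZ}, which shows that when two operators agree on $T(x,10r)$ the Radon--Nikodym derivative $d\omega_{L_1}^{X_0}/d\omega_{L_2}^{X_0}$ has \emph{small oscillation} on surface balls of radius $\ll r$ (not merely bounded ratio); see the footnote attached to Lemma~\ref{roughcompdifop.lem} and the discussion around \eqref{eq:logVMO}. With that in place your argument goes through and coincides with the paper's.
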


In general, when we say $\Omega$ is a $C^1$-square Dini domain, it means there exist $R>0$, finitely many boundary points $x_i \in \partial\Omega$ and cylindrical regions $\mathcal{C}(x_i, R)$\footnote{Modulo an orthogonal transformation $\mathcal{C}(x_i, r)$ is defined as $\{(x,t) \in \RR^n \times \RR: |x-x_i|< R, -f(R) R < t < f(R) R \}$ where $f(R) = \max\{1, 2\theta(R)\}$. The choice of $f(R)$ guarantees that the graph of $\varphi $ on the ball $\Delta(x_i, R))$ is completely contained in the cylinder.} centered at $x_i$ such that for each $i$, $\Omega \cap \mathcal{C}(x_i, R)$ is the region above the graph of a $C^1$-square Dini function $\varphi_i$. Assume without loss of generality that $X_0 \in \mathcal{C}(x_i, R) \setminus \mathcal{C}(x_i, R/2)$ for each $i$. It is not hard to see that (with the help of a cut-off function) we can extend $\varphi_i$ to a globally-defined function $\varphi: \RR^n \to \RR$, such that the modulus of continuity of $\nabla\varphi$ is also bounded above by $\theta$. Moreover, applying \cite[Lemma 5.1]{BTZ} (in particular, see \cite[Remark 5.11]{BTZ}) to $C^1$ domains, we can show that if two elliptic operators $L_1$ and $L_2$ agree in $\Omega \cap \mathcal{C}(x_i, R)$, then the ratio of their elliptic measures $d\omega_{L_2}^{X_0}/d\omega_{L_1}^{X_0}$ has small oscillation in a surface ball, whose radius is much smaller compared to $R$. In particular, it implies by a similar proof as that of Lemma \ref{mainvmopt2.thrm} that
\begin{equation}\label{eq:logVMO}
	\log k_{L_1} \in VMO(\partial\Omega \cap \mathcal{C}(x_i, R/2)) \iff \log k_{L_2} \in VMO(\partial\Omega \cap \mathcal{C}(x_i, R/2)). 
\end{equation} 
Therefore the proof of Theorem \ref{thm:squareDini} is reduced to the setting where $\Omega$ is the region above the graph of a single function $\varphi: \RR^n \to \RR$.

More precisely, let $\Omega \subset \RR^{n+1}$ be the domain above the graph of $\varphi: \RR^n \to \RR$, where the modulus of continuity for $\nabla \varphi$ satisfies the square Dini condition (see \eqref{def:squareDini}). 
%
Assume without loss of generality that $\varphi(0) = 0$ and $\nabla \varphi(0) = 0$. Let $A(x,t)$ be a uniformly elliptic coefficient matrix in $\Omega$, which satisfies the weak DKP condition with vanishing trace, which means the following. 
For any $x_0 \in \RR^n$ fixed, we define the Whitney region
\[ W_\Omega(x_0, r) := \left\{(x,t) \in \RR^n \times \RR: x\in \Delta_r(x_0), ~\varphi(x)+\frac{r}{2} < t \leq \varphi(x) + r \right\}, \]
and denote the $L^2$-oscillation of the matrix $A$ as
\begin{equation}\label{def:oscA}
	\alpha_A(x_0,r) := \inf_{A_0 \in \mathfrak{A}(\Lambda)} \left( \frac{1}{|W_\Omega(x_0,r)|} \iint_{W_\Omega(x_0,r)} |A(x,t) - A_0|^2 dxdt \right)^{1/2}, 
\end{equation} 
where, as in Definition \ref{OSCcoef.def} before, the infimum ranges over all constant coefficient matrices. We say $A$ satisfies the weak DKP condition with vanishing trace, if 
\begin{equation}\label{def:muA}
	d\mu_A(x, r) = \alpha_A(x,r)^2 \frac{ dx dr}{r} 
\end{equation} 
is a Carleson measure in $\RR^{n+1}_+$ with vanishing trace (see Definition \ref{def:Carl}).
Let $u$ be a solution to the elliptic equation
\[ -\divg(A(x,t) \nabla u) = 0 \text{ in } \Omega. \]

We consider the flattening map
\[ \Phi: (y,s) \in \RR^{n+1}_+ \mapsto (y,s+\varphi(y)) =: (x,t) \in \Omega, \]
and a function $\tilde{u}: \RR^{n+1}_+ \to \RR$ defined by $\tilde{u}(y,s) := u \circ \Phi(y,s)$. A simple computation shows that $\tilde{u}$ is the solution to the elliptic operator $-\divg(B(y,s) \nabla \tilde{u}) = 0$ in $\RR^{n+1}_+$, where the coefficient matrix $B(y,s)$ is given by
\begin{align}
	B(y,s) & = \det D\Phi  \cdot \left( D\Phi(y,s) \right)^{-1} A(\Phi(y,s)) \left( D\Phi^T(y,s) \right)^{-1} \nonumber \\
	& = \begin{pmatrix}
	\Id_{n} & 0 \\
	\left( -\nabla \varphi(y) \right)^T & 1
\end{pmatrix} A(\Phi(y,s)) \begin{pmatrix}
	\Id_{n} & -\nabla \varphi(y) \\
	0 & 1
\end{pmatrix}. \label{def:B}
\end{align} 
We may define the $L^2$-oscillation of the matrix $B$ as in \eqref{def:oscA}, except to replace the integration region by the corresponding Whitney region in $\RR^{n+1}_+$
\[ W(x_0, r) := \Delta_r(x_0) \times \left(\frac{r}{2}, r \right]. \]
Let $A_0$ be a constant coefficient matrix which achieve the infimum for $\alpha_A(x_0, r)$. In particular, $A_0$ has the same constants of ellipticity as $A(\cdot)$. We define
\[ B_0 := \begin{pmatrix}
	\Id_{d-1} & 0 \\
	\left( -\nabla \varphi(x_0) \right)^T & 1
\end{pmatrix} A_0 \begin{pmatrix}
	\Id_{d-1} & -\nabla \varphi(x_0) \\
	0 & 1
\end{pmatrix}. \]
For any $(y,s) \in W(x_0,r)$, by the formula \eqref{def:B} as well as \eqref{eq:moc} we get
\[ \left| B(y,s) - B_0 \right| \lesssim |A(\Phi(y,s)) - A_0| + |\nabla \varphi(y) - \nabla \varphi(x_0) ||A_0| \lesssim |A(\Phi(y,s)) - A_0| + \theta(r), \]
where the constant depends on $\|A(x,t)\|_\infty$. Therefore
\begin{align}
	\left| \alpha_B(x_0,r) \right|^2 & \leq \frac{1}{|W(x_0,r)|} \iint_{W(x_0,r)} |B(y,s) - B_0|^2 dy ds \nonumber \\
	& \lesssim \frac{1}{|W(x_0,r)|} \iint_{W(x_0,r)} |A(\Phi(y,s)) - A_0|^2 dy ds + \theta(r)^2 \nonumber \\
	& \lesssim \frac{1}{|W_\Omega(x_0,r)|} \iint_{W_\Omega(x_0, r)} |A(x,t) - A_0|^2 dx dt + \theta(r)^2 \nonumber \\
	& = \left| \alpha_A(x_0, r) \right|^2 + \theta(r)^2.\label{eq:compCarl}
\end{align}
In the penultimate inequality of \eqref{eq:compCarl}, we use the fact that $\Phi(W(x_0,r)) = W_\Omega(x_0, r)$.
 Similarly to \eqref{def:muA}, we define
\[ d\mu_B(x,r) := \alpha_B(x,r)^2 \frac{dx dr}{r}. \] 
Then we may compute its Carleson norm on each surface ball $\Delta \subset \partial \RR^{n+1}_+$ and
\[ \|\mu_B\|_{\mathcal{C}(\Delta)} \lesssim \|\mu_A\|_{\mathcal{C}(\Delta)} + \int_0^{r_\Delta} \theta(r)^2 \frac{dr}{r}. \]
In particular, if $\theta$ satisfies the square Dini condition, 
then
\[ \|\mu_B\|_{\mathcal{C}(\Delta)} \to 0 \text{ as } r_\Delta \to 0. \]
However $\mu_B$ may not be a Carleson measure at large scales because of the extra $\theta(r)^2$ term. To remedy that, let $R>0$ be fixed
and we use a similar construction as in Lemma \ref{localizeDKP.lem} to define a new coefficient matrix $\widetilde{B}(\cdot)$, so that $\widetilde{B} \equiv B$ in $\Gamma(0,R/2)$, and $\widetilde{B}(\cdot)$ is a constant coefficient matrix in $\RR^{n+1}_+ \setminus \Gamma(0,R)$.
To be more precise, let $\widetilde{B}_0$ be a constant matrix which achieves the minimum of $\gamma(0, 100cR)$ for the matrix $B(\cdot)$. We define
\[ \widetilde{B}(y,s) = \mathbbm{1}_{\Gamma(0,R)}(y,s) \left[(1- f(|y|))B(y,s) + f(|y|)\widetilde{B}_0 \right] +\mathbbm{1}_{\left(\Gamma(0,R)\right)^c}(y,s) \widetilde{B}_0,  \]
where $f$ is a piece-wise linear function defined as in \eqref{def:f}.
Then Lemma \ref{localizeDKP.lem} implies $\widetilde{B}(\cdot)$ is indeed a Carleson measure with vanishing trace in $\RR^{n+1}_+$.

Let $\omega_\Omega^{X_0}$ denote the elliptic measures corresponding to the matrix $A$ in $\Omega$. Let $\omega^{Y_0}$ and $ \widetilde{\omega}^{Y_0}$ denote the elliptic measure corresponding to the matrix $B$ and $ \widetilde{B}$, respectively, in $\RR^{n+1}_+$. Theorem \ref{mainvmo.thrm} gives that $\widetilde{\omega}^{Y_0} \ll \mathcal{L}^n = dx$ and the Poisson kernel $\widetilde{k}(x) := \frac{d\widetilde{\omega}^{Y_0}}{dx}(x)$ satisfies $\log \widetilde{k} \in VMO_{loc}(\RR^n)$. Similar to the discussions before \eqref{eq:logVMO}, this implies that $\omega^{Y_0} \ll \mathcal{L}^n$ in $B(0, R/2)$, and moreover the Poisson kernel $k(x) = \frac{d\omega^{Y_0}}{dx}(x)$ satisfies $\log k \in VMO(\RR^n \cap B(0, R/2))$.

A simple change of variable shows that
\begin{equation}\label{eq:covhm}
	\omega_\Omega^{X_0} (B_r(x,\varphi(x))) = \omega^{\Phi^{-1}(X_0)}(\Phi^{-1}(B_r(x,\varphi(x)))). 
\end{equation} 
Besides, for each $x \in \overline{B(0, R/2)}$ there exists a constant $M>1$ which only depend on $\|\nabla \varphi\|_{L^\infty(\overline{B(0,R)})}$ 
such that
\[ B_{r/M} (x, 0) \subset \Phi^{-1}(B_r(x,\varphi(x))) \subset B_{Mr}(x,0). \]
Let $\Pi_n: \RR^{n+1} \to \partial \RR^{n+1}_+ \approx \RR^n$ denote the projection onto $\RR^n$. Using \eqref{eq:covhm}, the fact that $\partial\Omega$ is a graph, and the Lebesgue differentiation theorem, we have
\begin{align*}
	& \frac{\omega_\Omega^{X_0}(B_r(x,\varphi(x)))}{ \mathcal{H}^n_{\partial\Omega}(B_r(x,\varphi(x)))} \\
	& = \frac{\omega^{\Phi^{-1}(X_0)}(\Phi^{-1}(B_r(x,\varphi(x)))) }{\int_{\Pi_n(B_r(x,\varphi(x))) } \sqrt{1+|\nabla \varphi(z)|^2} dz } \\
	& = \dfrac{ \fint_{\Phi^{-1}(B_r(x,\varphi(x))) \cap \partial \RR^{n+1}_+ } k(z) dz  }{ \fint_{\Pi_n B_r(x,\varphi(x)) } \sqrt{1+|\nabla \varphi(z)|^2} dz} \cdot \frac{\mathcal{L}^n(\Phi^{-1}(B_r(x,\varphi(x))) \cap \partial \RR^{n+1}_+) }{\mathcal{L}^n(\Pi_n B_r(x,\varphi(x))) } \\
	& \to \frac{k(x)}{\sqrt{1+|\nabla \varphi(x)|^2}} \quad \text{ as } r\to 0+.
\end{align*}
Therefore $\omega_\Omega^{X_0} \ll \mathcal{H}^n_{\partial\Omega}$ and the corresponding Poisson kernel in $\Omega$
\[ k_\Omega(x,\varphi(x)) := \frac{ d\omega_\Omega^{X_0}}{ d\mathcal{H}^n_{\pO}}(x,\varphi(x)) \] satisfies
\[ k_\Omega(x,\varphi(x)) = \frac{k(x)}{ \sqrt{1+|\nabla \varphi(x)|^2}}. \]
Since $\sqrt{1+|\nabla \varphi(x)|^2}$ is continuous and (locally) bounded above and below, it follows that $\log k_\Omega \in VMO_{loc}(\partial\Omega \cap B(0, R/3))$. Therefore we have proven Theorem \ref{thm:squareDini}.

\section*{Declarations}
\textbf{Funding}.
T.T. was partially supported by the Craig McKibben \& Sarah Merner Professor in Mathematics, and by NSF grant DMS-1954545. Z.Z. was partially supported by NSF grant DMS-1902756.

\textbf{Conflict of interest}.
On behalf of all authors, the corresponding author states that there is no conflict of interest.

\textbf{Availability of data and material}. Not applicable.

\textbf{Code availability}. Not applicable.

\bibliographystyle{alpha}
\bibdata{references}
\bibliography{references}

\end{document}